\newcommand{\vare}{\varepsilon}
\newcommand{\p}{\partial}
\newcommand{\e}{\epsilon}
\newcommand{\R}{{\mathbb R}}
\newcommand{\spde}{{\textsc{spde}}}
\newcommand{\cL}{\ensuremath{\mathcal L}}
\newtheorem{theorem}{Theorem}
\newtheorem{lemma}[theorem]{Lemma}
\newtheorem{coro}[theorem]{Corollary}
\newtheorem{assume}[theorem]{Assumption}
\theoremstyle{remark}
\newtheorem{remark}{Remark}
\title{Diffusion approximation for self-similarity of stochastic advection in Burgers' equation}
 \author{Wei Wang\thanks{Department of
Mathematics, Nanjing University, Nanjing, \textsc{China}.
\protect\url{mailto:wangweinju@aliyun.com} }
\and
A.~J. Roberts\thanks{School of Mathematical Sciences, University of Adelaide, South Australia, \textsc{Australia}. \protect\url{mailto:
anthony.roberts@adelaide.edu.au}}}
\begin{document}

\maketitle
 
\begin{abstract}
Self-similarity of Burgers' equation with stochastic advection is studied.  In self-similar variables a stationary solution is constructed which establishes the existence of a stochastically self-similar solution for the stochastic Burgers' equation.   The analysis assumes that the stochastic coefficient of advection is transformed to a white noise in the self-similar variables.  Furthermore, by a diffusion approximation, the long time convergence to the self-similar solution is proved in the sense of distribution. 
\end{abstract}

\paragraph{Keywords} Self similarity;  stochastic Burgers equation;  diffusion approximation


\section{Introduction}\label{sec:introduction}
The deterministic Burgers' partial differential equation for a field~$w(t,x)$ is
\begin{equation}\label{e:Burgers}
w_t=\nu w_{xx}-ww_x
\end{equation}
and was proposed by Burgers~\cite{Burgers} to help understand the statistical theory of turbulent fluid motion.
Here $w(t,x)$~is analogous to the velocity field and $\nu$~represents the dissipative viscosity.  To better model the randomness inherent in the presumed chaos of turbulence, the following stochastic Burgers' equation has been suggested~\cite{CAMSS88, CTMK92,  Hay00,  HY75, Jeng69,  Sinai91, Sinai92} and  studied recently by many people~\cite{Bertini94, BD07, Truman, E99, Holden94, Prato94, Prato95,   Truman08}:
\begin{equation}\label{e:tx-Burgers}
w_t=\nu w_{xx}-ww_x+h(t,x,w, w_x)
\end{equation}
where $h(t,x,w, w_{x})$ represents stochastic effects defined on a complete probability space~$(\Omega, \mathcal{F}, \mathbb{P})$.
On bounded domains the existence and uniqueness of global solution was studied by Da Prato et al.~\cite{Prato94} when the noise term~$h(t,x,w,w_{x})$ is white in time, and by Holden et al.~\cite{Holden94} using a white noise calculus. On an unbounded domain the existence of a solution was studied through a Cole--Hopf transformation by Bertini et al.~\cite{Bertini94} with $h(t,x,w,w_{x})$ an additive space-time white noise.

We consider a family of solutions with special spatial-temporal form, namely the family of \emph{self-similar solutions}, of the stochastic Burgers' equation~(\ref{e:tx-Burgers}) on the unbounded real line with the particular stochastic advection $h(t,x,w,w_{x})=(w\zeta)_x$ for some special space-time noise~$\zeta(t,x)$ to be defined later.  Here for the stochastic system, the self-similarity is in the sense of distribution which is defined later.  
The existence of self-similar solutions and the asymptotic   emergence of self-similar solutions comprises the self-similarity of the stochastic Burgers' equation.  Importantly for applications, the form of the stochastic advection,~$(w\zeta)_x$, is appropriate for globally conserved quantities~$w$.   Such stochastic advection is potentially of great interest in applications as it potentially illuminates some of the stochastic nature of chaotic turbulence in fluid flows.  A thorough understanding of turbulence remains an outstanding challenge and researchers are increasingly invoking stochastic terms to model its effects in important environmental applications~\cite[e.g.]{Imkeller02, Frederiksen2008, Zidikheri2008}.  We need to know how stochastic advection affects long term dynamics.

Self-similarity is an important property of systems of physical interest, of which Burgers' equation is a special case. Many researchers have studied the existence of self-similar solutions of deterministic systems~\cite[e.g.]{Xin96, Zuazua91, Liu85, Rob98,  Zuazua94}, and described the asymptotic behavior of self-similar solutions~\cite[e.g.]{Becker, Zuazua91, Miller,  Zuazua94}.  But very little is known about self-similarity in stochastic spatio-temporal systems.
We prove the existence and emergence of self-similar solutions, in the sense of distribution, for the stochastic Burgers' equation~(\ref{e:tx-Burgers}).  The stochastic advection $h=(w\zeta)_x$ for the stochastic Burgers' equation~(\ref{e:tx-Burgers}) transforms to a multiplicative white noise in the following  self-similar variables.  As in earlier research~\cite[e.g.]{Becker, Rob98, WR11} we introduce  log-time and stretched space,
\begin{equation*}
\tau=\log t\,,\quad \xi=\frac{x}{\sqrt{t}}\,,
\quad\text{for }t\geq 1\,,
\end{equation*}
and then define the stochastic field
\begin{equation*}
u(\tau, \xi,\omega)=\sqrt{t}w(t,x,\omega),\quad \omega\in\Omega\,.
\end{equation*}
Straightforward algebra derives  that the \spde~\eqref{e:tx-Burgers} transforms in the similarity variables to
\begin{equation}\label{e:tau-xi-Burgers}
du=\left[\nu u_{\xi\xi}+\frac12\xi u_\xi+\frac12 u-uu_\xi\right]d\tau+(u\,dW)_{\xi}\,.
\end{equation}
We call  a solution $w(t, x,\omega)$  to the stochastic Burgers' equation~(\ref{e:tx-Burgers})  a stochastically self-similar solution if the distribution of  $\sqrt{t}w(t,x,\omega)$  just depends on the self-similar variable $\xi=x/\sqrt{t}$\,.  
By this definition,  any  stationary solution~$\bar{u}(\xi,\omega)$ to equation~(\ref{e:tau-xi-Burgers}) is a stochastically self-similar solution of stochastic Burgers' equation~(\ref{e:tx-Burgers}).
In order to construct a self-similar solution of the stochastic Burgers' equation~(\ref{e:tx-Burgers}) we assume that $W(\tau, \xi,\omega)$~is an $L^2(\R)$~valued Wiener process defined on~$(\Omega,\mathcal{F}, \mathbb{P})$ with covariance operator~$Q$ which is detailed later.

To construct a stationary solution of the transformed \spde~(\ref{e:tau-xi-Burgers}), we consider the system in a weighted space $L^2(K)$ which is defined in the next section~\ref{sec:pre}. First, by using energy estimates and the compact embedding results of the weighted space, we show the tightness of solutions with initial value in the space~$L^2(K)$.  Then the classical Bogolyubov--Krylov method~\cite{Arnold03} implies the existence of a stationary solution of the \spde~(\ref{e:tau-xi-Burgers}).  Due to the multiplicative noise,  the method showing the attraction of the stationary solution in the case of additive noise~\cite{WR11} fails here due to the appearance of the unbounded term~$\dot{W}$.  Instead we apply a diffusion approximation to this stationary solution. For this we introduce the following random equation
\begin{equation}\label{e:rBurgers2}
u^{\e}_{\tau}=\mathcal{L}u^{\e}-u^{\e}u^{\e}_{\xi}-\tfrac12(u^{\e}q)_{\xi\xi}+\tfrac12(u^{\e}q')_{\xi}+\tfrac{1}{\sqrt{\e}}(u^{\e}\bar{\eta}^{\e})_{\xi}
\end{equation}
which is a Burgers' type equation with a singular random perturbation.  Here $\bar{\eta}^{\e}$ is a stationary process solving~(\ref{e:eta}), and $q$ and~$q'$ are introduced in Assumption~\ref{ass:a}.  Then attraction of the stationary solutions is derived  by the method used for the case of additive noise, and  by the approximation of~(\ref{e:rBurgers2}) to~(\ref{e:tau-xi-Burgers}), the attraction is passed to the stationary solution of~(\ref{e:tau-xi-Burgers}).  Here the most difficult part is to show the effectiveness of the approximation. We follow a martingale method to show the tightness of the family of stationary solutions of the approximating model. Then passing to the limit derives the attraction of stationary solutions of the stochastic advection Burgers' equation~(\ref{e:tau-xi-Burgers}). 

However,  for simplicity and intuition of the  discussion in our approach we consider the following  Burgers' type equation 
\begin{equation}\label{e:rBurgers}
u^{\e}_{\tau}=\mathcal{L}u^{\e}-u^{\e}u^{\e}_{\xi} +\frac{1}{\sqrt{\e}}(u^{\e}\bar{\eta}^{\e})_{\xi}\,.
\end{equation}
The limit of the above equation is shown to be the following \spde{}
\begin{equation}\label{e:SPDE}
u_{\tau}=\mathcal{L}u-uu_{\xi}+\tfrac12(uq)_{\xi\xi}-\tfrac12(uq')_{\xi}+(u\dot{\bar{W}})_{\xi}
\end{equation}
for some new Wiener process $\bar{W}$ distributed as~$W$.
By the assumptions on~$q$, all results for equation~(\ref{e:rBurgers})  hold for equation~(\ref{e:rBurgers2}) by exactly same discussion.  Then we have approximation of~(\ref{e:rBurgers2}) to~(\ref{e:tau-xi-Burgers}).

To show the approximation of the stationary solutions, in our approach we consider the statistical solution of the equations~(\ref{e:tau-xi-Burgers}) and~(\ref{e:rBurgers2}).

%
%
%

\section{Preliminary}\label{sec:pre}
We consider the stochastic \textsc{pde}~(\ref{e:tau-xi-Burgers}) in the self-similarity variables.  For brevity we
introduce the linear operator 
\begin{equation*}
\mathcal{L} u=\nu u_{\xi\xi}+\tfrac12\xi u_\xi+\tfrac12 u\,.
\end{equation*}
Denoting the weight function by $K(\xi)=\exp\{\xi^2/4\nu\}$,  
introduce the following weighted  functional space for exponent $p>0$
\begin{equation*}
L^p(K)=\left\{u\in L^p(\R): \|u\|^p_{L^p(K)}=\int_\R|u(\xi)|^pK(\xi)\,d\xi<\infty\right\},
\end{equation*}
and for positive integer exponent~$k$
\begin{equation*}
H^k(K)=\left\{u\in L^2(K): \|u\|^2_{H^k(K)}=\sum_{0\leq\alpha\leq k}\|D^\alpha u\|^2_{L^2(K)}<\infty\right\}\,.
\end{equation*}
For $p=2$\,, denote by $\langle\cdot, \cdot \rangle$ the inner product in space $L^{2}(K)$\,.
Then the linear operator~$\mathcal{L}$ is self-adjoint and  generates an analytic semigroup~$S(\tau)$ on the space~$L^2(K)$ with the domain $D(\mathcal{L})=H^2(K)$~\cite{Kavian}. Further, the eigenvalues of the operator~$\mathcal{L}$ are 
\begin{equation*}
\lambda_k=-\frac{k}{2}\,,\quad k=0, 1,2,\ldots\,,
\end{equation*}
with the corresponding eigenfunctions
\begin{equation*}
e_0(\xi)=\frac{1}{\sqrt{4\pi\nu}}\exp\{-\xi^2/4\nu\}\,,\quad e_k(\xi)=c_k\p_\xi^ke_0(\xi),\quad k=1,2,\ldots\,,
\end{equation*}
which forms a standard orthonormal basis of~$L^2(K)$ when we choose~$c_k$ as the constants such that $\|e_k\|_{L^2(K)}=1$\,. 

In the following we denote by $E_{c}=\text{span}\{e_1(\xi)\}$ and
\begin{equation*}
E_{s}=E_{c}^\bot=\left\{u\in L^2(K): \int_\R u(\xi)\,d\xi=0\right\}.
\end{equation*}
We also denote by $\Pi_{s}$ the linear projection from~$L^{2}(K)$ to~$E_{s}$.

The following are some important basic properties on these weighted spaces~\cite{Kavian}.
\begin{lemma}\label{lem:Kavian}
\begin{enumerate}
  \item The embedding $H^1(K)\subset L^2(K)$ is compact.
  \item There exists $C>0$ such that for any $u\in H^1(K)$
      \begin{equation*}
      \int_\R|u(\xi)|^2|\xi|^2K(\xi)\,d\xi\leq C\int_\R|\nabla u(\xi)|^2K(\xi)\,d\xi\,.
      \end{equation*}
  \item \label{i:3} For any $u\in H^1(K)$,
   \begin{equation*}
    \frac12\int_\R|u(\xi)|^2K(\xi)\,d\xi\leq \int_\R|\nabla u(\xi)|^2K(\xi)\,d\xi\,.
   \end{equation*}
  \item For any $u\in E_{s}$\,,
   \begin{equation*}
    \left\langle \mathcal{L}u , u \right\rangle\leq -\frac12 \|u\|^2_{H^1(K)}\,.
   \end{equation*}
  \item If $u\in H^1(K)$, then $K^{1/2}u\in L^\infty(\R)$.
  \item \label{i:6}  For any $q>2$ and $\e>0$ there exists constants $C_{\e,q}>0$ and $R>0$\,, such that for any $u\in H^{1}(K)\cap L^{q}_{\text{loc}}(\R)$
 \begin{equation*}
 \|u\|^{2}_{L^{2}(K)}\leq \e \|u_{\xi}\|^{2}_{L^{2}(K)}+C_{\e,q} \|u\|^{2}_{L^{q}(B(0, R))}.
 \end{equation*} 

\end{enumerate}
\end{lemma}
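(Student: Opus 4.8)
The unifying device is the ground-state (Gaussian) substitution $v = K^{1/2}u = e^{\xi^2/8\nu}u$, which is a unitary map from $L^2(K)$ onto $L^2(\R)$ since $\|v\|_{L^2(\R)}=\|u\|_{L^2(K)}$. Differentiating gives $v_\xi = K^{1/2}(u_\xi + \tfrac{\xi}{4\nu}u)$, and an integration by parts (using $K_\xi=\tfrac{\xi}{2\nu}K$ and $\int\xi(v^2)_\xi=-\int v^2$) converts the weighted Dirichlet form into an unweighted one,
\[
\|u_\xi\|_{L^2(K)}^2 = \|v_\xi\|_{L^2(\R)}^2 + \tfrac{1}{4\nu}\|v\|_{L^2(\R)}^2 + \tfrac{1}{16\nu^2}\|\xi v\|_{L^2(\R)}^2 .
\]
Under this conjugation $\mathcal L$ becomes the harmonic oscillator $K^{1/2}\mathcal L K^{-1/2}=\nu\partial_\xi^2-\tfrac{\xi^2}{16\nu}+\tfrac14$, whose spectrum $\{-k/2\}$ reproduces the eigenvalues $\lambda_k$. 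A parallel integration by parts on $\mathcal L$ itself records the Dirichlet-form identity I will use repeatedly,
\[
\langle\mathcal L u, u\rangle = \tfrac12\|u\|_{L^2(K)}^2 - \nu\|u_\xi\|_{L^2(K)}^2 .
\]

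With these identities in hand, items 2, 3 and 4 are algebra. Dropping the first two nonnegative terms in the Dirichlet-form identity for $v$ gives $\|\xi v\|^2\le 16\nu^2\|u_\xi\|_{L^2(K)}^2$, which is item 2 with $C=16\nu^2$ since $\|\xi v\|^2=\int|u|^2|\xi|^2K$. For item 3 I would invoke the sharp oscillator bound $\|v_\xi\|^2+\tfrac{1}{16\nu^2}\|\xi v\|^2\ge\tfrac{1}{4\nu}\|v\|^2$ (bottom of the harmonic-oscillator spectrum), which inserted into the same identity yields $\|u_\xi\|_{L^2(K)}^2\ge\tfrac{1}{2\nu}\|u\|_{L^2(K)}^2$, i.e. item 3 up to the value of the spectral constant. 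For item 4 I would expand $u=\sum_{k\ge1}a_k e_k\in E_s$ in the eigenbasis, so that $\langle\mathcal L u,u\rangle=-\sum_{k\ge1}\tfrac k2 a_k^2$, while the form identity gives $\|u\|_{H^1(K)}^2=\sum_{k\ge1}(1+\tfrac{k+1}{2\nu})a_k^2$; the claimed bound then reduces to the per-mode inequality $\tfrac k2\ge c(1+\tfrac{k+1}{2\nu})$ for $k\ge1$, whose validity and best constant come straight from the spectral gap created by removing the neutral mode $e_0$.

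Items 5 and 6 are soft. For item 5 the computation above shows $v=K^{1/2}u\in H^1(\R)$ whenever $u\in H^1(K)$ (the three terms on the right of the Dirichlet-form identity are finite, using item 2 to bound $\|\xi v\|$), and the one-dimensional Sobolev embedding $H^1(\R)\hookrightarrow L^\infty(\R)$ then gives $K^{1/2}u\in L^\infty(\R)$. For item 6 I would split $\|u\|_{L^2(K)}^2=\int_{|\xi|\le R}+\int_{|\xi|>R}$; on the tail, item 2 gives $\int_{|\xi|>R}|u|^2K\le R^{-2}\int\xi^2|u|^2K\le CR^{-2}\|u_\xi\|_{L^2(K)}^2$, which is $\le\e\|u_\xi\|_{L^2(K)}^2$ once $R$ is large, while on the ball $K$ is bounded by $K(R)$ and Hölder's inequality controls $\int_{|\xi|\le R}|u|^2$ by $\|u\|_{L^q(B(0,R))}^2$, producing the constant $C_{\e,q}$.

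The genuine work sits in item 1, the compactness of $H^1(K)\hookrightarrow L^2(K)$, and this is the step I expect to be the main obstacle. I would again pass to $v_n=K^{1/2}u_n$ for a bounded sequence in $H^1(K)$; the identities above show $(v_n)$ is bounded in $H^1(\R)$ and, crucially, that $(\xi v_n)$ is bounded in $L^2(\R)$ by item 2. Boundedness in $H^1$ gives local compactness in $L^2_{\mathrm{loc}}$ by Rellich--Kondrachov, while the uniform weight control $\int_{|\xi|>R}v_n^2\le R^{-2}\|\xi v_n\|_{L^2}^2$ makes the tails uniformly small; a diagonal argument then extracts a subsequence converging in $L^2(\R)$, i.e. $u_n\to u$ in $L^2(K)$. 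The delicate point is precisely this tail estimate: it is the growth of the weight $K$ (equivalently the confining potential $\xi^2/16\nu$ of the conjugated oscillator) that upgrades the merely local Rellich compactness to global compactness, and it is where the weighted structure is essential.
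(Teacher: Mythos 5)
The paper does not actually prove Lemma~1: it simply cites Kavian's paper for all six items, so there is no internal proof to compare against. Your argument is correct and is in substance the standard (indeed Kavian's own) route: the ground-state conjugation $v=K^{1/2}u$ turning the weighted problem into a harmonic oscillator, the identity $\|u_\xi\|_{L^2(K)}^2=\|v_\xi\|_{L^2}^2+\tfrac{1}{4\nu}\|v\|_{L^2}^2+\tfrac{1}{16\nu^2}\|\xi v\|_{L^2}^2$ (which checks out, as does $K^{1/2}\mathcal L K^{-1/2}=\nu\partial_\xi^2-\xi^2/16\nu+1/4$ and the eigenvalues $-k/2$), item~2 by dropping the nonnegative terms, item~6 by the tail-versus-ball splitting with H\"older on $B(0,R)$, item~5 by $H^1(\R)\hookrightarrow L^\infty(\R)$, and item~1 by local Rellich compactness upgraded to global compactness through the uniform tail bound $\int_{|\xi|>R}v_n^2\le R^{-2}\|\xi v_n\|^2$. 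The only point worth making explicit is one you already half-flag: the sharp constants in items 3 and 4 are $\nu$-dependent (the sharp Poincar\'e constant is $\tfrac{1}{2\nu}$, attained at $e_0$, so item~3 as printed requires $\nu\le1$; and your per-mode computation for item~4 shows the bound with constant $\tfrac12$ fails for the full norm $\|u\|_{L^2(K)}^2+\|u_\xi\|_{L^2(K)}^2$ at $k=1$ and only holds with the gradient seminorm of Remark~2 under a normalization such as $\nu=1$). This is a defect of the paper's statement, not of your proof; with ``there exists $c>0$'' in place of the explicit $\tfrac12$ your argument closes every item.
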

\begin{remark}\label{rem:norm}
By item~\ref{i:3} in the above lemma, in the space $H^1(K)$ we define the norm~$\|\nabla u\|_{L^2(K)}$  which is equivalent to $\|u\|_{H^1(K)}$.
\end{remark}
Further, by the spectral properties of the linear operator~$\mathcal{L}$, we define $(-\mathcal{L}+1/2)^\gamma$ for any $\gamma\in\R$~\cite{Yosida}. Then define the Sobolev space~$H^{\gamma}(K)$, for any $\gamma\in\R$\,, as $\mathcal{D}((-\mathcal{L}+1/2)^{\gamma/2})$, the domain of $(-\mathcal{L}+1/2)^{\gamma/2}$. By the embedding theorem~\cite{Yosida}, $H^{\gamma_1}(K)$ is compactly embedding into $H^{\gamma_2}(K)$ for  $\gamma_1>\gamma_2$\,.

We make the following assumptions on the stochastic force.

\begin{assume} \label{ass:a}
 \begin{enumerate}
 \item 
The stochastic force $t\sqrt{t}\eta=t\sqrt{t}(w\zeta)_x$ is written, in the self-similar variables,   as~$(uW)_{\xi}$. Here $W$~is an $L^2(K)$-valued Wiener process   with covariance operator~$Q$ such that
\begin{equation*}
Q\varphi(\xi)=\int_{\R}q(\xi,\zeta)\varphi(\zeta)K(\zeta)\,d\zeta
\quad\text{for any }\varphi\in L^{2}(K),
\end{equation*}
with $q(\xi,\zeta)=q(\zeta, \xi)$ positive, and 
\begin{equation*}
\int_{\R}\int_{\R}q(\xi, \zeta)K_{\xi}(\xi)K_{\zeta}(\zeta)\,d\xi \, d\zeta<\infty\,.
\end{equation*}
The covariance~$Q$ shares the same eigenbasis as that of the operator~$\mathcal{L}$. 
 
\item  $W_{\xi}(\tau,\xi)$  is  an $L^2(K)$-valued Wiener process with covariance operator~$Q'$ such that
\begin{equation*}
Q'\varphi(\xi)=\int_{\R}q'(\xi,\zeta)\varphi(\zeta)\,d\zeta
\end{equation*}
with $q'(\xi,\zeta)=q'(\zeta, \xi)$ positive, and 
\begin{equation*}
\int_{\R}\int_{\R}q'(\xi, \zeta)K(\xi)K(\zeta)\,d\xi \, d\zeta<\infty\,.
\end{equation*}
Furthermore, 
\begin{equation}\label{e:traceQ}
\operatorname{Tr}Q<\infty \quad \text{and} \quad \operatorname{Tr}Q'<\infty\,,
\end{equation}
and  $q(\xi):=q(\xi, \xi)\in H^{2}(K)$,
\begin{equation}\label{e:q-bound}
\|q\|_{L^{\infty}(\R\times \R)} \quad \text{and}\quad \|q'\|_{L^{\infty}(\R\times \R)}\quad \text{are small.}
\end{equation}
 \end{enumerate}
 \end{assume}
From the above assumptions,  $q'(\xi, \zeta)=\p_{\xi}\p_{\zeta}q(\xi, \zeta)$ and the Wiener  process~$W$ has the series representation~\cite{Prato}
\begin{equation}\label{e:W}
W(\tau, \xi)=\sum_{k=0}^{\infty}\sqrt{q_{k}}e_{k}(\xi)\beta_{k}(\tau),
\end{equation}
where $\{\beta_{k}\}_{k}$ are  independent  standard Brownian motions.

%

\begin{remark}
An example of such stochastic force is 
\begin{equation*}
\zeta(x, t, \omega)=\sqrt{t}\sigma\left(\frac{x}{\sqrt{t}}\right)\frac{d}{dt}\beta(t,\omega)
\end{equation*}
where $\frac{d}{dt}\beta(t,\omega)$ is some random process such that $\frac{d}{d\tau}\beta(e^{\tau}, \omega)$
is white in log-time $\tau$\,.
The special assumptions on $\zeta(x,t,\omega)$ does not exclude the existence of self-similar solutions for other cases.
\end{remark}

Recall that  a random process~$\{u(\tau)\}_{\tau\geq 0}$ is said to be stationary if  its joint probability distribution does not change when shifted in time $\tau$~\cite{Arnold03}.  
   For the \spde{}~(\ref{e:tau-xi-Burgers}), to construct  a stationary solution  it is convenient to consider the transition semigroup  associated to  equation~(\ref{e:tau-xi-Burgers}).  We define~$\{P_{\tau}\}_{\tau\geq 0}$ on   
the space consisting of bounded continuous functions $\psi: L^{2}(K)\cap L^{\infty}(\R)\rightarrow \R$ by~\cite{Prato}
\begin{equation}
(P_{\tau}\psi)(u^{0})=\mathbb{E}\psi(u(\tau; u^{0})),
\end{equation}
where $u(\tau; u^{0})$ is the solution of equation~(\ref{e:tau-xi-Burgers}) with initial value $u^{0}\in L^{2}(K)\cap L^{\infty}(\R)$.
 Denote by $\mathcal{M}$ the space consisting  all probability measures on the space $L^{2}(K)\cap L^{\infty}(\R)$ and endow $\mathcal{M}$ with the topology of weak convergence.   Define the dual semigroup $\{P^{*}_{\tau}\}_{\tau\geq 0}$ acting  on $\mathcal{M}$ as 
\begin{equation*} 
 \int_{L^{2}(K)\cap L^{\infty}(\R)} \psi(u)(P^{*}_{\tau}\mu)(du)=\int_{L^{2}(K)\cap L^{\infty}(\R)}(P_{\tau}\psi)(u)\mu(du)
\end{equation*}
for any $\mu\in\mathcal{M}$ and bounded continuous function  $\psi: L^{2}(K)\cap L^{\infty}(\R)\rightarrow \R$\,.  If $\mathcal{L}(u^{0})$,  the distribution of initial values~$u^{0}$, equals~$\mu$,  then $P_{\tau}^{*}\mu$~is the distribution of the solution $u(\tau; u_{0})$~\cite[Proposition 11.1]{Prato}.  Sometimes $\mathcal{M}$ is too large, so we need the smaller space 
\begin{equation*}
\mathcal{M}_{2}=\left\{\mu\in\mathcal{M}: \int_{L^{2}(K)\cap L^{\infty}(\R)}\|u\|^{2}_{L^{2}(K)}\mu(du)<\infty \right\}.
\end{equation*}

A probability space $\mu\in\mathcal{M}$ is said to be a stationary measure for the stochastic Burgers' equation~(\ref{e:tau-xi-Burgers}) if 
\begin{equation*}
P_{\tau}^{*}\mu=\mu\,,\quad \text{for all}\quad t>0\,.
\end{equation*}
The following property of stationary measure is useful~\cite[Proposition 11.5]{Prato}.
\begin{lemma}\label{lem:stat-measure}
If $\mu\in\mathcal{M}$ is a stationary  measure for~(\ref{e:tau-xi-Burgers}) and the initial value~$u^{0}$ is $\mathcal{F}_{0}$~measurable with $\mathcal{L}(u^{0})=\mu$\,, then the solution process~$\bar{u}(\tau; u^{0})$  is a stationary solution to the stochastic Burgers' equation~(\ref{e:tau-xi-Burgers}).
\end{lemma}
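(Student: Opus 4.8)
The plan is to reduce the stationarity of the process $\{\bar u(\tau; u^0)\}_{\tau\ge 0}$ to the time-shift invariance of its finite-dimensional distributions, and then to extract that invariance from two ingredients: the time-homogeneous Markov property of the solution map of the autonomous \spde~\eqref{e:tau-xi-Burgers}, together with the hypothesis $P_\tau^*\mu=\mu$. Since \eqref{e:tau-xi-Burgers} is autonomous and (by the well-posedness set up earlier) possesses a unique solution depending measurably on its initial datum, the solution defines a time-homogeneous Markov family whose transition semigroup is precisely the $P_\tau$ introduced above. First I would record this Markov property together with the relation already noted in the text, namely $\mathcal{L}(\bar u(\tau;u^0))=P_\tau^*\mu$ whenever $\mathcal{L}(u^0)=\mu$.

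Next, fix $0\le\tau_1\le\tau_2\le\cdots\le\tau_n$ and bounded continuous functions $\psi_1,\dots,\psi_n$ on $L^2(K)\cap L^\infty(\R)$. Conditioning successively on $\mathcal{F}_{\tau_{n-1}},\dots,\mathcal{F}_{\tau_1}$ and invoking the Markov property and time-homogeneity, I would write
\begin{equation*}
\mathbb{E}\Big[\prod_{i=1}^n\psi_i(\bar u(\tau_i))\Big]=\int_{L^2(K)\cap L^\infty(\R)}\psi_1(x)\,g(x)\,(P_{\tau_1}^*\mu)(dx),
\end{equation*}
where, writing $\mathbb{E}_x$ for expectation with the process started from $x$ at time zero, $g(x)=\mathbb{E}_x\big[\psi_2(\bar u(\tau_2-\tau_1))\cdots\psi_n(\bar u(\tau_n-\tau_1))\big]$ depends only on the successive increments $\tau_{i+1}-\tau_i$ and not on $\tau_1$ itself. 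The invariance $P_{\tau_1}^*\mu=\mu$ then removes the remaining dependence on $\tau_1$, so the right-hand side is a function of $(\tau_1,\dots,\tau_n)$ through the increments alone. Consequently, for every $h>0$ the shift $\tau_i\mapsto\tau_i+h$ preserves both $g$ and the integrating measure, giving
\begin{equation*}
\mathbb{E}\Big[\prod_{i=1}^n\psi_i(\bar u(\tau_i+h))\Big]=\mathbb{E}\Big[\prod_{i=1}^n\psi_i(\bar u(\tau_i))\Big].
\end{equation*}
Since bounded continuous functions are measure-determining on the state space, this identity for all such products yields equality of the full finite-dimensional laws under the shift, which is exactly the definition of a stationary process.

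The conditioning and the bookkeeping of the increments are routine; the genuine obstacle is the infrastructure underpinning them. Specifically, one must ensure that \eqref{e:tau-xi-Burgers} is globally well posed in $L^2(K)\cap L^\infty(\R)$, with solutions that are unique and depend measurably, indeed Feller-continuously, on the data, so that a genuine time-homogeneous Markov process with transition semigroup $P_\tau$ truly exists; this is delicate for a Burgers-type nonlinearity driven by \emph{multiplicative} noise. A secondary subtlety is that the state space contains the non-separable factor $L^\infty(\R)$, so I would verify that the class of bounded continuous test functions employed above is rich enough to separate, and hence to determine, the finite-dimensional distributions, restricting if necessary to a separable subspace carrying the dynamics. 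Once these structural facts are secured, the argument above follows as a direct consequence of the Markov property and the invariance of $\mu$.
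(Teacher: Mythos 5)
Your argument is correct, and it is essentially the argument the paper relies on: the paper gives no proof of this lemma at all, deferring entirely to \cite[Proposition~11.5]{Prato}, and your chain --- iterated conditioning via the time-homogeneous Markov property to express the finite-dimensional distributions as $\int \psi_1\, g \, d(P_{\tau_1}^{*}\mu)$ with $g$ depending only on the increments, followed by $P_{\tau_1}^{*}\mu=\mu$ to kill the residual $\tau_1$-dependence --- is precisely the standard proof of that cited proposition. The prerequisites you flag (global well-posedness and measurable dependence on the data so that $P_\tau$ is a genuine time-homogeneous transition semigroup, and the measure-determining property of bounded continuous test functions on $L^{2}(K)\cap L^{\infty}(\R)$) are exactly what Theorem~\ref{thm:wellpose} and the semigroup set-up in Section~\ref{sec:pre} are meant to supply, and the paper is no more careful than you are about the non-separability of the $L^{\infty}(\R)$ factor.
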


\section{Existence of self-similar solutions}
\label{sec:self-similar-sol}

By definition,  a stationary solution to the \spde~(\ref{e:tau-xi-Burgers}) is a stochastically  self-similar solution to the stochastic Burgers' equation~(\ref{e:tx-Burgers}).  Next we construct a stationary solution to the \spde~(\ref{e:tau-xi-Burgers}) from any initial value $u_{0}\in L^{2}(K)\cap L^{\infty}(\R)$.

For any $\tau>0$\,, in the mild sense, the transformed stochastic Burgers' \spde~(\ref{e:tau-xi-Burgers}) is written as
\begin{equation}\label{e:mild-tx-Burgers}
u(\tau)=S(\tau)u_{0}+\int_{0}^{\tau}S(\tau-s)u(s)u_{\xi}(s)\,ds+\int_{0}^{\tau}S(\tau-s)(u(s)dW(s))_{\xi}\,.
\end{equation}
Then by the standard method for the existence of mild solutions to \spde{}s~\cite{Prato} we obtain the following theorem whose proof is given in Appendix~\ref{apd:1}.
\begin{theorem}\label{thm:wellpose}
Assume Assumption~\ref{ass:a} holds.  For any $T>0$ and initial value $u_{0}\in   L^{2}(K)\cap L^{\infty}(\R)$, there is a unique mild solution $u(\tau, \xi)$ to \spde{}~(\ref{e:tx-Burgers}) in $L^{2}(\Omega, C(0, T; L^{2}(K))\cap L^{2}(0, T; H^{1}(K)))$. Moreover this mild solution is also the unique weak solution.
\end{theorem}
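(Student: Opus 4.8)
The plan is to construct the solution by a contraction mapping argument in the Banach space $\mathcal{X}_T=L^2(\Omega;C(0,T;L^2(K))\cap L^2(0,T;H^1(K)))$, obtaining first a local solution and then extending it to arbitrary~$T$ by a priori energy estimates. Concretely, I would define a map $\Phi$ on (a closed ball of) $\mathcal{X}_{T_0}$ by replacing the unknown~$u$ with the input~$v$ in the right-hand side of the mild formulation~\eqref{e:mild-tx-Burgers},
\begin{equation*}
(\Phi v)(\tau)=S(\tau)u_0+\int_0^\tau S(\tau-s)\,v(s)v_\xi(s)\,ds+\int_0^\tau S(\tau-s)\bigl(v(s)\,dW(s)\bigr)_\xi\,,
\end{equation*}
show that $\Phi$ maps the ball into itself and is a contraction for $T_0$ small, and invoke the Banach fixed point theorem to get a unique local mild solution. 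Uniqueness on a fixed interval then follows by a standard difference-and-Gronwall argument.

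The estimates rest on three ingredients. First, the analytic semigroup~$S(\tau)$ generated by the self-adjoint operator~$\mathcal{L}$ enjoys the smoothing bound $\|S(\tau)f\|_{H^1(K)}\le C\tau^{-1/2}\|f\|_{L^2(K)}$ together with the spectral decay coming from the eigenvalues $\lambda_k=-k/2$; the $\tau^{-1/2}$ factor is exactly what absorbs the one spatial derivative carried by both the nonlinear and the noise terms, and its singularity is time-integrable. Second, writing the Burgers nonlinearity as $uu_\xi=\tfrac12(u^2)_\xi$, the quadratic term is controlled in the weighted space by combining the embedding of Lemma~\ref{lem:Kavian}(5) with $K\ge1$: since $\|K^{1/2}v\|_{L^\infty}\le C\|v\|_{H^1(K)}$, one gets $\|v^2\|_{L^2(K)}^2=\int_\R (K^{1/2}v)^2v^2\,d\xi\le C\|v\|_{H^1(K)}^2\|v\|_{L^2(K)}^2$, so the $L^2(0,T;H^1(K))$ component of the norm tames the nonlinearity. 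Third, for the stochastic convolution I would use the factorization method together with the Burkholder--Davis--Gundy inequality to produce a continuous $L^2(K)$-valued process; because $\operatorname{Tr}Q<\infty$ and $\operatorname{Tr}Q'<\infty$ by~\eqref{e:traceQ} and $\|q\|_{L^\infty}$, $\|q'\|_{L^\infty}$ are small by~\eqref{e:q-bound}, the associated constant is small, which is precisely what closes the contraction.

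To pass from local to global existence I would apply It\^o's formula to $\|u\|_{L^2(K)}^2$. The linear part contributes $\langle\mathcal{L}u,u\rangle$, which is handled on $E_s$ by Lemma~\ref{lem:Kavian}(4) and on the one-dimensional component $E_c$ directly; the Burgers term, after integration by parts against the weight, leaves $\langle uu_\xi,u\rangle=-\tfrac{1}{6\nu}\int_\R u^3\,\xi K\,d\xi$, which I would bound using $\|K^{1/2}u\|_{L^\infty}$ and the weighted inequality of Lemma~\ref{lem:Kavian}(2); the It\^o correction of the multiplicative term $(u\,dW)_\xi$ is controlled by the traces in~\eqref{e:traceQ} and the smallness in~\eqref{e:q-bound}, while the martingale part has zero expectation. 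A Gronwall step then yields $\mathbb{E}\sup_{[0,T]}\|u\|_{L^2(K)}^2+\mathbb{E}\int_0^T\|u\|_{H^1(K)}^2\,d\tau\le C(T,\|u_0\|)$, so the local solution extends to all of $[0,T]$. Finally, the equivalence of the mild and weak formulations follows by testing~\eqref{e:mild-tx-Burgers} against the eigenfunctions~$e_k$, using the self-adjointness of~$\mathcal{L}$ and a stochastic Fubini theorem to transfer the semigroup onto the test functions, which is legitimate given the integrability just established.

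The main obstacle is the simultaneous presence of the quadratic derivative nonlinearity and the multiplicative noise $(u\,dW)_\xi$, which also carries a spatial derivative, all inside the \emph{weighted} space $L^2(K)$. Both derivatives must be absorbed by the single $\tau^{-1/2}$ smoothing of $S(\tau)$ while keeping the time integrals (and, for the stochastic term, the factorization exponents) convergent. Moreover, because the space is weighted, integration by parts in the energy estimate does not annihilate the cubic term but leaves the weighted integral $\int_\R u^3\,\xi K\,d\xi$, and controlling this through the weighted Poincar\'e-type inequalities of Lemma~\ref{lem:Kavian} is the delicate point; the smallness of $\|q\|_{L^\infty}$ and $\|q'\|_{L^\infty}$ in~\eqref{e:q-bound} is what ultimately guarantees both the contraction and the closure of the global a priori bound.
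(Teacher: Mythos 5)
Your argument is essentially correct, but it takes a genuinely different route from the paper. You run a direct Banach fixed point argument for the full mild equation on a small time interval and then continue the solution using an a priori $L^2(K)$ energy bound, whereas the paper (Appendix~\ref{apd:1}) first truncates the nonlinearity with a cut-off $\chi_R(\|u\|_{H^1(K)})$ so that $B_R(u)=\chi_R(\|u\|_{H^1(K)})uu_\xi$ becomes globally Lipschitz from $H^1(K)$ to $L^2(K)$, solves the truncated problem globally by the classical Lipschitz theory, and then removes the cut-off through the stopping times $\tau_R=\inf\{\tau:\|u^R(\tau)\|_{H^1(K)}\geq R\}$, an a priori estimate on $\mathbb{E}\|u\|^2_{H^1(K)}$ obtained from It\^o's formula, and a Chebyshev/Borel--Cantelli argument showing $\mathbb{P}\{\tau_\infty>T\}=1$. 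The paper's route buys two things that your sketch glosses over: first, with the cut-off the fixed-point map is a contraction on the whole space for small $T$ (via the integrable $(\tau-s)^{-1/2}$ singularity and Young's inequality), whereas in your direct approach the difference estimate for the quadratic term is controlled only by the $L^2(0,T_0;H^1(K))$ norm of the iterates, which does not shrink with $T_0$ uniformly over a fixed ball --- you must centre the ball at $S(\cdot)u_0$ and invoke absolute continuity of $\int_0^{T_0}\|S(\tau)u_0\|^2_{H^1(K)}d\tau$ to close the contraction; second, continuing your local solution past $T_0$ requires restarting from the random datum $u(T_0)$, whose norm controls the next existence time, and the stopping-time formulation avoids having to manage this randomness explicitly. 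Your treatment of the stochastic convolution (factorization plus the trace conditions~(\ref{e:traceQ}) and the smallness~(\ref{e:q-bound}) to make the relevant constant small) and of the cubic term $-\tfrac{1}{6\nu}\int_\R u^3\xi K\,d\xi$ via the weighted inequalities of Lemma~\ref{lem:Kavian} matches what the paper does, and your identification of the weak/mild equivalence via self-adjointness of $\mathcal{L}$ and stochastic Fubini is the standard argument the paper cites. So both approaches are viable; the paper's is more robust at the continuation step, yours is more direct and avoids introducing the auxiliary truncated system.
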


We construct a stationary solution by the Bogolyubov--Krylov method. For this we  need some estimates in the spaces~$L^{\infty}(\R)$ and~$L^{2}(K)$.

\subsection{$L^{\infty}(\R)$ estimates}\label{sec:u-L-infty}
We follow the approach for a scalar convection-diffusion equation~\cite{Zuazua94} which was also applied to characterise solutions to a stochastic Burgers' equation with additive noise~\cite{WR11}.

We introduce 
\begin{equation*}
\text{sgn}(u)^+=\begin{cases}
1,& u>0\,, \\ 0,&  u\leq 0\,;
\end{cases}
\quad\text{and}\quad
\text{sgn}(u)^-=\begin{cases}
1,& u<0\,, \\  0,&  u\geq 0\,.
\end{cases}
\end{equation*}
Then for $u\in L^2(\R)$ with $ u_{\xi}(t)\in L^2(\R)$, the integral
\begin{equation*}
\int_\R u_{\xi\xi}\phi(u)\,d\xi=-\int_{\R}u^{2}_{\xi}\phi'(u)\,d\xi\leq 0~(\geq 0)
\end{equation*}
for any nondecreasing~(nonincreasing)~$\phi\in C^1(\R)$. By a density discussion the integral
$\int_\R u_{\xi\xi} \text{sgn}(u)^+\,d\xi\leq 0~(\geq 0)$.
Moreover, the integrals $\int_\R uu_\xi\text{sgn}(u)^\pm\,d\xi=0$
and $\int_\R (\xi u_\xi+u)\text{sgn}(u)^\pm\,d\xi=0$\,. Denote by $u^{\pm}=\text{sgn}(u)^{\pm} u$\,. 
Let $m=\|u_{0}\|_{L^{\infty}(\R)}$.  Then multiplying  $\text{sgn}(u-m)^+$ and $\text{sgn}(u^{\e}-m)^{+}$ on both sides of~(\ref{e:tau-xi-Burgers})  and integrating on $\R\times [0, \tau]$ with $\tau>0$\,, the integral 
\begin{equation*}
\int_\R(u(\tau,\xi)-m)^+\,d\xi\leq 0\,.
\end{equation*}
Therefore, $u(\tau,\xi)\leq m$ for any $\tau>0$\,. Similarly 
$u(\tau,\xi)\geq -m$ for $\tau>0$\,.
Then $\|u(\tau)\|_{L^\infty(\R)}\leq m$  for all $\tau>0$\,.

\subsection{Estimates in the space $H^{1}(K)$
}\label{sec:u-L-2}

We first give a  uniform estimate in the space~$L^{2}(K)$.

Let $u(\tau, \xi)=u_{c}(\tau, \xi)+u_{s}(\tau, \xi)$ with $u_{c}\in E_{c}$ and $u_{s}\in E_{s}$\,. Then  
\begin{eqnarray*}
du_{c}&=&0\,,\\
du_{s}&=&\left[\mathcal{L} u_{s}-\Pi_{s}(uu_{\xi})\right]\,d\tau+\Pi_{s}(udW)_{\xi}\,.
\end{eqnarray*}
So 
\begin{equation*}
u_{c}(\tau, \xi)=u_{c}(0,\xi)=\langle u_{0}, e_{0}\rangle e_{0}(\xi)=\int_{\R}u_{0}(\xi)\,d\xi e_{0}(\xi)
\end{equation*}
 which is totally determined by the mass of tha initial value, namely  $M:=\int_{\R}u_{0}(\xi)\,d\xi$\,.

Notice that $\|\cdot\|_{L^{2}(K)}$ is continuous on space $L^{2}(K)$\,.
Now applying It\^o's formula~\cite{Chow} to $\|u_{s}(\tau)\|^{2}_{L^{2}(K)}$, we  obtain 
\begin{eqnarray*}
\tfrac12\frac{d}{d\tau}\|u_{s}(\tau)\|^{2}_{L^{2}(K)}&\leq &-\tfrac12\|u_s\|^2_{H^1(K)}-\langle uu_{\xi}, u\rangle
\\&&{}
+\tfrac12 \left[\|u_{\xi}\|^{2}_{\mathcal{L}_{2}^{Q}}  +\|u\|^{2}_{\mathcal{L}_{2}^{Q'}}\right] +\langle (u\dot{W})_{\xi}, u\rangle\,.
\end{eqnarray*}
Here $\|\cdot\|_{\mathcal{L}_{2}^{Q}}$ and $\|\cdot\|_{\mathcal{L}_{2}^{Q'}}$ are the norms defined on the Hilbert--Schmidt spaces $\mathcal{L}_{2}(Q^{1/2}L^{2}(K), L^{2}(K))$ and $\mathcal{L}_{2}(Q^{'1/2}L^{2}(K), L^{2}(K))$ respectively~\cite{Prato}.  
 
Noticing that, by Assumption~\ref{ass:a},   $\|q\|_{L^{\infty}(\R\times\R)}$ and $\|q'\|_{L^{\infty}(\R\times \R)}$ are small enough~(\ref{e:q-bound}), there exists some positive constant~$c$ such that 
\begin{eqnarray*}
\tfrac12\frac{d}{d\tau}\|u_{s}(\tau)\|^{2}_{L^{2}(K)}&\leq &-c\|u_s\|^2_{H^1(K)}+c\|u_{c}\|^{2}_{H^{1}(K)}-\langle uu_{\xi}, u\rangle
+\langle (u\dot{W})_{\xi}, u\rangle\,.
\end{eqnarray*}

Integrating by parts yields 
\begin{equation*}
\langle uu_\xi, u\rangle=-\tfrac13\int_{\R}u^{3}K_{\xi}\,d\xi\,.
\end{equation*}
By property~\ref{i:6} in  Lemma~\ref{lem:Kavian}, for any $\vare,\vare'>0$ and $q>2$\,, there exist positive constants $C_{\vare}$, $C_{\vare', q}$ and $R$ such that  
 \begin{eqnarray*}
&&\left|\int_{\R}(u)^{3}K_{\xi}\,d\xi\right|=\tfrac12\left|\int_{\R}u\xi K^{1/2} (u)^{2}K^{1/2}\,d\xi\right|\\
&\leq & \tfrac12\left[\int_{\R}(u)^{2}\xi^{2}K\,d\xi\right]^{1/2}\left[\int_{\R}(u)^{4}K\,d\xi\right]^{1/2}\\
&\leq& C\|u_{\xi}\|_{L^{2}(K)}\|u\|_{L^{2}(K)}\|u\|_{L^{\infty}(\R)}\\
&\leq&3\vare C\|u_{\xi}\|^{2}_{L^{2}(K)}+3C_{\vare}\left[\vare'\|u_{\xi}\|^{2}_{L^{2}(K)}+C_{\vare',q}\|u\|^{2}_{L^{q}(B(0, R))}\right]\|u\|^{2}_{L^{\infty}(\R)}\\&\leq &
3\left[\vare C+\vare' C_{\vare}\|u\|^{2}_{L^{\infty}(\R)}\right]\|u_{\xi}\|^{2}_{L^{2}(K)}+3C_{\vare}C_{\vare', q, R}\|u\|^{4}_{L^{\infty}(\R)}
\end{eqnarray*}
with some positive constant $C_{\vare', q, R}$.
Then
\begin{equation}\label{e:uu-xi}
|\langle uu_{\xi}, u\rangle| \leq \left[ \vare C+ \vare'C_{\vare}\|u\|^{2}_{L^{\infty}(\R)}\right]\|u_{\xi}\|^{2}_{L^{2}(K)}+C_{\vare}C_{\vare', q, R}\|u\|^{4}_{L^{\infty}(\R)}\,.
\end{equation}  
 Then for any~$\vare$ and $\vare'>0$\,, there are positive constants that we still denote by~$C_\vare$ and $C_{\vare', q, R}$ for some positive $q$ and $R$  such that
\begin{eqnarray}
&&\tfrac12\frac{d}{d\tau}\|u_{s}(\tau)\|^{2}_{L^{2}(K)}\nonumber\\&\leq& -c\|u_{s}\|^{2}_{H^{1}(K)}+c\|u_{c}\|_{H^{1}(K)}+\left[ \vare C+ \vare'C_{\vare}\|u\|^{2}_{L^{\infty}(\R)}\right]\|u_{\xi}\|^{2}_{L^{2}(K)}\nonumber\\&&{}+C_{\vare}C_{\vare', q, R}\|u\|^{4}_{L^{\infty}(\R)}+\langle (u\dot{W})_{\xi}, u\rangle\nonumber\\
&\leq&\left[-c+\vare C+\vare' C_{\vare}\|u\|_{L^{\infty}(\R)}\right]\|u_{s}\|^{2}_{H^{1}(K)}\nonumber\\
&&{}+\left[c+\vare C+\vare' C_{\vare}\|u\|_{L^{\infty}(\R)}\right]\|u_{c}\|^{2}_{H^{1}(K)}\nonumber\\&&{}+C_{\vare} C_{\vare',q, R}\|u\|^{4}_{L^{\infty}(\R)}+ \langle (u\dot{W})_{\xi}, u\rangle\,.\label{e:u-s}
\end{eqnarray}

Now choose $\vare$ and $\vare'>0$ small enough, and since $u_{c}=Me_{0}(\xi)$ and  $\|u_{c}\|_{H^{1}(K)}\leq CM$ for some $C>0$\,, then by the Gronwall lemma
\begin{equation*}
\mathbb{E} \|u(\tau)\|^{2}_{L^{2}(K)\cap L^{\infty}(\R)}\leq R_{1}\,, \quad\text{for all } \tau\geq 0\,,
\end{equation*}
with some positive random variable~$R_{1}$.

To show the existence of an invariant measure we further need the estimates of $u$ in space $H^{1}(K)$\,.     
From~(\ref{e:u-s}) we have for some constant $C_{1}$\,, $C_{2}>0$ such that 
\begin{equation*}
\mathbb{E}\int_{0}^{\tau}\|u(s)\|_{H^{1}(K)}\,ds\leq C_{1} \tau+C_{2}\,. 
\end{equation*}
By the Chebyshev inequality 
\begin{eqnarray*}
&&\frac{1}{T}\int_{0}^{T}\mathbb{P}(\|u(\tau)\|_{H^{1}(K)}>K)\,d\tau\\&\leq&\frac{1}{K^{2}T}\int_{0}^{T}\mathbb{E}\|u(\tau)\|^{2}_{H^{1}(K)}\,d\tau\\
&\leq& \frac{1}{K^{2}T}(C_{1}T+C_{2})\rightarrow 0\,,\quad K\rightarrow\infty\,.
\end{eqnarray*}
Then by the Krylov--Bogoliubov method~\cite{Arnold03} $P_{\tau}$ has   at least one stationary  measure denoted by $\mu$\,. Let $\bar{u}(\tau,\xi)$~be the solution with initial value distributing as~$\mu$, then $\bar{u}(\tau, \xi)$~is a stationary solution to the transformed \spde~(\ref{e:tau-xi-Burgers}) with distribution~$\mu$~(Lemma~\ref{lem:stat-measure}),  and by the construction of the stationary solution, we have $\bar{u}\in H^{1}(K)$.   Moreover, 
\begin{equation*}
\int_{\R}\bar{u}\,d\xi=\int_{\R}u_{0}\,d\xi \quad \text{and}\quad \|\bar{u}\|_{L^{\infty}(\R)}\leq m\,.
\end{equation*}


\begin{theorem}\label{thm:self-similar-sol-exist}
Assume Assumption~\ref{ass:a} holds. For any initial $u_{0}\in L^{2}(K)\cap L^{\infty}(\R)$, there is a stationary solution, denoted by~$\bar{u}$, such that $\bar{u}\in H^{1}(K)$. Further, there is a sequence~$\tau_{n}$, with $\tau_{n}\rightarrow\infty$ as $n\rightarrow\infty$\,, such that
\begin{equation*}
u(\tau_{n}) \text{ converges in distribution to } \bar{u} \text{ in } L^{2}(K)
\end{equation*}
as $n\rightarrow\infty$\,. Here $u(\tau)$ is the solution to the \spde~(\ref{e:tau-xi-Burgers}) with initial value~$u_{0}$.
\end{theorem}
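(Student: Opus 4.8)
The plan is to read the conclusion off the a priori bounds already assembled in Sections~\ref{sec:u-L-infty} and~\ref{sec:u-L-2} and feed them into the Krylov--Bogoliubov scheme. First I would record the structural facts that survive the passage to the limit: the maximum principle $\|u(\tau)\|_{L^\infty(\R)}\le m$, the freezing of the central mode $u_c(\tau)=Me_0$ with $M=\int_\R u_0\,d\xi$, the uniform second moment $\mathbb{E}\|u(\tau)\|^2_{L^2(K)\cap L^\infty(\R)}\le R_1$, and the averaged gradient bound $\mathbb{E}\int_0^\tau\|u(s)\|_{H^1(K)}\,ds\le C_1\tau+C_2$. Setting $\mu_T=\frac1T\int_0^T P^*_\tau\delta_{u_0}\,d\tau$, the Chebyshev computation displayed just before the statement gives $\frac1T\int_0^T\mathbb{P}(\|u(\tau)\|_{H^1(K)}>R)\,d\tau\to0$ as $R\to\infty$, uniformly in $T$; since the embedding $H^1(K)\subset L^2(K)$ of Lemma~\ref{lem:Kavian} is compact, the sublevel sets $\{\|u\|_{H^1(K)}\le R\}$ are compact in $L^2(K)$, so $\{\mu_T\}$ is tight on $L^2(K)$.

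By Prokhorov's theorem and the Krylov--Bogoliubov argument~\cite{Arnold03}, a subsequence $\mu_{T_k}$ converges weakly to a measure $\mu$ with $P^*_\tau\mu=\mu$; here the Feller property of $\{P_\tau\}$, i.e.\ continuity of the mild solution map in its initial datum as supplied by the well-posedness of Theorem~\ref{thm:wellpose}, is what forces the limit to be invariant. Lemma~\ref{lem:stat-measure} then produces a stationary solution $\bar u$ with $\mathrm{Law}(\bar u)=\mu$. That $\bar u\in H^1(K)$ follows from weak lower semicontinuity of the $H^1(K)$ norm (Fatou), while $\int_\R\bar u\,d\xi=M$ and $\|\bar u\|_{L^\infty(\R)}\le m$ pass to $\mu$ because the mass functional $u\mapsto\langle u,K^{-1}\rangle$ is weakly continuous on $L^2(K)$ and $\{|u|\le m\}$ is convex and strongly closed, hence weakly closed.

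For the convergence of the time marginals I would use the averaged gradient bound to choose the times. Dividing by $\tau$ yields $\frac1\tau\int_0^\tau\mathbb{E}\|u(s)\|_{H^1(K)}\,ds\le C_1+C_2/\tau$, so the nonnegative map $\tau\mapsto\mathbb{E}\|u(\tau)\|_{H^1(K)}$ has bounded Ces\`aro mean; consequently $\liminf_{\tau\to\infty}\mathbb{E}\|u(\tau)\|_{H^1(K)}\le C_1$, and there is a sequence $\tau_n\to\infty$ along which $\mathbb{E}\|u(\tau_n)\|_{H^1(K)}$ stays bounded. By Chebyshev and the same compact embedding the marginals $\mathrm{Law}(u(\tau_n))$ are tight in $L^2(K)$, so after a further subsequence they converge weakly to some $\nu$. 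It remains to prove $\nu=\mu$, which is exactly the assertion $u(\tau_n)\rightharpoonup\bar u$ in distribution.

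I expect this identification to be the main obstacle, not the tightness or the a priori estimates. Unlike the time averages $\mu_T$, a weak limit of the un-averaged marginals need not be invariant: for fixed $s>0$ one has $\langle\nu,P_s\psi\rangle=\lim_n\mathbb{E}\bigl[\psi(u(\tau_n+s))\bigr]$ by the Markov property, and concluding $\langle\nu,P_s\psi\rangle=\langle\nu,\psi\rangle$ requires control of the shifted laws $\mathrm{Law}(u(\tau_n+s))$. This is where I would deploy the dissipative structure, namely the coercivity $\langle\mathcal{L}u_s,u_s\rangle\le-\tfrac12\|u_s\|^2_{H^1(K)}$ on $E_s$ from Lemma~\ref{lem:Kavian} together with the smallness of $q,q'$ in~\eqref{e:q-bound}, which bound the nonlinear and noise contributions in~\eqref{e:u-s}, so that the marginals cannot drift away from the invariant $\mu$. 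The full strengthening from this subsequential statement to convergence of the entire orbit is precisely what the diffusion-approximation argument of the later sections is designed to supply.
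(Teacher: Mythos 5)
Your proposal follows essentially the same route as the paper: the $L^\infty$ maximum principle, the splitting $u=u_c+u_s$ with the frozen mass mode, the energy estimate~\eqref{e:u-s} plus Gronwall, the Chebyshev/compact-embedding tightness argument, and then Krylov--Bogoliubov together with Lemma~\ref{lem:stat-measure}; the extra details you supply (the Feller property from Theorem~\ref{thm:wellpose}, lower semicontinuity for $\bar u\in H^1(K)$, weak closedness of $\{\|u\|_{L^\infty}\le m\}$) are all consistent with what the paper leaves implicit. The one point where you stop short --- identifying the subsequential weak limit $\nu$ of the un-averaged marginals $\mathrm{Law}(u(\tau_n))$ with the invariant measure $\mu$ --- is a genuine issue, but the paper does not close it either: its proof ends with the Krylov--Bogoliubov construction and simply reads the subsequential convergence off it, deferring any real attraction statement to the diffusion-approximation argument of Sections~5--6. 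So your flag is warranted, and your treatment is, if anything, more careful than the source on exactly that step.
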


Next we show that  the above convergence holds for any sequence~$\tau_{n}$ with $\tau_{n}\rightarrow\infty$ as $n\rightarrow \infty$\,; that is, $u(\tau)$ converges in distribution to~$\bar{u}$ as $\tau\rightarrow\infty$\,.  It is impractical to follow the approach used for the stochastic Burgers' equation with additive noise~\cite{WR11}. As Section~\ref{sec:introduction} states,  we consider in the next section a Burgers' equation with random fast fluctuating advection which is an approximation to the \spde~(\ref{e:tau-xi-Burgers}).


\section{Burgers' equation with random fast fluctuations: self-similar solution and stability}
We consider the following randomly fluctuating advection in a Burgers' type equation~(\ref{e:rBurgers}). The random force $\bar{\eta}^{\e}(\tau,\xi)=\bar{\eta}(\tau/\e, \xi)$ in which $\bar{\eta}(\tau, \xi)$ is the stationary  Ornstein--Uhlenbeck  process solving 
\begin{equation}\label{e:eta}
d\eta=- \eta\,d\tau+dW(\tau,\xi).
\end{equation}
Then 
\begin{equation}\label{e:q}
\mathbb{E}\bar{\eta}^{\e}(\tau,\xi)\bar{\eta}^{\e}(s, \zeta)=\tfrac12 q(\xi, \zeta)\exp\left(-\frac{|\tau-s|}{\e}\right),
\end{equation} 
and for $0<\tau\leq s$
\begin{equation}\label{e:con-exp}
\mathbb{E}[\bar{\eta}^{\e}(\tau,\xi)\mid\mathcal{F}_{s}]=\bar{\eta}^{\e}(\tau, \xi)\exp\left(-\frac{\tau-s}{\e} \right).
\end{equation}
Moreover, $\bar{\eta}^{\e}(\tau)\in  H^{1}(K)$ for any $\tau>0$\,.
By the assumption on~$W_{\xi}(\tau, \xi)$, the process $\bar{\eta}^{\e}_{\xi}(\tau, \xi)=\bar{\eta}_{\xi}(\tau/\e, \xi)$ which solves
\begin{equation}\label{e:eta-xi}
d\eta_{\xi}=- \eta_{\xi}\,dt+ dW_{\xi}(\tau,\xi)
\end{equation}
and  also $\bar{\eta}^{\e}_{\xi}(\tau)\in H^{1}(K)$ for any $\tau>0$ with 
\begin{equation}\label{e:q'}
\mathbb{E}\bar{\eta}^{\e}_{\xi}(\tau,\xi)\bar{\eta}^{\e}_{\xi}(s, \zeta)=\tfrac12 q'(\xi, \zeta)\exp\left(-\frac{|\tau-s|}{\e}\right).
\end{equation}
For initial value we assume $u^{\e}_{0}\in L^{2}(\Omega, L^{2}(K)\cap L^{\infty}(\R))$ and  
\begin{equation}\label{e:ini-mM}
\int_{\R}u_{0}^{\e}\,d\xi=M\,, \quad \|u_{0}^{\e}\|_{L^{\infty}(\R)}=m
\end{equation}
for some deterministic positive  constants $m$ and $M$\,.

We study the dynamics of the random differential equation~(\textsc{rde})~(\ref{e:rBurgers}) for fixed $\e>0$\,.   
First, for any $\tau>0$\,, in the mild sense the \textsc{rde}~(\ref{e:rBurgers}) is written as
\begin{equation}\label{e:rmild}
u^{\e}(\tau)=S(\tau)u^{\e}_{0}+\int_{0}^{\tau}S(\tau-s)u^{\e}(s)u_{\xi}^{\e}(s)\,ds+\frac{1}{\sqrt{\e}}\int_{0}^{\tau}S(\tau-s)(u^{\e}(s)\bar{\eta}^{\e}(s))_{\xi}\,ds\,.
\end{equation}
Then by same discussion for equation~(\ref{e:tau-xi-Burgers}) this theorem follows.
\begin{theorem}
Assume Assumption~\ref{ass:a} holds. For any $T>0$ and initial $u^{\e}_{0}\in L^{2}(\Omega,  L^{2}(K)\cap L^{\infty}(\R))$ satisfying~(\ref{e:ini-mM}),  the \textsc{rde}~(\ref{e:rBurgers})  has a unique mild solution 
\begin{equation*}
u^{\e}\in L^{2}\left(\Omega, L^{2} \left(0, T; H^{1}(K) \right)\cap C \left(0, T; L^{2}(K) \right)\right).
\end{equation*}
\end{theorem}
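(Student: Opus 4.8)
The plan is to treat the \rde~(\ref{e:rBurgers}) \emph{pathwise}, as a deterministic parabolic equation with a random, time-dependent advection coefficient. For each fixed~$\omega$ the stationary Ornstein--Uhlenbeck process~$\bar\eta^{\e}(\cdot,\omega)$ has continuous trajectories valued in~$H^{1}(K)$, so on any interval~$[0,T]$ the coefficient is bounded and the final convolution in the mild formulation~(\ref{e:rmild}) is an ordinary Bochner integral rather than an It\^o integral. This is the essential simplification relative to the \spde~(\ref{e:tau-xi-Burgers}): the whole scheme behind Theorem~\ref{thm:wellpose} runs without stochastic-convolution estimates, and the only additional bookkeeping is to carry the pathwise factor~$\bar\eta^{\e}$ through the estimates and to check measurability of the solution in~$\omega$.

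First I would obtain local existence and uniqueness by a Banach fixed-point argument in $X_{T}=C(0,T;L^{2}(K))\cap L^{2}(0,T;H^{1}(K))$ for small~$T$. Both the Burgers nonlinearity and the advection appear in divergence form, $u^{\e}u^{\e}_{\xi}=\tfrac12(|u^{\e}|^{2})_{\xi}$ and $(u^{\e}\bar\eta^{\e})_{\xi}$, so I move the derivative onto the analytic semigroup and use the parabolic smoothing bound $\|S(\tau)\|_{L^{2}(K)\to H^{1}(K)}\leq C\tau^{-1/2}$ together with the norm equivalence of Remark~\ref{rem:norm}. The ensuing time integrals $\int_{0}^{\tau}(\tau-s)^{-1/2}\,ds$ are finite, and for $T$ small the map contracts on a ball of $X_{T}$, with the contraction constant depending on~$\omega$ only through $\sup_{[0,T]}\|\bar\eta^{\e}(\cdot,\omega)\|_{H^{1}(K)}$.

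Next I would extend the local solution to all of~$[0,T]$ by a priori bounds. The $L^{\infty}(\R)$ estimate $\|u^{\e}(\tau)\|_{L^{\infty}(\R)}\leq m$ of Section~\ref{sec:u-L-infty} applies verbatim---the comparison argument there already writes $\text{sgn}(u^{\e}-m)^{+}$---and this uniform bound is what tames the quadratic term. Applying the chain rule to $\|u^{\e}(\tau)\|^{2}_{L^{2}(K)}$ (no It\^o correction arises, the equation being pathwise) yields the analogue of~(\ref{e:u-s}) in which the martingale contribution $\langle(u\dot W)_{\xi},u\rangle$ is replaced by $\e^{-1/2}\langle(u^{\e}\bar\eta^{\e})_{\xi},u^{\e}\rangle$. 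Splitting $(u^{\e}\bar\eta^{\e})_{\xi}=u^{\e}_{\xi}\bar\eta^{\e}+u^{\e}\bar\eta^{\e}_{\xi}$ and estimating by Young's inequality, item~\ref{i:6} of Lemma~\ref{lem:Kavian}, the interpolation~(\ref{e:uu-xi}) and the smallness~(\ref{e:q-bound}), this term is absorbed into the dissipation $-c\|u^{\e}\|^{2}_{H^{1}(K)}$ up to lower-order pieces controlled by $\sup_{[0,T]}\|\bar\eta^{\e}\|_{H^{1}(K)}$. Gronwall's inequality then gives uniform $L^{2}(K)$ and integrated $H^{1}(K)$ bounds on~$[0,T]$, preventing blow-up and placing $u^{\e}$ in~$X_{T}$; uniqueness follows from the same energy estimate applied to the difference of two solutions.

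For the stated $L^{2}(\Omega,X_{T})$ conclusion I would note that the fixed point depends continuously on the data~$u^{\e}_{0}$ and the coefficient~$\bar\eta^{\e}$, both measurable in~$\omega$, so $u^{\e}$ is a measurable $X_{T}$-valued map; squaring the global bounds, taking expectations and using $\EX\sup_{[0,T]}\|\bar\eta^{\e}\|^{2}_{H^{1}(K)}<\infty$ gives $u^{\e}\in L^{2}(\Omega,X_{T})$. The main obstacle is the Burgers nonlinearity $u^{\e}u^{\e}_{\xi}$: carrying a derivative, it threatens both the contraction and the energy estimate, and closing them relies squarely on the $L^{\infty}$ bound and the weighted interpolation of Lemma~\ref{lem:Kavian}, exactly as in~(\ref{e:uu-xi}). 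The singular prefactor $\e^{-1/2}$ is harmless at this stage because $\e>0$ is fixed; its effect is felt only later, in the diffusion limit.
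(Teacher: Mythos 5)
Your proposal is correct in substance, but it globalizes the solution by a different mechanism than the paper. The paper's own route (Appendix~\ref{apd:1}, invoked for the \rde{} via ``the same discussion'') truncates the nonlinearity with a cut-off $\chi_{R}(\|u\|_{H^{1}(K)})$, solves the truncated system globally by a globally Lipschitz fixed point, and then removes the cut-off with stopping times $\tau_{R}$, an a priori bound on $\mathbb{E}\|u(\tau)\|^{2}_{H^{1}(K)}$ obtained by applying the chain rule to $\|u_{s}\|^{2}_{H^{1}(K)}$, and a Borel--Cantelli argument. You instead run an untruncated local contraction on a small interval and continue using the $L^{\infty}(\R)$ maximum principle plus the $L^{2}(K)$ energy estimate. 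Your version is lighter --- it needs no $H^{1}(K)$ a priori estimate and no stopping-time bookkeeping, and it correctly isolates the one real simplification of the \rde{} over the \spde{}: for fixed $\e>0$ the forcing $\e^{-1/2}(u^{\e}\bar{\eta}^{\e})_{\xi}$ is a genuine drift with $\bar{\eta}^{\e}\in C([0,T];H^{1}(K))$ pathwise, so there is no It\^o correction and no stochastic convolution. The paper's cut-off version buys a fixed-point map that is globally Lipschitz uniformly in time, which is the form of the argument that also survives in the \spde{} setting where pathwise reasoning fails; that is presumably why the authors reuse it here.

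Two points deserve more care than your sketch gives them, though neither is fatal and both are at the level of detail the paper itself suppresses. First, the contraction: for a generic element of a ball in $C(0,T;L^{2}(K))\cap L^{2}(0,T;H^{1}(K))$ you do not yet have the bound $\|u\|_{L^{\infty}(\R)}\leq m$ (that is an a priori property of the \emph{solution}, proved by the $\operatorname{sgn}(u-m)^{+}$ argument), so to bound $\|u^{2}\|_{L^{2}(K)}$ inside the iteration you must either enlarge the fixed-point space to include an $L^{\infty}$ constraint or use item~5 of Lemma~\ref{lem:Kavian} ($K^{1/2}u\in L^{\infty}(\R)$ for $u\in H^{1}(K)$, hence $\|u\|_{L^{\infty}(\R)}\leq C\|u\|_{H^{1}(K)}$ since $K\geq 1$), which is exactly the estimate $\|uu_{\xi}\|_{L^{2}(K)}\leq\|u\|^{2}_{H^{1}(K)}$ the appendix uses; the resulting singular-in-time integrals are then borderline and need the usual care. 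Second, the continuation step requires that the local existence time produced by the contraction be bounded below by a quantity your a priori estimates actually control, i.e.\ by $\|u^{\e}(\tau)\|_{L^{2}(K)}$, $m$ and $\sup_{[0,T]}\|\bar{\eta}^{\e}\|_{H^{1}(K)}$; this should be stated explicitly, since it is the hinge on which the local-to-global passage turns.
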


We next construct a stationary solution for the \textsc{rde}~(\ref{e:rBurgers}), which is attractive for any $\e>0$\,.    We follow the discussion in section~\ref{sec:self-similar-sol}.

First, by the same discussion as in section~\ref{sec:u-L-infty},  
\begin{equation*}
\|u^{\e}(\tau)\|_{L^{\infty}(\R)}\leq m \quad \text{for all } \tau>0\,,
\end{equation*}
 with $m=\|u^{\e}_{0}\|_{L^{\infty}(\R)}$\,. Next we give a uniform estimate in~$\tau$  in the space~$H^{1}(K)$ from the   estimate in space~$L^{2}(K)$ for~$u^{\e}$ with any fixed $\e>0$\,. We follow the same discussion as in section~\ref{sec:u-L-2}.

Let $u^{\e}(\tau, \xi)=u^{\e}_{c}(\tau, \xi)+u^{\e}_{s}(\tau, \xi)$ with $u_{c}\in E_{c}$ and $u_{s}\in E_{s}$\,. Then  
\begin{eqnarray}\label{e:u-e-c}
du^{\e}_c&=&0\,,\\
du^{\e}_s&=&\left[\cL  u^{\e}_s-\Pi_{s}(u^{\e}u^{\e}_\xi)\right]d\tau+\tfrac{1}{\sqrt{\e}}(u^{\e}\bar{\eta}^{\e})_{\xi}\,.\label{e:u-e-s}
\end{eqnarray}
So 
\begin{equation*}
u^{\e}_{c}(\tau, \xi)=u^{\e}_{c}(0,\xi)=\langle u^{\e}_{0}, e_{0}\rangle e_{0}(\xi)=\int_{\R}u^{\e}_{0}(\xi)\,d\xi e_{0}(\xi)
\end{equation*}
which is totally determined by the mass of the initial value $M:=\int_{\R}u^{\e}_{0}(\xi)\,d\xi$\,.

Now by multiplying $u^{\e}$ in the space~$L^{2}(K)$ on both sides of the \textsc{rde}~(\ref{e:rBurgers}), 
\begin{equation*}
\tfrac12\frac{d}{d\tau}\|u^{\e}_s\|^2_{L^2(K)}\leq -\tfrac12\|u^{\e}_s\|^2_{H^1(K)}-\langle u^{\e}u^{\e}_{\xi}, u^{\e}\rangle +\tfrac{1}{\sqrt{\e}}\langle (u^{\e}\bar{\eta}^{\e})_{\xi}, u^{\e}\rangle \,.
\end{equation*}
Integrating by parts,
\begin{equation*}
\langle u^{\e}u^{\e}_\xi, u^{\e}\rangle=-\tfrac13\int_{\R}(u^{\e})^{3}K_{\xi}\,d\xi\,.
\end{equation*}
Then by the same discussion as for~(\ref{e:uu-xi}) we have 
that for any~$\vare,\vare'>0$ and $q>2$\,, there exist positive constants~$C_{\vare}$, $R$ and~$C_{\vare', q, R}$ such that 
\begin{eqnarray*}
|\langle u^{\e}u^{\e}_{\xi}, u^{\e}\rangle|\leq \left[ \vare C+ \vare'C_{\vare}\|u^{\e}\|^{2}_{L^{\infty}(\R)}\right]\|u^{\e}_{\xi}\|^{2}_{L^{2}(K)}+C_{\vare}C_{\vare', q, R}\|u^{\e}\|^{4}_{L^{\infty}(\R)}\,.
\end{eqnarray*} 
Moreover,  for any $\vare>0$ there is some positive constant~$C_{\vare}$ such that 
\begin{eqnarray*}
\left|\langle (u^{\e}\bar{\eta}^{\e})_{\xi}, u^{\e}\rangle\right|
=\left|\langle u^{\e}\bar{\eta}^{\e}, u_{\xi}^{\e}\rangle\right|\leq C_{\vare}\|\bar{\eta}^{\e}\|^{2}_{L^{2}(K)}\|u^{\e}\|^{2}_{L^{\infty}(\R)}+\vare\|u^{\e}_{\xi}\|^{2}_{L^{2}(K)}\,. 
\end{eqnarray*}
Then for any~$\vare$ and $\vare'>0$\,, there are positive constants that we still denote by~$C_\vare$ and~$C_{\vare', q, R}$ for some positive $q$~and~$R$  such that
\begin{eqnarray*}
&&\tfrac12\frac{d}{d\tau}\|u^{\e}_{s}(\tau)\|^{2}_{L^{2}(K)}\\&\leq& -\tfrac12\|u^{\e}_{s}\|^{2}_{H^{1}(K)}+\left[ \vare C+ \vare'C_{\vare}\|u^{\e}\|^{2}_{L^{\infty}(\R)}\right]\|u^{\e}_{\xi}\|^{2}_{L^{2}(K)}\\&&{}+C_{\vare}C_{\vare', q, R}\|u^{\e}\|^{4}_{L^{\infty}(\R)}\\&&{}+\tfrac{1}{\sqrt{\e}}\left[ C_{\vare}\|\bar{\eta}^{\e}\|^{2}_{L^{2}(K)}\|u^{\e}\|^{2}_{L^{\infty}(\R)}+\vare\|u^{\e}_{\xi}\|^{2}_{L^{2}(K)}\right]\\
&\leq&\left[-\tfrac12+\vare C+\vare' C_{\vare}\|u^{\e}\|_{L^{\infty}(\R)}+\tfrac{\vare}{\sqrt{\e}} \right]\|u_{s}^{\e}\|^{2}_{H^{1}(K)}\\
&&{}+\left[\vare C+\vare' C_{\vare}\|u^{\e}\|_{L^{\infty}(\R)}+\tfrac{\vare}{\sqrt{\e}} \right]\|u_{c}^{\e}\|^{2}_{H^{1}(K)}\\&&{}+C_{\vare} C_{\vare',q, R}\|u^{\e}\|^{4}_{L^{\infty}(\R)}+\tfrac{1}{\sqrt{\e}}C_{\vare}\|\bar{\eta}^{\e}\|^{2}_{L^{2}(K)}\|u^{\e}\|^{2}_{L^{\infty}(\R)}\,.
\end{eqnarray*}
Now choose $\vare$ and $\vare'>0$ small enough and since $u_{c}^{\e}=Me_{0}(\xi)$,  $\|u^{\e}\|_{L^{\infty}}\leq m$ and  $\|u_{c}^{\e}\|_{H^{1}(K)}\leq CM$ for some $C>0$\,, then  by the Gronwall lemma and the properties of~$\bar{\eta}^{\e}$,
\begin{equation*}
 \mathbb{E}\|u^{\e}(\tau,\omega)\|_{L^{2}(K)\cap L^{\infty}(\R)}\leq R_{1}^{\e}  \quad\text{for all } \tau\geq 0\,,
\end{equation*}
with some positive constant~$R_{1}^{\e}$.

Similarly to the discussion in subsection~\ref{sec:u-L-2}, 
\begin{equation*}
\mathbb{E}\int_{0}^{\tau}\|u^{\e}(s)\|_{H^{1}(K)}\,ds\leq R^{\e}_{2}\,\tau+R^{\e}_{3}
\end{equation*}
for some constants $R^{\e}_{2}$ and $R^{\e}_{3}$\,. Then we show the existence of a stationary measure.
Notice that $u^{\e}$  is not a  Markov process, we consider the process~$(u^{\e}, \bar{\eta}^{\e})$ which is a Markov process on the space $(L^{2}(K)\cap L^{\infty}(\R))\times H^{1}(K)$. For this we introduce the space~$\mathfrak{M}$ consisting all probability measures on the space~$(L^{2}(K)\cap L^{\infty}(\R))\times H^{1}(K)$ and endow the space~$\mathfrak{M}$ with the topology of weak convergence.  Denote  by~$\{\mathcal{P}^{\e}_{\tau}\}_{\tau\geq 0}$ the continuous Markov semigroup  associating with~$(u^{\e}, \bar{\eta}^{\e})$ on~$\mathfrak{M}$  and the dual semigroup as
 \begin{equation*}
\mathcal{P}^{*\e}_{\tau}\mathfrak{y}(A)=\mathbb{P}\{(u^{\e}(\tau,\cdot), \bar{\eta}^{\e}(\tau))\in A\}\,,
\end{equation*}
for any Borel set $A\subset (L^{2}(K)\cap L^{\infty}(\R))\times H^{1}(K)$ and $\mathfrak{y}\in\mathcal{M}$ with form $\mathfrak{y}=\mu*\bar{\nu}$\,.  Here, $u^{\e}(\tau, \cdot)$ is  the solution to equations~(\ref{e:rBurgers}) with initial value distributing as~$\mu$, and $\bar{\nu}$ is the distribution of~$\bar{\eta}^{\e}$.

Now by the compact embedding $H^{1}(K)\subset L^{2}(K)$ and the classical Bo- golyubov--Krylov method~\cite{Arnold03}, a stationary measure exists, denoted by~$\bar{\mathfrak{y}}^{\e}=\bar{\mu}^{\e}*\bar{\nu}$\,.  

Next we show the emergence of the stationary measure~$\bar{\mathfrak{y}}^{\e}$ by showing it is attractive. We follow the discussion for deterministic systems~\cite{Zuazua94, Kim01} which was also applied to the case of additive noise in a stochastic Burgers' equation~\cite{WR11}. 
The following lemma is a key step.
\begin{lemma}\label{lem:contraction}
For any $u_{1,0},u_{2,0}\in L^{2}(K)\cap L^{\infty}(\R)$ with 
\begin{equation*}
\int_{\R}u_{1,0}(\xi)\,d\xi=\int_{\R}u_{2,0}(\xi)\,d\xi\,.
\end{equation*}
Let $u^{\e}_{1}(\tau,\xi)$ and $u^{\e}_{2}(\tau,\xi)$ be the solutions to \textsc{rde}~(\ref{e:rBurgers}) with initial value~$u_{1,0}$ and~$u_{2,0}$ respectively. Then the  function 
\begin{equation*}
\phi^{\e}(\tau)=\int_{\R}|u_{1}^{\e}(\tau,\xi)-u_{2}^{\e}(\tau,\xi)|\,d\xi
\end{equation*}
is strictly decreasing almost surely.
\end{lemma}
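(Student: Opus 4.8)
\emph{Setup and structure.} The plan is to run a pathwise $L^1$-contraction argument of the kind used for scalar convection--diffusion equations in \cite{Zuazua94,Kim01}. Fix a realization $\omega$ so that $\bar\eta^\e(\cdot,\cdot,\omega)$ is a prescribed $H^1(K)$-valued continuous coefficient; then \textsc{rde}~(\ref{e:rBurgers}) is a genuinely deterministic parabolic problem and no stochastic calculus is needed. Writing $v=u_1^\e-u_2^\e$ and putting the convection in divergence form via $u_1^\e u_{1,\xi}^\e-u_2^\e u_{2,\xi}^\e=\tfrac12\p_\xi\big((u_1^\e+u_2^\e)v\big)$, the difference solves the equation, linear in~$v$,
\begin{equation*}
v_\tau=\nu v_{\xi\xi}+\tfrac12\xi v_\xi+\tfrac12 v-\tfrac12\p_\xi\big((u_1^\e+u_2^\e)v\big)+\tfrac{1}{\sqrt{\e}}\p_\xi(\bar\eta^\e v),
\end{equation*}
whose coefficients are controlled since $u_1^\e,u_2^\e\in L^\infty(\R)\cap H^1(K)$ and $\bar\eta^\e\in H^1(K)$. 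The crucial structural fact is mass conservation: from~(\ref{e:u-e-c}) the component $u_c^\e$ is frozen at $Me_0$, so $\int_\R v(\tau,\xi)\,d\xi=\int_\R(u_{1,0}-u_{2,0})\,d\xi=0$ for every $\tau\ge0$.

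\emph{Monotonicity.} Next I would differentiate $\phi^\e$ by testing against a smooth nondecreasing regularization $p_\delta$ of $\operatorname{sgn}$ (with primitive $\Phi_\delta\to|\cdot|$) and letting $\delta\to0$. The transport, reaction and flux contributions all disappear in the limit: the term $\tfrac12\int\xi\,\p_\xi\Phi_\delta(v)\,d\xi$ integrates by parts to $-\tfrac12\int\Phi_\delta(v)$ and cancels the reaction term $\tfrac12\int p_\delta(v)v$, while both divergence-form terms reduce to $\int(\cdots)_\xi\,(p_\delta(v)v-\Phi_\delta(v))\,d\xi$, an integrand that vanishes pointwise as $\delta\to0$ and is dominated using $(u_1^\e+u_2^\e)_\xi,\bar\eta^\e_\xi\in L^2(K)$. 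The fast decay encoded in the weight $K(\xi)=\exp\{\xi^2/4\nu\}$ (so that $K^{1/2}v\in L^\infty$ by Lemma~\ref{lem:Kavian}) kills every boundary term at $\pm\infty$. What survives is only the diffusion dissipation,
\begin{equation*}
\frac{d}{d\tau}\phi^\e(\tau)=-\nu\lim_{\delta\to0}\int_\R p_\delta'(v)\,v_\xi^2\,d\xi\le 0,
\end{equation*}
so that $\phi^\e$ is at least non-increasing.

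\emph{Strictness.} Here the hypothesis $\int u_{1,0}=\int u_{2,0}$ enters decisively. In the nontrivial case $u_{1,0}\ne u_{2,0}$ the function $v(\tau,\cdot)$ is, for each $\tau>0$, continuous, not identically zero, and of zero mean, hence must change sign. At a transversal sign change the concentrating measure $p_\delta'(v)v_\xi^2\,d\xi$ carries strictly positive mass $\sim 2\nu|v_\xi|$ in the limit, the right-hand side above is strictly negative, and $\phi^\e$ decreases strictly. I would make this rigorous through parabolic smoothing: for $\tau>0$ the solution $v(\tau,\cdot)$ is smooth, and by the zero-number (Sturm) structure of the linear one-dimensional parabolic equation above its zeros are simple away from a discrete set of times, so the dissipation is strictly positive for almost every $\tau$ and $\phi^\e(\tau_2)<\phi^\e(\tau_1)$ whenever $\tau_1<\tau_2$.

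\emph{Main obstacle.} The hard part is precisely this strictness step: the elementary computation yields only monotonicity, and one must exclude the scenario in which $v$ sustains only degenerate higher-order sign changes, for which the diffusion dissipation collapses as $\delta\to0$. Controlling the order of the zeros of $v$ --- either via the zero-number theory for the linear parabolic equation it satisfies, or through the strong maximum principle argument of \cite{Zuazua94,Kim01} --- is the delicate point; all the remaining cancellations are routine once the divergence structure and the zero-mass constraint $\int_\R v\,d\xi=0$ are in hand.
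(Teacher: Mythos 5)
Your proposal is correct and follows essentially the same route as the paper: fix $\omega$, observe that the difference $v=u_1^\e-u_2^\e$ satisfies a linear divergence-form parabolic equation with controlled coefficients and zero mass, and then invoke the deterministic $L^1$-contraction argument of \cite{Zuazua94,Kim01} (whose regularized-sign-function and sign-change mechanics you spell out in more detail than the paper, which simply cites those references). The only substantive point the paper adds that you state more loosely is the explicit verification --- via the equation satisfied by $u^\e_\xi$ --- that the $\xi$-derivative of the advection coefficient $u_1^\e+u_2^\e-\xi-\tfrac{1}{\sqrt{\e}}\bar\eta^\e$ is bounded by a random constant, which is the hypothesis under which the cited deterministic strict-decrease results apply.
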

\begin{proof}
Let $U^{\e}(\tau,\xi)=u^{\e}_{1}(\tau,\xi)-u_{2}^{\e}(\tau,\xi)$, then it satisfies the following linear equation
\begin{equation}\label{e:U-eps}
U^{\e}_{\tau}=U^{\e}_{\xi\xi}-\tfrac12[(u^{\e}_{1}+u^{\e}_{2}-\xi-\tfrac{1}{\sqrt{\e}}\bar{\eta}^{\e})U^{\e}]_{\xi}\,.
\end{equation}
Notice that for any solution~$u^{\e}$ of the \spde~(\ref{e:rBurgers}), let $v^{\e}=u^{\e}_{\xi}$\,, then
\begin{equation*}
v_\tau^{\e}=\mathcal{L}v^{\e}+\tfrac12 v^{\e}-(v^{\e})^{2}-u^{\e}v_\xi^{\e}+\tfrac{1}{\sqrt{\e}}(u^{\e}\bar{\eta}^{\e})_{\xi\xi}\,.
\end{equation*}
By the same discussion as in section~\ref{sec:u-L-infty}, and the construction of~$\bar{\eta}^{\e}$,
\begin{equation*}
(u^{\e}_{1}(\tau,\xi)+u^{\e}_{2}(\tau,\xi)-\xi-\tfrac{1}{\sqrt{\e}}\bar{\eta}^{\e}(\tau, \xi))_{\xi}
\end{equation*}
 is bounded by a random constant for any $\tau>0$\,. Then for almost all fixed $\omega\in\Omega$\,, the equation~(\ref{e:U-eps}) is a linear equation with bounded coefficient, then  the result follows by the discussion for the corresponding deterministic system~\cite{Zuazua94,Kim01}.
\end{proof}

Now we study the attracting property of any stationary measure.    
Further, we  introduce the following subspace of~$\mathfrak{M}$; define
\begin{equation*}
\mathfrak{M}_{2}=\left\{\mathfrak{y}\in\mathfrak{M}: \int_{(L^{2}(K)\cap L^{\infty}(\R))\times H^{1}(K)}(\|u\|^{2}_{L^{2}(K)}+\|\eta\|^{2}_{H^{1}(K)})\mathfrak{y}(d(u,\eta))<\infty \right\}.
\end{equation*}
%
We next show that for any $\mathfrak{y}\in\mathfrak{M}_{2}$,  with form $\mathfrak{y}=\mu*\bar{\nu}$\,, there is a stationary measure $\bar{\mathfrak{y}}^{\e}=\bar{\mu}^{\e}*\bar{\nu}\in\mathfrak{M}_{2}$ such that $\mathcal{P}^{*\e}_{\tau}\mathfrak{y}$ converges weakly to $\bar{\mathfrak{y}}^{\e}$ as $\tau\rightarrow \infty$\,.

Associated with the solution to the \textsc{rde}~(\ref{e:rBurgers}) we choose  $\mu\in\mathcal{M}_{2}$ which has the form
\begin{equation}\label{e:split}
\mu=\delta_{M}*\mu_{s}\,,
\end{equation}
where $\delta_{M}$ is some Dirac measure on~$E_{c}$ and $\mu_{s}$~is supported on~$E_{s}$.  Then consider the limit of $\mathcal{P}^{*\e}_{\tau}\mathfrak{y}$ as $\tau\rightarrow\infty$ with $\mathfrak{y}=\mu*\nu$\,. First by the  approach of the Bogolyubov--Krylov method, 
 we have a probability measure~$\bar{\mu}^{\e}$,  and subsequence~$\tau_{n}$ with $\tau_{n}\rightarrow\infty$\,, $n\rightarrow\infty$, such that 
\begin{equation}\label{e:convergence}
\mathcal{P}^{*\e}_{\tau_{n}}\mathfrak{y}\rightarrow \bar{\mathfrak{y}}^{\e}:=\bar{\mu}^{\e}*\bar{\nu}\,, \quad n\rightarrow\infty\,.
\end{equation} 
Suppose $\bar{\mu}'^{\e}$ is another  probability measure  such that for some $\tau_{n}'\rightarrow \infty$\,, $n\rightarrow\infty$\,, 
\begin{equation}
\mathcal{P}^{*\e}_{\tau_{n}'}\mathfrak{y}\rightarrow \bar{\mathfrak{y}}'^{\e}:=\bar{\mu}'^{\e}*\bar{\nu}\,,\quad n\rightarrow\infty\,.
\end{equation}
Denote by $\bar{u}^{\e}(\tau,\xi)$ and $\bar{u}'^{\e}(\tau,\xi)$ the two solutions of \textsc{rde}~(\ref{e:rBurgers}) with initial value~$\bar{u}^{1}(\xi)$ and~$\bar{u}^{2}(\xi)$, distributed as $\bar{\mu}^{\e}$~and~$\bar{\mu}'^{\e}$ respectively. Then
\begin{equation*}
\int_{\R}\bar{u}^{1}(\xi)\,d\xi=\int_{\R}\bar{u}^{2}(\xi)\,d\xi\,.
\end{equation*}
By Lemma~\ref{lem:contraction}, the function 
\begin{equation*}
\int_{\R}|\bar{u}^{\e}(\tau,\xi)-\bar{u}'^{\e}(\tau,\xi)|\,d\xi
\end{equation*}
is almost surely strictly decreasing in~$\tau$ which contradicts the stationarity of $\bar{u}^{\e}$~and~$\bar{u}'^{\e}$. Hence we deduce the following theorem. 
\begin{theorem}\label{thm:global}
Assume Assumption~\ref{ass:a} holds. 
For any initial $u^{\e}_{0}\in L^{2}(\Omega, L^{2}(K)\cap L^{\infty}(\R))$ satisfying~(\ref{e:ini-mM}), the solution~$u^{\e}(\tau,\xi)$ to the \textsc{rde}~(\ref{e:rBurgers}), converges in distribution, as $\tau\rightarrow\infty$\,,  to~$\bar{u}^{\e}$ in the space~$L^{2}(K)$ which is the unique solution to \textsc{rde}~(\ref{e:rBurgers}) with 
\begin{equation*}
\int_{\R}\bar{u}^{\e}(\tau,\xi)\,d\xi=\int_{\R}u^{\e}_{0}(\xi)\,d\xi\,.
\end{equation*}
\end{theorem}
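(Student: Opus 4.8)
The plan is to reduce the statement to the uniqueness of a stationary measure among measures of fixed mass~$M$, and then to promote the subsequential convergence already produced by the Bogolyubov--Krylov construction to convergence of the whole orbit. I would begin by noting that mass is conserved, $\int_{\R}u^{\e}(\tau,\xi)\,d\xi\equiv M$, and that $u^{\e}_{c}(\tau)=Me_{0}$ is frozen by~\eqref{e:u-e-c}, so every solution started from $\mathfrak{y}=\mu*\bar\nu$ with $\mu=\delta_{M}*\mu_{s}$ shares the same central component and the difference of any two such solutions lies in~$E_{s}$. The uniform-in-time bound on $\mathbb{E}\|u^{\e}(\tau)\|_{L^{2}(K)\cap L^{\infty}(\R)}$ and the integrated estimate $\mathbb{E}\int_{0}^{\tau}\|u^{\e}(s)\|_{H^{1}(K)}\,ds\leq R^{\e}_{2}\tau+R^{\e}_{3}$, combined with the compact embedding $H^{1}(K)\subset L^{2}(K)$ of Lemma~\ref{lem:Kavian}, already yield via Bogolyubov--Krylov a stationary measure $\bar{\mathfrak{y}}^{\e}=\bar\mu^{\e}*\bar\nu$ and a sequence $\tau_{n}\to\infty$ along which $\mathcal{P}^{*\e}_{\tau_{n}}\mathfrak{y}\to\bar{\mathfrak{y}}^{\e}$ as in~\eqref{e:convergence}. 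It therefore suffices to show that $\bar\mu^{\e}$ is the unique stationary measure of mass~$M$ and that the convergence holds along every sequence.

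For uniqueness I would argue by contradiction exactly as sketched before the statement. Given two stationary measures $\bar\mu^{\e},\bar\mu'^{\e}$ of mass~$M$, realise the corresponding stationary solutions $\bar u^{\e},\bar u'^{\e}$ on one probability space, both driven by the single stationary process~$\bar\eta^{\e}$ and extended to two-sided time so that each is a fixed point of the random cocycle generated by~\eqref{e:rBurgers}. The pair $(\bar u^{\e}(\tau),\bar u'^{\e}(\tau))$ is then a jointly stationary process, so $\phi^{\e}(\tau)=\int_{\R}|\bar u^{\e}(\tau,\xi)-\bar u'^{\e}(\tau,\xi)|\,d\xi$ has a $\tau$-independent law and $\mathbb{E}\phi^{\e}(\tau)$ is constant. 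Since $\bar u^{\e}$ and $\bar u'^{\e}$ carry the same mass, Lemma~\ref{lem:contraction} makes $\phi^{\e}$ strictly decreasing almost surely; constancy of $\mathbb{E}\phi^{\e}$ then forces $\phi^{\e}\equiv0$, that is $\bar u^{\e}=\bar u'^{\e}$ and $\bar\mu^{\e}=\bar\mu'^{\e}$. Applying Bogolyubov--Krylov to an arbitrary subsequence of $\{\mathcal{P}^{*\e}_{\tau}\mathfrak{y}\}$ produces a stationary limit that must coincide with this unique~$\bar{\mathfrak{y}}^{\e}$; since the weak topology on measures over the Polish space $(L^{2}(K)\cap L^{\infty}(\R))\times H^{1}(K)$ is metrizable, a relatively compact orbit with a single limit point converges, so $\mathcal{P}^{*\e}_{\tau}\mathfrak{y}\to\bar{\mathfrak{y}}^{\e}$ and, projecting on the $u$-marginal, $u^{\e}(\tau)\to\bar u^{\e}$ in distribution in~$L^{2}(K)$.

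I expect the main obstacle to be the step \emph{strict almost sure decrease contradicts stationarity}, which is only valid after the two stationary solutions have been coupled through a common two-sided driving noise so that $\phi^{\e}$ is genuinely a stationary process; I would make this cocycle (pullback) construction explicit rather than treat $\bar u^{\e}$ and $\bar u'^{\e}$ as independent copies. Two supporting points must accompany it. First, $\mathbb{E}\phi^{\e}(\tau)<\infty$, which follows from $\bar\mu^{\e},\bar\mu'^{\e}\in\mathfrak{M}_{2}$ together with the inclusion $L^{2}(K)\subset L^{1}(\R)$ (valid because $\int_{\R}K^{-1}\,d\xi<\infty$), and which also bridges the $L^{1}$-based functional~$\phi^{\e}$ and the $L^{2}(K)$ topology in which convergence is claimed. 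Second, relative compactness of the \emph{whole} orbit $\{\mathcal{P}^{*\e}_{\tau}\mathfrak{y}\}_{\tau\geq0}$, for which the integrated $H^{1}(K)$ bound alone is not enough; I would instead derive a uniform-in-$\tau$ estimate $\sup_{\tau\geq1}\mathbb{E}\|u^{\e}(\tau)\|_{H^{1}(K)}<\infty$ from the parabolic smoothing of the analytic semigroup~$S(\tau)$ applied to the uniform $L^{2}(K)\cap L^{\infty}(\R)$ bound.
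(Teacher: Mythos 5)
Your proposal follows essentially the same route as the paper: Bogolyubov--Krylov supplies a stationary measure, and uniqueness together with convergence of the whole orbit is obtained by playing the almost-sure strict decrease of $\phi^{\e}$ from Lemma~\ref{lem:contraction} against the stationarity of two putative limit measures of equal mass. The three supplementary points you flag --- coupling the two stationary solutions through a common driving noise so that $\phi^{\e}$ is genuinely stationary, finiteness of $\mathbb{E}\phi^{\e}$ via $L^{2}(K)\subset L^{1}(\R)$, and relative compactness of the full orbit $\{\mathcal{P}^{*\e}_{\tau}\mathfrak{y}\}_{\tau\geq 0}$ rather than only of its time averages --- are precisely the steps the paper leaves implicit, and making them explicit refines rather than alters the argument.
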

\begin{remark}
By the construction of~$\bar{u}^{\e}$,  
\begin{equation}\label{e:u-bar-eps}
\|\bar{u}^{\e}(\tau)\|_{L^{\infty}(\R)}\leq m\,,\quad \mathbb{E}\|\bar{u}^{\e}(\tau)\|^{2}_{H^{1}(K)}\leq C\quad \text{for all } \tau\geq 0\,,
\end{equation}
for some constant~$C>0$. 
\end{remark}
We want to pass the above convergence  property to the stochastic Burgers' equation~(\ref{e:tau-xi-Burgers}); that is, we want to pass to the limit $\e\rightarrow 0$ in~$u^{\e}$ in the space~$C([0, \infty), L^{2}(K))$. We give some estimates uniform in~$\e$ in the next section.


\section{Some a priori estimates on finite time intervals}\label{sec:estimates}

This section shows the tightness of $\{u^{\e}\}_{0<\e\leq 1}$ in the space~$C(0, T; L^{2}(K))$ for any $T>0$\,. We assume $u_{0}^{\e}\in L^{2}(\Omega, L^{2}(K)\cap L^{\infty}(\R))$ and converges in distribution to $u_{0}\in L^{2}(\Omega, L^{2}(K)\cap L^{\infty}(\R))$\,.

From equation~(\ref{e:u-e-s}),
by the chain rule, 
\begin{eqnarray}\label{e:energy}
 \tfrac12\frac{d}{dt}\|u_{s}^{\e}(\tau)\|^{2}_{L^{2}(K)} 
&\leq&-\tfrac12\|u^{\e}(\tau)\|^{2}_{H^{1}(K)}+\left\langle u^{\e}(\tau)u^{\e}_{\xi}(\tau) , u^{\e}(\tau)\right\rangle \\
&&{}+\left\langle \tfrac{1}{\sqrt{\e}}\bar{\eta}^{\e}(\tau)u^{\e}_{\xi}(\tau)+\tfrac{1}{\sqrt{\e}}\bar{\eta}_{\xi}^{\e}(\tau)u^{\e}(\tau), u^{\e}(\tau)\right\rangle.\nonumber
\end{eqnarray}
Define the two integrals
\begin{eqnarray*}
I^{\e}_{1}(\tau)&=&\frac{1}{\sqrt{\e}}\int_{0}^{\tau}\langle u^{\e}_{\xi}(s)\bar{\eta}^{\e}(s), u^{\e}(s)\rangle\,ds\,, \\ 
I^{\e}_{2}(\tau)&=&\frac{1}{\sqrt{\e}}\int_{0}^{\tau}\langle u^{\e}(s)\bar{\eta}_{\xi}^{\e}(s), u^{\e}(s)\rangle\,ds\,.
\end{eqnarray*}
Now by the factorization method, for some $0<\alpha<1$\,,
\begin{equation*}
\frac{1}{\sqrt{\e}}\int_{0}^{\tau}\langle u^{\e}_{\xi}(s)\bar{\eta}^{\e}(s), u^{\e}(s)\rangle\,ds
=\frac{\sin\pi\alpha}{\alpha}\int_{0}^{\tau}(\tau-s)^{\alpha-1}Y^{\e}(s)\,ds 
\end{equation*}
where 
\begin{eqnarray*}
Y^{\e}(s)&=&\frac{1}{\sqrt{\e}}\int_{0}^{s}(s-r)^{-\alpha}\langle  u^{\e}_{\xi}(r)\bar{\eta}^{\e}(r), u^{\e}(r) \rangle\,dr\\
&=&\frac{1}{2\sqrt{\e}}\int_{0}^{s}(s-r)^{\alpha}\langle (u^{\e}(r))^{2}_{\xi}, \bar{\eta}^{\e}(r) \rangle\,dr\\
&=&-\frac{1}{2\sqrt{\e}}\int_{0}^{s}(s-r)^{-\alpha}\int_{\R}(u^{\e}(r))^{2}\bar{\eta}^{\e}(r)K_{\xi}\,d\xi \, dr\\&&{}-\frac{1}{2\sqrt{\e}}\int_{0}^{s}(s-r)^{-\alpha}\int_{\R} (u^{\e}(r))^{2}\bar{\eta}^{\e}_{\xi}(r)K\,d\xi\,dr\\
&=&Y_{1}^{\e}(s)+Y^{\e}_{2}(s).
\end{eqnarray*}
Then for any $T>0$\,, there is some positive constant $C_{T}$ such that 
\begin{equation*}
\sup_{0\leq \tau\leq T}|I_{1}^{\e}(\tau)|^{2}\leq C_{T}\int_{0}^{T}|Y_{1}^{\e}(s)|^{2}\,ds+C_{T}\int_{0}^{T}|Y_{2}^{\e}(s)|^{2}\,ds\,.
\end{equation*}
 We first consider~$Y_{1}^{\e}$. By  the $L^{\infty}(\R)$~estimates on~$u^{\e}$, and the construction of~$\bar{\eta}^{\e}$, 
 \begin{eqnarray*}
&& \mathbb{E}|Y_{1}^{\e}(s)|^{2}\\&=&
 \frac{1}{\e}\left|\mathbb{E}\int_{0}^{s}\int_{\rho}^{s}
 \left[(s-r)^{-\alpha}(s-\rho)^{-\alpha}\int_{\R}(u^{\e}(r,\xi))^{2}\bar{\eta}^{\e}(r,\xi)K_{\xi}\,d\xi
 \right.\right.\\&&{}\times\left.\left.
 \int_{\R}(u^{\e}(\rho,\zeta))^{2}\bar{\eta}^{\e}(\rho,\zeta)K_{\zeta}\,d\zeta\right] dr\, d\rho\right|\\
 &\leq& \frac{m^{4}}{\e}\left|\int_{0}^{s}\int_{\rho}^{s}(s-r)^{-\alpha}(s-\rho)^{-\alpha} \int_{\R}\int_{\R}\mathbb{E}\bar{\eta}^{\e}(r,\xi)\bar{\eta}^{\e}(\rho,\zeta)K_{\xi}K_{\zeta}\,d\xi \, d\zeta\, dr\, d\rho\right|\\
 &\leq& C_{1,T}\,,
 \end{eqnarray*}
and similarly 
  \begin{eqnarray*}
 &&\mathbb{E}|Y_{2}^{\e}(s)|^{2}\\&=&
 \frac{1}{\e}\left|\mathbb{E}\int_{0}^{s}\int_{\rho}^{s}
 \left[(s-r)^{-\alpha}(s-\rho)^{-\alpha}\int_{\R}(u^{\e}(r,\xi))^{2}\bar{\eta}_{\xi}^{\e}(r,\xi)K\,d\xi
 \right.\right.\\&&{}\times\left.\left.
 \int_{\R}(u^{\e}(\rho,\zeta))^{2}\bar{\eta}_{\zeta}^{\e}(\rho,\zeta)K\,d\zeta\right] dr\, d\rho\right|\\
 &\leq& C_{1,T}\,,
 \end{eqnarray*}
 for some positive constant~$C_{1,T}$. Then  
 \begin{equation*}
 \mathbb{E}\sup_{0\leq \tau\leq T}|I^{\e}_{1}(\tau)|\leq C_{T}C_{1,T}\,.
 \end{equation*}
By the same discussion for~$I_{2}^{\e}$, a similar expectation holds: 
 \begin{equation*}
 \mathbb{E}\sup_{0\leq \tau\leq T}|I^{\e}_{2}(\tau)|\leq C_{T}C_{2,T}\,,
 \end{equation*}
for some positive constant~$C_{2,T}$.
 Then  by the same discussion for~(\ref{e:uu-xi}) and the Gronwall lemma,
 \begin{equation}\label{e:estimates}
\mathbb{E}\sup_{0\leq \tau\leq T}  \|u^{\e}(\tau)\|^{2}_{L^{2}(K)}+\mathbb{E}\int_{0}^{T}\|u^{\e}(s)\|^{2}_{H^{1}(K)}\,ds\leq C_{T} 
 \end{equation}
 for some positive constant~$C_{T}$.  Notice that in the mild sense
 \begin{eqnarray*}
 u^{\e}(\tau)&=&S(\tau)u^{\e}_{0}+\int_{0}^{\tau}S(
 \tau-\sigma)u^{\e}(\sigma)u^{\e}_{\xi}(\sigma)\,d\sigma\\&&{}+\frac{1}{\sqrt{\e}}\int_{0}^{\tau}S(\tau-\sigma)(u^{\e}(\sigma)\bar{\eta}^{\e}(\sigma))_{\xi}\,d\sigma \,.
 \end{eqnarray*}
Then for any $T>\tau>\delta>0$\,,  
\begin{eqnarray}\label{e:u-tau-delta}
&&\|u^{\e}(\tau)-u^{\e}(\delta)\|_{L^{2}(K)}\\&\leq &\|(S(\tau)-S(\delta))u_0\|_{L^{2}(K)}+\left\|\int_{\delta}^{\tau}S(\tau-\sigma)u^{\e}(\sigma)u^{\e}_{\xi}(\sigma)\,d\sigma\right\|_{L^{2}(K)}\nonumber\\
&&{}+\frac{1}{\sqrt{\e}}\left\|\int_{\delta}^{\tau}S(\tau-\sigma)(u^{\e}(\sigma)\bar{\eta}^{\e}(\sigma))_{\xi}\,d\sigma\right\|_{L^{2}(K)}\nonumber\\&&{}+\left\|\int_{0}^{\delta}[S(\tau-\sigma)-S(\delta-\sigma)]u^{\e}(\sigma)u^{\e}_{\xi}(\sigma)\,d\sigma\right\|_{L^{2}(K)}\nonumber\\
&&{}+\frac{1}{\sqrt{\e}}\left\|\int_{0}^{\delta}[S(\tau-\sigma)-S(\delta-\sigma)](u^{\e}(\sigma)\bar{\eta}^{\e}(\sigma))_{\xi}\,d\sigma\right\|_{L^{2}(K)}.\nonumber
\end{eqnarray}
By the $L^{\infty}(\R)$ estimate of~$u^{\e}$, and by estimate~(\ref{e:estimates}), the expectation
\begin{eqnarray*}
&&\mathbb{E}\left\|\int_{\delta}^{\tau}S(\tau-\sigma)u^{\e}(\sigma)u^{\e}_{\xi}(\sigma)d\sigma\right\|_{L^{2}(K)}\\
&\leq&\mathbb{E}\int_{\delta}^{\tau}\|S(\tau-\sigma)u^{\e}(\sigma)u^{\e}_{\xi}(\sigma)\|_{L^{2}(K)}d\sigma\\
&\leq &\mathbb{E}\int_{\delta}^{\tau}\|u^{\e}(\sigma)u^{\e}_{\xi}(\sigma)\|_{L^{2}(K)}d\sigma
 \\&\leq& m\mathbb{E}\int_{\delta}^{\tau}\|u^{\e}_{\xi}(\sigma)\|_{L^{2}(K)}\,d\sigma\leq  mC_{T}\sqrt {\tau-\delta}\,.
\end{eqnarray*}
Expanding by~$\{e_{k}\}_{k}$ and by~(\ref{e:q}), 
\begin{eqnarray*}
&&\frac{1}{\e}\mathbb{E}\left\| \int_{\delta}^{\tau}S(\tau-\sigma)(u^{\e}(\sigma)\bar{\eta}^{\e}(\sigma))_{\xi}\,d\sigma\right\|^{2}_{L^{2}(K)}
\\&\leq&\frac{1}{\e}\mathbb{E}\sum_{k}\int_{\delta}^{\tau}\int_{\delta}^{\tau} 
\left[e^{-\lambda_{k}(\tau-\sigma)}\int_{\R} u^{\e}(\sigma,\xi)\bar{\eta}^{\e}(\sigma,\xi) (e_{k}K)_{\xi}\,d\xi
\right.\\
&&\left.{}\times e^{-\lambda_{k}(\tau-\lambda)}\int_{\R}u^{\e}(\lambda, \zeta)\bar{\eta}^{\e}(\lambda, \zeta)(e_{k}K)_{\zeta}\,d\zeta\right] d\sigma\, d\lambda \\
&\leq &\frac{m^{2}}{\e} \sum_{k}\int_{\delta}^{\tau}\int_{\delta}^{\tau} 
\left[e^{-\lambda_{k}(\tau-\sigma)}e^{-\lambda_{k}(\tau-\lambda)} \int_{\R} \int_{\R}\mathbb{E}\bar{\eta}^{\e}(\sigma,\xi) \bar{\eta}^{\e}(\lambda, \zeta) 
\right.\\&&\left.\vphantom{\int}{}
\times(e_{k}K)_{\xi}(e_{k}K)_{\zeta}\,d\xi\,d\zeta \right]  d\sigma\, d\lambda\\
&\leq&C_{T}(\tau-\delta),
\end{eqnarray*}
for some positive constant~$C_{T}$. 
By the strong continuity of the semigroup~$S(\tau)$  and a similar discussion, from~(\ref{e:u-tau-delta})  the expectation
\begin{equation}\label{e:u-holder}
\mathbb{E}\|u^{\e}(\tau)-u^{\e}(\delta)\|_{L^{2}(K)}\leq C_{T}\sqrt{\tau-\delta}\,.
\end{equation}

Now we need the following lemma~\cite{Mai03}. Suppose $\mathcal{X}_{1}$~and~$\mathcal{X}_{2}$ are two Banach spaces. Let $T>0$\,, $1\leq p\leq \infty$\,, and $\mathcal{B}$~be a compact operator from~$\mathcal{X}_{1}$ to~$\mathcal{X}_{2}$; that is, $\mathcal{B}$~maps bounded sets of~$\mathcal{X}_{1}$ to relatively compact subsets of~$\mathcal{X}_{2}$. 
\begin{lemma}[\cite{Mai03}]\label{lem:compact}
Let $H$ be a bounded subset of $L^{1}(0, T; \mathcal{X}_{1})$ such that $G=\mathcal{B}H$ is a subset of $L^{p}(0, T; \mathcal{X}_{2})$ bounded in $L^{r}(0, T; \mathcal{X}_{2})$ with $r>1$\,. If 
\begin{equation*}
\lim_{\sigma\rightarrow 0}\|u(\cdot+\sigma)-u(\cdot)\|_{L^{p}(0, T; \mathcal{X}_{2})}=0 \quad \text{uniformly for } u\in G\,,
\end{equation*}
then $G$~is relatively compact in $L^{p}(0, T; \mathcal{X}_{2})$ (and in $C(0, T; \mathcal{X}_{2})$ if $p=+\infty$).
\end{lemma}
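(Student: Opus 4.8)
The plan is to recognise this as a Simon-type compactness criterion and to exploit its two sources of compactness separately: the operator~$\mathcal{B}$ supplies compactness in the $\mathcal{X}_2$-variable, while the translation hypothesis supplies equicontinuity in time. I would approximate each $u\in G$ by a short forward time-average and then show that (i)~for each fixed averaging window these averages form a relatively compact family, and (ii)~the averages approximate $u$ in $L^p$ uniformly over~$G$. Total boundedness of~$G$, and hence relative compactness, then follows.

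For fixed $\sigma\in(0,T)$ and $t\in[0,T-\sigma]$ set $u_\sigma(t)=\frac1\sigma\int_t^{t+\sigma}u(s)\,ds$. Writing $u=\mathcal{B}h$ with $h\in H$ and commuting the bounded linear operator~$\mathcal{B}$ with the Bochner integral gives $u_\sigma(t)=\mathcal{B}\bigl(\frac1\sigma\int_t^{t+\sigma}h(s)\,ds\bigr)$, where the inner average has $\mathcal{X}_1$-norm at most $R_\sigma:=\sigma^{-1}\sup_{h\in H}\|h\|_{L^1(0,T;\mathcal{X}_1)}$. Hence $u_\sigma(t)\in\mathcal{B}(B_{\mathcal{X}_1}(0,R_\sigma))$ for all such $t$ and all $u\in G$, and this image is relatively compact in~$\mathcal{X}_2$ since $\mathcal{B}$ is compact. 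For $t<t'$ in $[0,T-\sigma]$ one has $u_\sigma(t)-u_\sigma(t')=\frac1\sigma\bigl(\int_t^{t'}u-\int_{t+\sigma}^{t'+\sigma}u\bigr)$, so H\"older's inequality yields $\|u_\sigma(t)-u_\sigma(t')\|_{\mathcal{X}_2}\le 2\sigma^{-1}|t'-t|^{1-1/r}\|u\|_{L^r(0,T;\mathcal{X}_2)}$. As $G$ is bounded in $L^r$ with $r>1$, the family $\{u_\sigma:u\in G\}$ is equi-H\"older-continuous with the positive exponent $1-1/r$, so by the Arzel\`a--Ascoli theorem it is relatively compact in $C([0,T-\sigma];\mathcal{X}_2)$, and a fortiori in $L^p(0,T-\sigma;\mathcal{X}_2)$.

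It then remains to pass from the averages back to~$u$ uniformly. Since $u_\sigma(t)-u(t)=\frac1\sigma\int_0^\sigma[u(t+s)-u(t)]\,ds$, Minkowski's inequality gives $\|u_\sigma-u\|_{L^p}\le\sup_{0\le s\le\sigma}\|u(\cdot+s)-u(\cdot)\|_{L^p(0,T;\mathcal{X}_2)}$, which tends to $0$ as $\sigma\to0$ uniformly in $u\in G$ by the standing hypothesis. Thus, restricted to $[0,T-\sigma]$, the set $G$ is approximated in $L^p$, uniformly over its elements, by the relatively compact families $\{u_\sigma\}$; a standard argument then produces a finite $L^p$-net for~$G$ and establishes total boundedness, whence relative compactness in $L^p(0,T;\mathcal{X}_2)$. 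When $p=+\infty$ the entire argument is run with sup-norms, and the Arzel\`a--Ascoli step delivers relative compactness directly in $C(0,T;\mathcal{X}_2)$.

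I expect the main obstacle to be the bookkeeping that turns ``uniform approximation by relatively compact sets'' into genuine total boundedness of~$G$, carried out simultaneously with the truncation of the time interval to $[0,T-\sigma]$: one first chooses $\sigma$ small to control $\|u_\sigma-u\|_{L^p}$ together with the contribution of the shrinking tail $[T-\sigma,T]$ uniformly over~$G$, and only then extracts a finite net from the compact family $\{u_\sigma\}$. A secondary but essential point is that the equicontinuity of the averages genuinely needs $r>1$, since the H\"older exponent $1-1/r$ must be strictly positive; this is the sole place where the $L^r$-boundedness enters.
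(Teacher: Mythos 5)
The paper offers no proof of this lemma itself (it is quoted from Ma\^{\i}tre's paper), so your argument has to be judged on its own terms. What you have written is the classical Simon-type proof, and it is essentially correct --- but only for \emph{linear} $\mathcal{B}$, which is a strictly weaker statement than the one quoted. The decisive step is ``commuting the bounded linear operator $\mathcal{B}$ with the Bochner integral'': the lemma never assumes linearity. It defines compactness of $\mathcal{B}$ as ``maps bounded sets of $\mathcal{X}_1$ to relatively compact subsets of $\mathcal{X}_2$'', which is the nonlinear notion, and the entire point of the cited reference (``On a \emph{nonlinear} compactness lemma in $L^p(0,T;B)$'') is to remove linearity. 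For nonlinear $\mathcal{B}$ the identity $\tfrac1\sigma\int_t^{t+\sigma}\mathcal{B}h(s)\,ds=\mathcal{B}\bigl(\tfrac1\sigma\int_t^{t+\sigma}h(s)\,ds\bigr)$ fails, and with it your first paragraph: the time-averages of $u$ need no longer lie in the $\mathcal{B}$-image of a bounded set, so you lose pointwise relative compactness. The repair is Ma\^{\i}tre's actual argument: verify Simon's criterion on the sets $\bigl\{\int_{t_1}^{t_2}u(t)\,dt:u\in G\bigr\}$ directly. By Chebyshev the set $E_M=\{t:\|h(t)\|_{\mathcal{X}_1}\le M\}$ has complement of measure $O(M^{-1})$ uniformly over $H$ (this is where the $L^1(0,T;\mathcal{X}_1)$ bound on $H$ is really used); on $E_M$ one has $u(t)\in\mathcal{B}\bigl(B_{\mathcal{X}_1}(0,M)\bigr)$, so $\int_{E_M\cap(t_1,t_2)}u\,dt$ lies in a dilate of the closed convex hull of a compact set, which is compact by Mazur's theorem; and the contribution of the complement is $O\bigl(M^{-(1-1/r)}\bigr)$ by H\"older and the $L^r$ bound on $G$ --- which is where $r>1$ genuinely enters, rather than only through your H\"older-continuity exponent.

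That said, in the one place the paper invokes the lemma, $\mathcal{B}$ is the linear compact embedding $H^1(K)\hookrightarrow L^2(K)$, so your argument does cover the case actually needed, and within the linear setting the averaging, the equi-H\"older bound via H\"older's inequality, the Arzel\`a--Ascoli step, and the Minkowski estimate $\|u_\sigma-u\|_{L^p}\le\sup_{0\le s\le\sigma}\|u(\cdot+s)-u(\cdot)\|_{L^p}$ are all sound (note the latter norm should be taken over $(0,T-s)$). Two items you flag but do not actually carry out --- the uniform smallness of the tail $\|u\|_{L^p(T-\sigma,T;\mathcal{X}_2)}$ and the conversion of ``uniformly approximated by relatively compact families'' into total boundedness --- are standard but are genuine steps of Simon's theorem, not mere bookkeeping, and deserve at least a citation.
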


Let  $\mathcal{X}_{1}=H^{1}(K)$, $\mathcal{X}_{2}=L^{2}(K)$ and $\mathcal{B}$~be the embedding from $\mathcal{X}_{1}$ to $\mathcal{X}_{2}$\,, by Lemma~\ref{lem:compact}, from estimates~(\ref{e:estimates}) and~(\ref{e:u-holder}) we obtain the following main theorem of this section.
 \begin{theorem}\label{thm:tight-T}
 Assume Assumption~\ref{ass:a} holds. For any $T>0$\,,  and $u^{\e}_{0}\in L^{2}(\Omega, L^{2}(K)\cap L^{\infty}(\R))$ satisfying~(\ref{e:ini-mM}),  the distribution of~$\{u^{\e}\}_{0<\e\leq 1}$ is tight in the space~$C(0, T; L^{2}(K))$.
 \end{theorem}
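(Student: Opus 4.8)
The plan is to prove tightness through Prokhorov's theorem: it suffices to exhibit, for every $\eta>0$, a compact set $\mathcal{K}_{\eta}\subset C(0,T;L^{2}(K))$ with $\sup_{0<\e\le 1}\mathbb{P}(u^{\e}\notin\mathcal{K}_{\eta})\le\eta$. I would build these sets out of the two deterministic features that Lemma~\ref{lem:compact} turns into relative compactness in $C(0,T;L^{2}(K))$, taking there $\mathcal{X}_{1}=H^{1}(K)$, $\mathcal{X}_{2}=L^{2}(K)$, $p=+\infty$, and $\mathcal{B}$ the embedding $H^{1}(K)\subset L^{2}(K)$, which is compact by Lemma~\ref{lem:Kavian}. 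Concretely, for $R>0$ and a modulus $\rho(\sigma)\downarrow 0$ set
\begin{equation*}
G_{R,\rho}=\Big\{u:\ \|u\|_{L^{\infty}(0,T;L^{2}(K))}\le R,\ \int_{0}^{T}\|u(s)\|_{H^{1}(K)}\,ds\le R,\ \sup_{0\le s\le T-\sigma}\|u(s+\sigma)-u(s)\|_{L^{2}(K)}\le\rho(\sigma)\ \ \forall\,\sigma\Big\}.
\end{equation*}
The first bound puts $G_{R,\rho}$ in a bounded subset of $L^{\infty}(0,T;L^{2}(K))$ (hence of $L^{r}$ for every $r>1$), the second in a bounded subset of $L^{1}(0,T;H^{1}(K))$, and the third is exactly the uniform time-translation hypothesis of Lemma~\ref{lem:compact} with $p=+\infty$; thus $G_{R,\rho}$ is relatively compact in $C(0,T;L^{2}(K))$ and its closure $\mathcal{K}_{R,\rho}$ is the candidate compact set.

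It then remains to bound $\mathbb{P}(u^{\e}\notin\mathcal{K}_{R,\rho})$ uniformly in $\e$, which splits into three events. The $L^{\infty}(0,T;L^{2}(K))$ clause is controlled by Chebyshev's inequality directly from~(\ref{e:estimates}), since $\mathbb{E}\sup_{\tau}\|u^{\e}(\tau)\|^{2}_{L^{2}(K)}\le C_{T}$ gives $\mathbb{P}(\sup_{\tau}\|u^{\e}\|_{L^{2}(K)}>R)\le C_{T}/R^{2}$. The $L^{1}(0,T;H^{1}(K))$ clause follows similarly after Cauchy--Schwarz, $\int_{0}^{T}\|u^{\e}\|_{H^{1}(K)}\,ds\le\sqrt{T}\,(\int_{0}^{T}\|u^{\e}\|^{2}_{H^{1}(K)}\,ds)^{1/2}$, again using~(\ref{e:estimates}). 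Choosing $R$ large makes each of these probabilities at most $\eta/3$, uniformly in $\e$.

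The main obstacle is the modulus-of-continuity clause, and it is here that the estimate~(\ref{e:u-holder}) is, as it stands, too weak: it controls only the \emph{first moment of a single increment at fixed times}, $\mathbb{E}\|u^{\e}(\tau)-u^{\e}(\delta)\|_{L^{2}(K)}\le C_{T}|\tau-\delta|^{1/2}$, whereas membership in $G_{R,\rho}$ requires a \emph{pathwise} bound on $\sup_{s}\|u^{\e}(s+\sigma)-u^{\e}(s)\|_{L^{2}(K)}$ holding simultaneously for all $\sigma$ with high probability. Passing from the former to the latter is the delicate point. I would resolve it by upgrading~(\ref{e:u-holder}) to a higher-moment increment bound of the form $\mathbb{E}\|u^{\e}(\tau)-u^{\e}(\delta)\|^{2p}_{L^{2}(K)}\le C_{T,p}|\tau-\delta|^{p}$ for some $p>1$: the pathwise bound $\|u^{\e}\|_{L^{\infty}(\R)}\le m$ lets one estimate the drift increment by $m\int_{\delta}^{\tau}\|u^{\e}_{\xi}\|_{L^{2}(K)}\,ds$, while the factorization method already used to derive~(\ref{e:u-holder}) extends to $2p$-th moments of the stochastic term; both steps require the $2p$-th-power analogues of the energy estimate~(\ref{e:estimates}), obtained by repeating the It\^o/energy computation at the power $2p$. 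A bound with $p>1$ feeds the Kolmogorov--Chentsov criterion and yields $\mathbb{E}\|u^{\e}\|_{C^{0,\gamma}([0,T];L^{2}(K))}\le C_{T}$, uniformly in $\e$, for some $\gamma\in(0,1/2)$. Since $\sup_{s}\|u^{\e}(s+\sigma)-u^{\e}(s)\|_{L^{2}(K)}\le\|u^{\e}\|_{C^{0,\gamma}}\sigma^{\gamma}$, choosing $\rho(\sigma)=L\sigma^{\gamma}$ and applying Chebyshev to $\|u^{\e}\|_{C^{0,\gamma}}$ bounds the last probability by $\eta/3$ for $L$ large. Combining the three estimates gives $\sup_{\e}\mathbb{P}(u^{\e}\notin\mathcal{K}_{R,\rho})\le\eta$, and Prokhorov's theorem then yields tightness of $\{u^{\e}\}_{0<\e\le1}$ in $C(0,T;L^{2}(K))$.
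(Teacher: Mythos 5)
Your proposal follows the same skeleton as the paper's argument: the key deterministic compactness criterion is Lemma~\ref{lem:compact} with $\mathcal{X}_{1}=H^{1}(K)$, $\mathcal{X}_{2}=L^{2}(K)$, $p=+\infty$, and the probabilistic input is exactly the pair of estimates~(\ref{e:estimates}) and~(\ref{e:u-holder}); your Chebyshev treatment of the $L^{\infty}(0,T;L^{2}(K))$ and $L^{1}(0,T;H^{1}(K))$ clauses is what the paper leaves implicit. Where you genuinely depart from the paper is in the modulus-of-continuity clause. The paper passes directly from~(\ref{e:estimates}) and~(\ref{e:u-holder}) to the conclusion via Lemma~\ref{lem:compact}, whereas you correctly observe that~(\ref{e:u-holder}) is a first-moment bound on a single increment at a fixed pair of times, which by itself does not deliver the \emph{uniform} (over $u\in G$, i.e.\ pathwise with high probability, simultaneously in $\sigma$) translation estimate that Lemma~\ref{lem:compact} requires for compactness in $C(0,T;L^{2}(K))$ -- the exponent $1/2$ with moment $1$ does not meet the Kolmogorov criterion $\mathbb{E}\|X_{\tau}-X_{\delta}\|^{a}\le C|\tau-\delta|^{1+b}$. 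Your repair (upgrade to $2p$-th moments with $p>1$, then Kolmogorov--Chentsov, then Chebyshev on the H\"older norm) is the standard and correct way to close this, and it buys a genuinely complete argument at the cost of redoing the energy and factorization estimates at higher powers; the Gaussianity of $\bar{\eta}^{\e}$ together with the $L^{\infty}(\R)$ bound on $u^{\e}$ makes those $2p$-th moment bounds routine, so the sketch is credible. One point you should still handle explicitly: the term $\|(S(\tau)-S(\delta))u_{0}^{\e}\|_{L^{2}(K)}$ in~(\ref{e:u-tau-delta}) does not satisfy a uniform H\"older bound for initial data merely in $L^{2}(K)$ (the smoothing rate degenerates as $\delta\to0$), so this contribution must be split off and treated as a fixed family of continuous paths issuing from a tight family of initial data, rather than absorbed into the Kolmogorov--Chentsov estimate.
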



\section{Diffusion approximation}\label{sec:diffusionApp}

This section determines the limit of~$u^{\e}$, the solutions of the \textsc{rde}~(\ref{e:rBurgers}), as $\e\rightarrow 0$\,.   We first show the tightness of~$u^{\e}$ in the space~$C([0, \infty), L^{2}(K))$ by  Theorem~\ref{thm:tight-T}. Then we determine the limit  of~$u^{\e}$ in the space~$C([0,\infty), L^{2}(K))$ by a martingale approach. 

\subsection{Tightness in space $C([0, \infty), L^{2}(K))$}
We need the following result on the tightness of a family processes~\cite[Theorem~3.9.1]{EK86}.
\begin{lemma}\label{lem:tight}
Let $\mathcal{X}$ be a Polish space and let $\{X^{\e}\}_{0< \e\leq 1}$ be a family of processes with sample paths in the space~$C([0,\infty), \mathcal{X})$. Suppose that for any $\delta>0$ and $T>0$ there exists a compact set $\Gamma_{\delta, T}\subset \mathcal{X}$ such that for all $0<\e\leq 1$
\begin{equation}\label{e:tight-condition}
\mathbb{P}\{ X^{\e}(t)\in \Gamma_{\delta, T} \text{ for } 0\leq t\leq T \}\geq 1-\delta \,.
\end{equation}
Then $\{X^{\e}\}$ is tight in the space $C([0,\infty), \mathcal{X} )$ if and only if $\{F(X^{\e})\}_{0<\e\leq 1}$ is tight in the space~$C([0, \infty), \R)$ for any $F\in C_{b}(\mathcal{X})$, where \(C_{b}(\mathcal{X})\)~is the space consisting of all continuous and bounded functions on~$\mathcal{X}$.
\end{lemma}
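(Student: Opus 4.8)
The plan is to prove the two implications separately, observing that all the substance lies in the \emph{if} direction; the compact containment condition~(\ref{e:tight-condition}) is needed only there.

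For the \emph{only if} direction I would argue by continuity and push-forward. Fix $F\in C_{b}(\mathcal{X})$ and consider the induced map $\Psi_{F}\colon C([0,\infty),\mathcal{X})\to C([0,\infty),\R)$ given by $\Psi_{F}(x)(\cdot)=F(x(\cdot))$, where both path spaces carry the topology of uniform convergence on compact time intervals. This map is continuous: on any interval $[0,T]$ a convergent sequence of paths takes values in a compact subset of $\mathcal{X}$ on which $F$ is uniformly continuous, so uniform convergence of paths forces uniform convergence of their $F$-images. Since continuous maps send tight families to tight families (push the approximating compacta forward by $\Psi_{F}$), tightness of $\{X^{\e}\}$ in $C([0,\infty),\mathcal{X})$ immediately gives tightness of $\{F(X^{\e})\}$ in $C([0,\infty),\R)$ for every $F$.

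For the \emph{if} direction I would invoke the Arzel\`a--Ascoli characterization of tightness in $C([0,\infty),\mathcal{X})$: with $d$ a complete metric metrizing $\mathcal{X}$, the family $\{X^{\e}\}$ is tight if and only if (a)~the compact containment condition holds, and (b)~the stochastic equicontinuity condition
\[
\lim_{\eta\to0}\limsup_{\e}\mathbb{P}\{w(X^{\e},\eta,T)>\rho\}=0,\qquad w(x,\eta,T)=\sup_{\substack{s,t\in[0,T]\\|s-t|\le\eta}}d(x(s),x(t)),
\]
holds for all $T,\rho>0$. Since (a) is assumed, the entire task reduces to deducing the metric equicontinuity~(b) from the tightness of the scalar processes $\{F(X^{\e})\}$.

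The key step, which I expect to be the main obstacle, is transferring control of the metric modulus $w(X^{\e},\eta,T)$ to the moduli of finitely many real processes; this is exactly where the Polish hypothesis is used. By Urysohn metrization there is a countable family $\{g_{k}\}\subset C_{b}(\mathcal{X})$, with values in $[0,1]$, realizing the topology through $\rho(x,y)=\sum_{k}2^{-k}|g_{k}(x)-g_{k}(y)|$. On the compact set $\Gamma_{\delta,T}$ the metrics $d$ and $\rho$ generate the same uniformity and are therefore uniformly equivalent, so a bound on the $\rho$-modulus yields one on the $d$-modulus; truncating the series at a level $N$ with negligible tail $\sum_{k>N}2^{-k}$ then bounds the $\rho$-modulus by $\max_{k\le N}w(g_{k}(X^{\e}),\eta,T)$. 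Restricting to the containment event of probability at least $1-\delta$ and applying a union bound over the finitely many indices $k\le N$, I would estimate $\mathbb{P}\{w(X^{\e},\eta,T)>\rho\}$ by $\delta$ plus $\sum_{k\le N}\mathbb{P}\{w(g_{k}(X^{\e}),\eta,T)>\rho'\}$. Because each $g_{k}\in C_{b}(\mathcal{X})$, the assumed tightness of $\{g_{k}(X^{\e})\}$ supplies the scalar equicontinuity, so letting $\eta\to0$ and then $\delta\to0$ establishes~(b). Together with the given condition~(a), the Arzel\`a--Ascoli characterization then yields tightness of $\{X^{\e}\}$ in $C([0,\infty),\mathcal{X})$, completing the proof.
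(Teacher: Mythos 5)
Your proposal is correct, but it is worth noting that the paper does not prove this lemma at all: it is quoted verbatim from Ethier and Kurtz~\cite[Theorem~3.9.1]{EK86}, so you have in effect reconstructed their argument, specialized from the Skorokhod space $D([0,\infty),E)$ with the c\`adl\`ag modulus to $C([0,\infty),\mathcal{X})$ with the uniform modulus. Your \emph{only if} direction (push-forward of tightness under the continuous map $\Psi_F$) is the standard easy half. Your \emph{if} direction is sound and hits the genuinely delicate points: the Urysohn-type countable family $\{g_k\}\subset C_b(\mathcal{X})$ with $\rho(x,y)=\sum_k 2^{-k}\left|g_k(x)-g_k(y)\right|$ metrizing the topology, the uniform equivalence of $\rho$ and $d$ on the compact set $\Gamma_{\delta,T}$ (which is exactly where the compact containment hypothesis is indispensable, since $\rho$ and $d$ need not be uniformly equivalent globally), the truncation of the series at a level $N$ depending on $\delta$, $\rho$ and $T$, and the final union bound followed by $\eta\to0$ and then $\delta\to0$. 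One presentational remark: the quantifier order matters and you handle it correctly --- $N$ and the scalar thresholds depend on $\delta$ through $\Gamma_{\delta,T}$, but since the bound $\limsup_{\eta\to0}\limsup_{\e}\mathbb{P}\{w(X^{\e},\eta,T)>\rho\}\leq\delta$ holds for every $\delta>0$, the modulus condition follows. Compared with simply citing \cite{EK86}, your route buys a self-contained and somewhat more elementary proof for the continuous-path case, at the cost of redoing the Arzel\`a--Ascoli characterization of tightness in $C([0,\infty),\mathcal{X})$; the paper's citation covers the more general c\`adl\`ag setting, which is not needed here.
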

\begin{remark}\label{rem:tight}
We do not need to verify the tightness of $\{F(X^{\e})\}_{0< \e\leq 1}$ for all $F\in C_{b}(\mathcal{X})$.  One just needs to verify the tightness for all~$F$ in a dense subset of~$C_{b}(\mathcal{X})$ in the topology of uniform convergence on compact sets~\cite[Theorem~3.9.1]{EK86}.
\end{remark}

By Theorem~\ref{thm:tight-T}, the pre-condition~(\ref{e:tight-condition}) in Lemma~\ref{lem:tight} holds.  Next we show the tightness of $\{F(u^{\e})\}_{0<\e\leq 1}$ in the space~$C([0, \infty))$ for any $F\in C_{b}(L^{2}(K))$. We follow a martingale approach. 
Continue to let $\mathcal{X}$~be a Polish space and $\{X^{\e}\}_{0< \e\leq 1}$~be a family of processes valued in the space~$C([0, \infty), \mathcal{X})$ adapted to the filtration~$\mathcal{F}_{\tau}^{\e}$.
%
%
%
Let $\mathfrak{L}^{\e}$~be the Banach space of real valued $\mathcal{F}_{\tau}^{\e}$-progressive processes with norm $\|Y\|_{\mathfrak{L}^\e}=\sup_{\tau\geq 0}\mathbb{E}|Y(\tau)|$.  Let 
 \begin{equation}
 \mathcal{M}^{\e}=\left\{ (Y, Z)\in \mathfrak{L}^{\e}\times \mathfrak{L}^{\e}: Y(\tau)-\int_{0}^{\tau}Z(s)\,ds \text{ is } \mathcal{F}^{\e}_{\tau}\text{-martingale}  \right\}.
 \end{equation}
Then the following lemma applies~\cite[Theorem~3.9.4]{EK86}.
\begin{lemma}\label{lem:f-tight}
For any bounded continuous function  $F$ on $\mathcal{X}$ with bounded support, and for any $\delta>0$ and $T>0$, there is $(Y^{\e}, Z^{\e})\in \mathcal{M}^{\e}$ such that 
\begin{equation}
\limsup_{\e\rightarrow 0}\mathbb{E}\left[\sup_{\tau\in [0, T]}
\left|Y^{\e}(\tau)-F(X^{\e}(\tau))\right|\right]<\delta
\end{equation}
and 
\begin{equation}
\sup_{\e}\mathbb{E}\left[ \|Z^{\e}\|_{L^{p}(0, T)} \right]<\infty  \quad \text{for some } p\in (1, \infty]. 
\end{equation}
 Then $\{F(X^{\e})\}_{0<\e\leq 1}$ is tight in $C([0, \infty), \R)$.
\end{lemma}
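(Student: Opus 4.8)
The plan is to recognize the statement as a specialization of the Ethier--Kurtz semimartingale tightness criterion~\cite[Theorem~3.9.4]{EK86} and to establish it through an Aldous-type modulus-of-continuity estimate for the approximating real semimartingales~$Y^{\e}$. Since $F$ is bounded with bounded support, the random variables $F(X^{\e}(\tau))$ all lie in the fixed compact interval $[-\|F\|_{\infty},\|F\|_{\infty}]$, so marginal tightness (compact containment in~$\R$) is automatic, and the only issue is a uniform-in-$\e$ control of oscillations over short time windows. Tightness in $C([0,\infty),\R)$ will then follow once we verify that
\begin{equation*}
\lim_{h\to0}\;\limsup_{\e\to0}\;\sup_{\sigma}\mathbb{E}\big[|Y^{\e}((\sigma+h)\wedge T)-Y^{\e}(\sigma)|\wedge1\big]=0,
\end{equation*}
the supremum running over $\mathcal{F}^{\e}_{\tau}$-stopping times $\sigma\le T$.

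First I would use the approximation hypothesis to replace $F(X^{\e})$ by~$Y^{\e}$. Because $\limsup_{\e\to0}\mathbb{E}[\sup_{[0,T]}|Y^{\e}-F(X^{\e})|]<\delta$ with $\delta>0$ arbitrary, any oscillation bound proved for~$Y^{\e}$ transfers to $F(X^{\e})$ up to an error controlled by~$\delta$ (through Markov's inequality), and letting $\delta\downarrow0$ along the approximating family removes it. Hence it suffices to prove the modulus estimate for the semimartingales~$Y^{\e}$ themselves.

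Next I would exploit the defining decomposition of $\mathcal{M}^{\e}$: writing $N^{\e}(\tau)=Y^{\e}(\tau)-\int_{0}^{\tau}Z^{\e}(s)\,ds$, the process~$N^{\e}$ is an $\mathcal{F}^{\e}_{\tau}$-martingale while $A^{\e}(\tau)=\int_{0}^{\tau}Z^{\e}(s)\,ds$ has finite variation. For stopping times $\sigma\le\tau\le(\sigma+h)\wedge T$ the finite-variation part is handled directly by Hölder's inequality in time together with the hypothesis $\sup_{\e}\mathbb{E}\|Z^{\e}\|_{L^{p}(0,T)}<\infty$, giving $\mathbb{E}|A^{\e}(\tau)-A^{\e}(\sigma)|\le h^{1-1/p}\,\mathbb{E}\|Z^{\e}\|_{L^{p}(0,T)}\le Ch^{1-1/p}$, which vanishes as $h\to0$ uniformly in~$\e$ since $p>1$. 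For the martingale part, optional sampling gives $\mathbb{E}[N^{\e}(\tau)-N^{\e}(\sigma)\mid\mathcal{F}^{\e}_{\sigma}]=0$, so the conditional mean of the $Y^{\e}$-increment equals $\mathbb{E}[\int_{\sigma}^{\tau}Z^{\e}\,ds\mid\mathcal{F}^{\e}_{\sigma}]$ and is again bounded by the same $L^{p}$ estimate; combined with the uniform $L^{1}$ integrability granted by $Y^{\e},Z^{\e}\in\mathfrak{L}^{\e}$ and Doob's inequality for~$N^{\e}$, this is fed into the Aldous--Kurtz criterion to produce the required oscillation bound.

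I expect the genuine obstacle to be precisely this last step: converting the control of the \emph{conditional} increment of~$Y^{\e}$ (which is all that the martingale property and the $L^{p}$ drift bound supply directly) into a bound on the increment itself, uniformly in~$\e$, without access to a quadratic-variation estimate for~$N^{\e}$. This conversion is the technical heart of~\cite[Theorem~3.9.4]{EK86}, and it is what forces the hypotheses to be phrased in terms of the pair $(Y^{\e},Z^{\e})\in\mathcal{M}^{\e}$ rather than directly in terms of $F(X^{\e})$. Once the criterion is invoked, the triangle inequality combines the martingale and finite-variation estimates to close the argument; the resulting real-valued tightness is then exactly what Lemma~\ref{lem:tight} requires, in conjunction with the compact containment from Theorem~\ref{thm:tight-T}, to lift tightness to the $L^{2}(K)$-valued family~$\{u^{\e}\}$.
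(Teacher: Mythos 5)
The paper gives no proof of this lemma at all---it is quoted directly as \cite[Theorem~3.9.4]{EK86}---and your proposal correctly identifies that source and ultimately rests on the same citation for the decisive step, so the approach is essentially identical. One caveat worth noting: the actual Ethier--Kurtz argument closes the martingale part not by an Aldous-type first-conditional-moment estimate but by controlling $\mathbb{E}[(Y^{\e}(\tau+h)-Y^{\e}(\tau))^{2}\mid\mathcal{F}^{\e}_{\tau}]$ through the algebra structure of the test functions (the hypothesis is applied to $F^{2}$ as well as to $F$); since you explicitly flag this conversion as the technical heart and defer it to the citation, your sketch is consistent with the paper's treatment rather than a gap relative to it.
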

By Remark~\ref{rem:tight} we just need to show the tightness of  the following family of real valued processes~\cite{Kushner}:
\begin{equation*}
 \{f(\langle u^{\e}, \varphi\rangle)\}_{0<\e\leq 1}
\end{equation*}
for any $\varphi\in\mathcal{D}(\R)$ and twice differentiable compactly supported functions~$f$.
From the \textsc{rde}~(\ref{e:rBurgers})  
\begin{eqnarray}\label{e:f}
&&f(\langle u^{\e}(\tau), \varphi \rangle )-
f(\langle u^{\e}_{0}, \varphi \rangle )\\&=&\int_{0}^{\tau}f'(\langle u^{\e}(s), \varphi \rangle)\langle u^{\e}(s), \mathcal{L}\varphi \rangle \,ds-\int_{0}^{\tau}f'(\langle u^{\e}(s), \varphi\rangle)\langle u^{\e}(s)u^{\e}_{\xi}(s), \varphi \rangle \,ds\nonumber\\
&&{}+\frac{1}{\sqrt{\e}}\int_{0}^{\tau}f'(\langle u^{\e}(s), \varphi\rangle)\langle (u^{\e}(s)\bar{\eta}^{\e}(s))_{\xi}, \varphi\rangle \,ds\,.\nonumber
\end{eqnarray}
One can see that the singular term in the above equation is difficult to treat.  To treat this term  we follow a perturbation approach developed by Kushner~\cite{Kushner}.  Let $\mathcal{F}_{\tau}^{\e}$ be the $\sigma$-algebra generated by $\{\bar{\eta}^{\e}(s): 0\leq s\leq \tau\}$. Then introduce the process 
\begin{equation} 
F^{\e}_{1}(\tau)=\frac{1}{\sqrt{\e}}\mathbb{E}\left[ \int_{\tau}^{\infty}f'(\langle u^{\e}(\tau), \varphi\rangle)\langle (u^{\e}(\tau)\bar{\eta}^{\e}(s))_{\xi}, \varphi\rangle ds \,\Big|\,\mathcal{F}_{\tau}^{\e}\right].
\end{equation}
For the process $F_{1}^{\e}(\tau)$ the following lemma holds. 
\begin{lemma}\label{lem:F-1}
Assume Assumption~\ref{ass:a} holds.  Then 
\begin{equation}\label{e:F-1}
F_{1}^{\e}(\tau)=\sqrt{\e}f'(\langle u^{\e}(\tau),\varphi \rangle)\langle (u^{\e}(\tau)\bar{\eta}^{\e}(\tau))_{\xi}, \varphi\rangle\,.
\end{equation}
Furthermore, 
\begin{equation*}
\mathbb{E}|F_{1}^{\e}(\tau)|\leq C\sqrt{\e}
\end{equation*}
for some positive constant~$C$, and  for any $T>0$ 
\begin{equation*}
\mathbb{E}\sup_{0\leq \tau\leq T}|F_{1}^{\e}(\tau)|\rightarrow 0 \quad \text{as } 
\e\rightarrow 0\,.  
\end{equation*} \end{lemma}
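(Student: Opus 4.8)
The plan is to first establish the closed-form identity~\eqref{e:F-1} by directly evaluating the conditional expectation, and then to read off both estimates from it. For the identity, observe that $u^{\e}(\tau)$, and hence the scalar $f'(\langle u^{\e}(\tau),\varphi\rangle)$, is $\mathcal{F}_{\tau}^{\e}$-measurable, so both factors pull out of the conditional expectation. The only randomness left to average is $\bar{\eta}^{\e}(s)$ for $s\geq\tau$, and since $\bar{\eta}^{\e}$ is a (time-rescaled) stationary Ornstein--Uhlenbeck process, its Markov prediction property~\eqref{e:con-exp} gives $\mathbb{E}[\bar{\eta}^{\e}(s)\mid\mathcal{F}_{\tau}^{\e}]=\bar{\eta}^{\e}(\tau)\exp(-(s-\tau)/\e)$ for $s\geq\tau$. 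By linearity of the pairing $\langle(u^{\e}(\tau)\,\cdot\,)_{\xi},\varphi\rangle$ in its argument, I move the conditional expectation inside and factor out $\exp(-(s-\tau)/\e)$ in front of $\langle(u^{\e}(\tau)\bar{\eta}^{\e}(\tau))_{\xi},\varphi\rangle$. Carrying out the elementary integral $\int_{\tau}^{\infty}\exp(-(s-\tau)/\e)\,ds=\e$ and combining with the prefactor $1/\sqrt{\e}$ leaves exactly $\sqrt{\e}$, which is~\eqref{e:F-1}.

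For the pointwise bound I start from~\eqref{e:F-1}, use that $f'$ is bounded because $f$ is compactly supported, and integrate by parts so that the derivative falls on the fixed smooth test function: $\langle(u^{\e}\bar{\eta}^{\e})_{\xi},\varphi\rangle=-\langle u^{\e}\bar{\eta}^{\e},\psi_{\varphi}\rangle$, where $\psi_{\varphi}:=\varphi_{\xi}+\varphi K_{\xi}/K$ is compactly supported (the boundary term vanishes) and hence lies in $L^{2}(K)$ with norm depending only on $\varphi$ and $\nu$. Cauchy--Schwarz together with the bound $\|u^{\e}(\tau)\|_{L^{\infty}(\R)}\leq m$ then gives $|\langle(u^{\e}\bar{\eta}^{\e})_{\xi},\varphi\rangle|\leq m\|\psi_{\varphi}\|_{L^{2}(K)}\|\bar{\eta}^{\e}(\tau)\|_{L^{2}(K)}$. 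Since $\bar{\eta}^{\e}$ is stationary, $\mathbb{E}\|\bar{\eta}^{\e}(\tau)\|_{L^{2}(K)}^{2}=\tfrac12\operatorname{Tr}Q<\infty$ is independent of $\tau$ and $\e$ by~\eqref{e:q} and~\eqref{e:traceQ}, so taking expectations yields $\mathbb{E}|F_{1}^{\e}(\tau)|\leq C\sqrt{\e}$, with $C$ depending only on $\|f'\|_{\infty}$, $m$, $\varphi$, $\nu$ and $\operatorname{Tr}Q$.

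The uniform-in-time estimate is the delicate part, and I expect it to be the main obstacle. The same pairing bound gives $\sup_{0\leq\tau\leq T}|F_{1}^{\e}(\tau)|\leq\sqrt{\e}\,\|f'\|_{\infty}\,m\,\|\psi_{\varphi}\|_{L^{2}(K)}\sup_{0\leq\tau\leq T}\|\bar{\eta}^{\e}(\tau)\|_{L^{2}(K)}$, but the running maximum of $\bar{\eta}^{\e}$ is no longer $\e$-independent: writing $\bar{\eta}^{\e}(\tau)=\bar{\eta}(\tau/\e)$ turns it into $\sup_{0\leq s\leq T/\e}\|\bar{\eta}(s)\|_{L^{2}(K)}$, a supremum over a window that lengthens as $\e\to0$. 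To control its growth I cover $[0,T/\e]$ by $N\sim T/\e$ unit subintervals and estimate the maximum over the $N$ blocks in $L^{p}$: since $\bar{\eta}$ is a stationary Gaussian process, $\mathbb{E}\sup_{j\leq s\leq j+1}\|\bar{\eta}(s)\|_{L^{2}(K)}^{p}$ is finite and independent of $j$ for every $p$, whence $\mathbb{E}\max_{0\leq j\leq N}\sup_{j\leq s\leq j+1}\|\bar{\eta}(s)\|_{L^{2}(K)}\leq(\sum_{j}\mathbb{E}\sup_{j\leq s\leq j+1}\|\bar{\eta}\|_{L^{2}(K)}^{p})^{1/p}\leq C_{p}N^{1/p}\leq C_{p}'\,\e^{-1/p}$. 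Choosing any $p>2$ then gives $\mathbb{E}\sup_{0\leq\tau\leq T}|F_{1}^{\e}(\tau)|\leq C\,\e^{1/2}\e^{-1/p}=C\,\e^{1/2-1/p}\to0$ as $\e\to0$. The real work lies in obtaining the finite blockwise $p$-th moment, for instance through a maximal inequality for the Ornstein--Uhlenbeck process on a unit interval (via its Gaussian tails or a Kolmogorov-type continuity estimate), and in checking that the slow $\e^{-1/p}$ growth is beaten by the $\sqrt{\e}$ prefactor for $p>2$.
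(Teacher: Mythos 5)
Your proof is correct, and for the first two claims it follows essentially the paper's route: the identity~(\ref{e:F-1}) comes from pulling the $\mathcal{F}_{\tau}^{\e}$-measurable factors out of the conditional expectation, applying the Ornstein--Uhlenbeck prediction formula~(\ref{e:con-exp}) (which you correctly read as $\mathbb{E}[\bar{\eta}^{\e}(s)\mid\mathcal{F}_{\tau}^{\e}]=\bar{\eta}^{\e}(\tau)e^{-(s-\tau)/\e}$ for $s\geq\tau$, fixing the paper's garbled indices), and integrating $e^{-(s-\tau)/\e}$ over $[\tau,\infty)$; the pointwise bound is the same integration by parts onto $\varphi_{\xi}+\varphi K_{\xi}/K$ followed by Cauchy--Schwarz, the $L^{\infty}$ bound $\|u^{\e}\|_{L^{\infty}}\leq m$, and stationarity of $\bar{\eta}^{\e}$.

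Where you genuinely diverge is the uniform-in-time estimate. The paper simply cites a maximal inequality for stochastic convolutions to assert $\mathbb{E}\sup_{0\leq\tau\leq T}\|\bar{\eta}^{\e}(\tau)\|^{2}_{L^{2}(K)}\leq C_{T}$ and concludes in one line; as written it is silent about how $C_{T}$ depends on $\e$, which is the whole point, since $\bar{\eta}^{\e}(\tau)=\bar{\eta}(\tau/\e)$ runs over a window of length $T/\e$ and a naive application of the convolution maximal inequality to the fast equation produces a constant of order $1/\e$, which would not suffice against the $\sqrt{\e}$ prefactor. Your blocking argument --- covering $[0,T/\e]$ by $N\sim T/\e$ unit intervals, using stationarity and Gaussian (Fernique/Borell--TIS) tails to get a $j$-independent $p$-th moment of the blockwise supremum, and paying only $N^{1/p}\sim\e^{-1/p}$ for the maximum over blocks --- makes the $\e$-dependence explicit and shows the growth is beaten by $\sqrt{\e}$ once $p>2$. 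This is more work than the paper's one-line citation but is self-contained and closes a gap the paper leaves implicit; the paper's route, when made precise, would rely on the fact that the supremum of a stationary Gaussian process over a window of length $T/\e$ in fact grows only logarithmically, which is a sharper input than your union bound but requires more care to justify.
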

\begin{proof}
The equality~(\ref{e:F-1}) is implied by~(\ref{e:con-exp}) and the property of conditional expectation. Then by the $L^{\infty}$~bound on~$u^{\e}$ and the estimates on~$\bar{\eta}^{\e}$,  
\begin{eqnarray*}
\mathbb{E}|F_{1}^{\e}(\tau)|&\leq& \sqrt{\e}\|f'\|_{L^{\infty}(\R)}\|u^{\e}(\tau)\|_{L^{\infty}(\R)}\mathbb{E}\|\bar{\eta}^\e(\tau)\|_{L^{2}(K)}\big\|\varphi_{\xi}+\tfrac12\xi\varphi\big\|_{L^{2}(K)}\\
&\leq&C\sqrt{\e}
\end{eqnarray*}   
for some positive constant~$C$.  Further, by 
the maximal estimate on stochastic integral~\cite[Lemma~7.2]{Prato}, for any $T>0$
\begin{equation*}
\mathbb{E}\sup_{0\leq \tau\leq T}\|\bar{\eta}^{\e}(\tau)\|^{2}_{L^{2}(K)}\leq C_{T}
\end{equation*}
for some positive constant~$C_{T}$. Then by~(\ref{e:F-1}) 
\begin{equation*}
\mathbb{E}\sup_{0\leq \tau\leq T}|F_{1}^{\e}(\tau)|\rightarrow 0 \quad \text{as } 
\e\rightarrow 0\,.
\end{equation*}
The proof is complete.
\end{proof}
To apply Lemma~\ref{lem:f-tight}, we first construct $(Y^{\e}, Z^{\e})\in \mathfrak{L}^{\e}$.   For this we introduce the operator~$A^{\e}$ defined by
\begin{equation}\label{A-eps}
A^{\e}f(\tau)=\mathbb{P}-\lim_{\delta\rightarrow 0}\tfrac{1}{\delta}\left[\mathbb{E}f(\tau+\delta)-f(\tau)\mid\mathcal{F}_{\tau}^{\e}\right]
\end{equation}
for any $\mathcal{F}^{\e}_{\tau}$ measurable function $f$ with $\sup_{\tau}\mathbb{E}|f(\tau)|<\infty$\,.
Then Ethier and Kurtz's proposition~\cite[Proposition~2.7.6]{EK86} yields that 
\begin{equation*}
f(\tau)-\int_{0}^{\tau}A^{\e}f(s)\,ds
\end{equation*}
is a martingale with respect to~$\mathcal{F}^{\e}_{\tau}$.
Now define  $(Y^{\e}, Z^{\e})$ as 
\begin{equation*}
Y^{\e}(\tau)=f(\langle u^{\e}(\tau), \varphi\rangle)+F_{1}^{\e}(\tau),\quad Z^{\e}(\tau)=A^{\e}Y^{\e}(\tau).
\end{equation*}
Then we establish the following lemma. 
\begin{lemma}\label{lem:Z}
\begin{eqnarray*}
Z^{\e}(\tau)&=&f'(\langle u^{\e}(\tau), \varphi\rangle) \langle u^{\e}(\tau), \mathcal{L}\varphi\rangle-f'(\langle u^{\e}(\tau), \varphi\rangle)\langle \tfrac12(u^{\e}(\tau))^{2}_{\xi}, \varphi\rangle
\\&&{}
+f''(\langle u^{\e}(\tau), \varphi\rangle)\langle u^{\e}(\tau)\bar{\eta}^{\e}(\tau), \varphi_{\xi}+\tfrac12\xi\varphi\rangle^{2}
\\&&{}
-f'(\langle u^{\e}(\tau), \varphi\rangle)\langle (u^{\e}(\tau)\bar{\eta}^{\e}(\tau))_{\xi}, (\varphi_{\xi}+\tfrac12\xi\varphi)\bar{\eta}^{\e}(\tau)\rangle
\\&&{}
+\sqrt{\e}f''(\langle u^{\e}(\tau), \varphi\rangle)[\langle u^{\e}(\tau), \mathcal{L}\varphi\rangle-\langle \tfrac12(u^{\e}(\tau))^{2}_{\xi}, \varphi\rangle]\langle (u^{\e}\eta^{\e})_{\xi}, \varphi \rangle
\\&&{}
-\sqrt{\e}f'(\langle u^{\e}(\tau), \varphi\rangle)\left[\langle u^{\e}(\tau), \mathcal{L}((\varphi_{\xi}+\tfrac12\xi\varphi)\bar{\eta}^{\e}(\tau))\rangle 
\right.\\&&\quad{}
-\left.\langle\tfrac12(u^{\e}(\tau))^{2}_{\xi}, (\varphi_{\xi}+\tfrac12\xi\varphi)\bar{\eta}^{\e}(\tau)\rangle\right].
\end{eqnarray*}
\end{lemma}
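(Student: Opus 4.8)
The plan is to compute $Z^{\e}=A^{\e}Y^{\e}$ directly from the definition~\eqref{A-eps}, splitting $Y^{\e}=f(\langle u^{\e},\varphi\rangle)+F_{1}^{\e}$ and using that $(u^{\e},\bar{\eta}^{\e})$ is Markov, so that $A^{\e}$ is its extended generator: the $u^{\e}$--component evolves pathwise (absolutely continuously in $\tau$) according to the \textsc{rde}~\eqref{e:rBurgers}, while the fast component $\bar{\eta}^{\e}$ is an Ornstein--Uhlenbeck process whose generator is $\e^{-1}$ times the generator of~\eqref{e:eta}. The whole purpose of the perturbation $F_{1}^{\e}$, in the form supplied by Lemma~\ref{lem:F-1}, is that applying $A^{\e}$ to it produces a term that exactly annihilates the singular $O(\e^{-1/2})$ contribution coming from $f(\langle u^{\e},\varphi\rangle)$, and I would organise the computation so that this cancellation is transparent. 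Throughout I write $\Phi:=\varphi_{\xi}+\tfrac12\xi\varphi$, so that integration by parts against the weight~$K$ gives $\langle(u^{\e}w)_{\xi},\varphi\rangle=-\langle u^{\e}w,\Phi\rangle$ for any~$w$.

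First I would treat $A^{\e}f(\langle u^{\e},\varphi\rangle)$. Since this functional depends on $\bar{\eta}^{\e}$ only through $u^{\e}$, the fast part of the generator contributes nothing and only the pathwise $\tau$--derivative survives, giving $A^{\e}f(\langle u^{\e},\varphi\rangle)=f'(\langle u^{\e},\varphi\rangle)\langle\dot u^{\e},\varphi\rangle$. Substituting $\dot u^{\e}=\mathcal{L}u^{\e}-u^{\e}u^{\e}_{\xi}+\e^{-1/2}(u^{\e}\bar{\eta}^{\e})_{\xi}$ from~\eqref{e:rBurgers}, and then using self-adjointness of~$\mathcal{L}$ together with $\langle u^{\e}u^{\e}_{\xi},\varphi\rangle=\langle\tfrac12((u^{\e})^{2})_{\xi},\varphi\rangle$, this becomes the first two (regular) terms of the claimed formula plus the singular term $\e^{-1/2}f'(\langle u^{\e},\varphi\rangle)\langle(u^{\e}\bar{\eta}^{\e})_{\xi},\varphi\rangle$.

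Next I would compute $A^{\e}F_{1}^{\e}$ for $F_{1}^{\e}=\sqrt{\e}\,f'(\langle u^{\e},\varphi\rangle)\langle(u^{\e}\bar{\eta}^{\e})_{\xi},\varphi\rangle=\sqrt{\e}\,\Psi(u^{\e},\bar{\eta}^{\e})$, splitting $A^{\e}$ into its slow and fast parts. Because $\Psi$ is \emph{linear} in $\bar{\eta}^{\e}$, the second-order (diffusion) part of the Ornstein--Uhlenbeck generator vanishes and its drift part returns $-\Psi$; hence the fast contribution is $\sqrt{\e}\cdot(-\e^{-1}\Psi)=-\e^{-1/2}f'(\langle u^{\e},\varphi\rangle)\langle(u^{\e}\bar{\eta}^{\e})_{\xi},\varphi\rangle$, which cancels exactly the singular term found above. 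The slow part is $\sqrt{\e}\,D_{u}\Psi[\dot u^{\e}]$, and expanding $D_{u}\Psi$ by the chain rule produces two groups, $\sqrt{\e}\,f''\langle\dot u^{\e},\varphi\rangle\langle(u^{\e}\bar{\eta}^{\e})_{\xi},\varphi\rangle$ and $\sqrt{\e}\,f'\langle(\dot u^{\e}\bar{\eta}^{\e})_{\xi},\varphi\rangle$. In each I would separate the $O(\e^{-1/2})$ part of $\dot u^{\e}$ from its regular part $\mathcal{L}u^{\e}-u^{\e}u^{\e}_{\xi}$: the singular part, multiplied by the $\sqrt{\e}$ prefactor, yields the two $O(1)$ terms of the statement — the quadratic term $f''\langle u^{\e}\bar{\eta}^{\e},\Phi\rangle^{2}$ (after writing $\langle(u^{\e}\bar{\eta}^{\e})_{\xi},\varphi\rangle=-\langle u^{\e}\bar{\eta}^{\e},\Phi\rangle$) and the term $-f'\langle(u^{\e}\bar{\eta}^{\e})_{\xi},\Phi\,\bar{\eta}^{\e}\rangle$ (after one further integration by parts) — while the regular part contributes the two remaining $O(\sqrt{\e})$ terms, namely $\sqrt{\e}f''[\langle u^{\e},\mathcal{L}\varphi\rangle-\langle\tfrac12((u^{\e})^{2})_{\xi},\varphi\rangle]\langle(u^{\e}\bar{\eta}^{\e})_{\xi},\varphi\rangle$ from the $f''$--group and, after self-adjointness of~$\mathcal{L}$, $-\sqrt{\e}f'[\langle u^{\e},\mathcal{L}(\Phi\,\bar{\eta}^{\e})\rangle-\langle\tfrac12((u^{\e})^{2})_{\xi},\Phi\,\bar{\eta}^{\e}\rangle]$ from the $f'$--group. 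Collecting all surviving terms gives precisely the stated expression for~$Z^{\e}$.

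The main obstacle I anticipate is the careful bookkeeping of this mixed slow--fast generator: one must justify that $A^{\e}$ acts on the $u^{\e}$--dependence by the pathwise chain rule (no It\^o correction, since $u^{\e}$ is absolutely continuous in~$\tau$) while acting on the $\bar{\eta}^{\e}$--dependence through the rescaled Ornstein--Uhlenbeck generator, and one must track precisely how the $\sqrt{\e}$ prefactor interacts with the $O(\e^{-1/2})$ advection term so that exactly two contributions survive at order~$O(1)$. Verifying the absence of any second-order Ornstein--Uhlenbeck term — which hinges on $\Psi$ being linear in $\bar{\eta}^{\e}$ — is the delicate point that makes the singular cancellation clean; equivalently, one may compute $A^{\e}F_{1}^{\e}$ directly from the conditional-expectation definition of $F_{1}^{\e}$ together with~\eqref{e:con-exp}, which avoids invoking the infinite-dimensional Ornstein--Uhlenbeck generator explicitly.
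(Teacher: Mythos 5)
Your proposal is correct and follows essentially the same route as the paper: Kushner's perturbed test function method, computing $A^{\e}$ on $f(\langle u^{\e},\varphi\rangle)$ via the pathwise chain rule from the \textsc{rde}, and on $F_{1}^{\e}$ so that the Ornstein--Uhlenbeck relaxation produces the $-\e^{-1/2}f'\langle(u^{\e}\bar{\eta}^{\e})_{\xi},\varphi\rangle$ term cancelling the singular contribution, with the remaining $O(1)$ and $O(\sqrt{\e})$ terms arising exactly as you describe. The paper implements the $\bar{\eta}^{\e}$-part by differencing the conditional expectation $\mathbb{E}[F_{1}^{\e}(\tau+\delta)\mid\mathcal{F}_{\tau}^{\e}]$ using~(\ref{e:con-exp}) rather than invoking the OU generator abstractly, which is precisely the equivalent formulation you note at the end of your plan.
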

\begin{proof}
By~(\ref{e:f}),   
\begin{eqnarray*}
A^{\e}f(\langle u^{\e}(\tau), \varphi)
&=&f'(\langle u^{\e}(\tau), \varphi \rangle)\langle u^{\e}(\tau), \mathcal{L}\varphi \rangle \\&&{} -f'(\langle u^{\e}(\tau), \varphi\rangle)\langle u^{\e}(\tau)u^{\e}_{\xi}(\tau), \varphi \rangle \\&&{} +\tfrac{1}{\sqrt{\e}}f'(\langle u^{\e}(\tau), \varphi\rangle)\langle (u^{\e}(\tau)\bar{\eta}^{\e}(\tau))_{\xi}, \varphi\rangle\,.
\end{eqnarray*}
Now consider $A^{\e}F_{1}^{\e}$. By~(\ref{e:F-1}) and the construction of $\bar{\eta}^{\e}$\,,  
\begin{eqnarray*}
&&\mathbb{E}[F_{1}^{\e}(\tau+\delta) \mid \mathcal{F}_{\tau}^{\e}]
\\&=&-\sqrt{\e}\mathbb{E}\big\{[f'(\langle u^{\e}(\tau+\delta), \varphi\rangle)-f'(\langle u^{\e}(\tau), \varphi\rangle)]
\\&&{}
\times\langle u^{\e}(\tau+\delta)\bar{\eta}^{\e}(\tau+\delta), \varphi_{\xi}+\tfrac12\xi\varphi\rangle \mid \mathcal{F}_{\tau}^{\e}\big\}
\\&&{}
-\sqrt{\e}f'(\langle u^{\e}(\tau), 
\varphi\rangle)\mathbb{E}[\langle (u^{\e}(\tau+\delta)-u^{\e}(\tau))\bar{\eta}^{\e}(\tau+\delta), \varphi_{\xi}+\tfrac12\xi\varphi\rangle \mid \mathcal{F}_{\tau}^{\e}]
\\&&{}
-\sqrt{\e}f'(\langle u^{\e}(\tau), \varphi\rangle)\langle u^{\e}(\tau)\bar{\eta}^{\e}(\tau)e^{-\delta/\e}, \varphi_{\xi}+\tfrac12\xi\varphi\rangle\,.
\end{eqnarray*}
Then  
\begin{eqnarray*}
A^{\e}F_{1}^{\e}(\tau)&=&f''(\langle u^{\e}(\tau), \varphi\rangle)\langle u^{\e}(\tau)\bar{\eta}^{\e}(\tau), \varphi_{\xi}+\tfrac12\xi\varphi\rangle^{2} 
\\&&{}
+\sqrt{\e}f''(\langle u^{\e}(\tau), \varphi\rangle)[\langle u^{\e}(\tau), \mathcal{L}\varphi\rangle-\langle \tfrac12(u^{\e}(\tau))^{2}_{\xi}, \varphi\rangle]\langle (u^{\e}(\tau)\eta^{\e}(\tau))_{\xi}, \varphi  \rangle
\\&&{}
-\frac{1}{\sqrt{\e}}f'(\langle u^{\e}(\tau), \varphi\rangle)\langle (u^{\e}(\tau)\bar{\eta}^{\e}(\tau))_{\xi}, \varphi\rangle
\\&&{}
-f'(\langle u^{\e}, \varphi\rangle)\langle (u^{\e}(\tau)\bar{\eta}^{\e}(\tau))_{\xi}, (\varphi_{\xi}+\tfrac12\xi\varphi)\bar{\eta}^{\e}(\tau)\rangle
\\&&{}
-\sqrt{\e}f'(\langle u^{\e}(\tau), \varphi\rangle)\left[\langle u^{\e}(\tau), \mathcal{L}((\varphi_{\xi}+\tfrac12\xi\varphi)\bar{\eta}^{\e}(\tau))\rangle 
\right.\\&&\left.{}
-\langle\tfrac12(u^{\e}(\tau))^{2}_{\xi}, (\varphi_{\xi}+\tfrac12\xi\varphi)\bar{\eta}^{\e}(\tau)\rangle\right]\,.
\end{eqnarray*}
This completes the proof. 
\end{proof}
Now by the above construction of $(Y^{\e}, Z^{\e})$, 
\begin{equation*}
Y^{\e}(\tau)-f(\langle u^{\e}(\tau), \varphi\rangle)=-F_{1}^{\e}(\tau).
\end{equation*}
Then by Lemma~\ref{lem:F-1},  
\begin{equation*}
\lim_{\e\rightarrow 0}\mathbb{E}\sup_{0\leq \tau\leq T}|F_{1}^{\e}(\tau)|=0\,. 
\end{equation*}
Furthermore, by the $L^{\infty}(\R)$ estimates on~$u^{\e}(\tau)$, 
\begin{equation}
\sup_{\e}\mathbb{E}\|Z^{\e}(\tau)\|_{L^{2}(0, T)}<\infty\,.
\end{equation}
By Lemma~\ref{lem:f-tight}, we thus deduce the following theorem. 
\begin{theorem}
Assume Assumption~\ref{ass:a} holds, and $u^{\e}_{0}\in L^{2}(\Omega, L^{2}(K)\cap L^{\infty}(\R))$ satisfies~(\ref{e:ini-mM}), the family of distributions of  processes $\{u^{\e}\}_{0<\e\leq 1}$ is tight in the space~$C([0, \infty), L^{2}(K))$.
\end{theorem}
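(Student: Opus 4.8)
The plan is to deduce tightness on $[0,\infty)$ from tightness on finite intervals via the abstract criterion in Lemma~\ref{lem:tight}, with Polish space $\mathcal{X}=L^2(K)$ and $X^\e=u^\e$. The compact-set precondition~(\ref{e:tight-condition}) follows from Theorem~\ref{thm:tight-T}: tightness of $\{u^\e\}$ in $C(0,T;L^2(K))$ provides, for each $\delta>0$ and $T>0$, a compact set of paths carrying probability at least $1-\delta$, and the pointwise image of such a path set is relatively compact in $L^2(K)$, so its closure serves as $\Gamma_{\delta,T}$, uniformly in $0<\e\leq1$. Hence it remains to prove that $\{F(u^\e)\}_{0<\e\leq1}$ is tight in $C([0,\infty),\R)$ for every $F\in C_b(L^2(K))$.

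By Remark~\ref{rem:tight} it suffices to establish this tightness for $F$ ranging over a dense subset of $C_b(L^2(K))$ in the topology of uniform convergence on compact sets. I would take the class $F(u)=f(\langle u,\varphi\rangle)$ with test functions $\varphi\in\mathcal{D}(\R)$ and $f$ twice continuously differentiable with compact support. For such $F$ the tightness of $\{f(\langle u^\e,\varphi\rangle)\}$ is obtained from Lemma~\ref{lem:f-tight}, whose hypotheses require a pair $(Y^\e,Z^\e)\in\mathcal{M}^\e$ that approximates $F(u^\e)$ uniformly on $[0,T]$ and satisfies $\sup_\e\mathbb{E}\|Z^\e\|_{L^p(0,T)}<\infty$ for some $p\in(1,\infty]$.

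The pair is produced by Kushner's perturbed test function method: set $Y^\e(\tau)=f(\langle u^\e(\tau),\varphi\rangle)+F_1^\e(\tau)$ and $Z^\e(\tau)=A^\e Y^\e(\tau)$, with $F_1^\e$ and $A^\e$ as defined in~(\ref{A-eps}). Lemma~\ref{lem:F-1} gives $\mathbb{E}\sup_{0\leq\tau\leq T}|Y^\e(\tau)-f(\langle u^\e(\tau),\varphi\rangle)|=\mathbb{E}\sup_{0\leq\tau\leq T}|F_1^\e(\tau)|\to0$ as $\e\to0$, which is the first hypothesis of Lemma~\ref{lem:f-tight}. For the second, Lemma~\ref{lem:Z} supplies the explicit expression for $Z^\e=A^\e Y^\e$; every term there is controlled by the uniform bound $\|u^\e(\tau)\|_{L^\infty(\R)}\leq m$, the finite-interval moment estimate~(\ref{e:estimates}), and the second-moment bounds on $\bar\eta^\e$ recorded in Lemma~\ref{lem:F-1}, so that $\sup_\e\mathbb{E}\|Z^\e\|_{L^2(0,T)}<\infty$. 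Lemma~\ref{lem:f-tight} then yields tightness of $\{f(\langle u^\e,\varphi\rangle)\}$ in $C([0,\infty),\R)$, and Lemma~\ref{lem:tight} completes the proof.

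The main obstacle, and the reason the naive martingale problem for $f(\langle u^\e,\varphi\rangle)$ alone fails, is the singular advection $\tfrac{1}{\sqrt{\e}}(u^\e\bar\eta^\e)_\xi$: a direct computation of $A^\e f(\langle u^\e,\varphi\rangle)$ contains the divergent term $\tfrac{1}{\sqrt\e}f'(\langle u^\e,\varphi\rangle)\langle(u^\e\bar\eta^\e)_\xi,\varphi\rangle$, with no uniform bound as $\e\to0$. The corrector $F_1^\e$ is chosen precisely so that, through the conditional-expectation identity~(\ref{e:con-exp}) for the Ornstein--Uhlenbeck process $\bar\eta^\e$, the term $A^\e F_1^\e$ carries the exactly opposite singular contribution $-\tfrac{1}{\sqrt\e}f'(\langle u^\e,\varphi\rangle)\langle(u^\e\bar\eta^\e)_\xi,\varphi\rangle$; the two cancel in $Z^\e=A^\e Y^\e$, leaving only $O(1)$ and $O(\sqrt\e)$ terms. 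Verifying this cancellation and then bounding the surviving $O(1)$ diffusion term $f''(\langle u^\e,\varphi\rangle)\langle u^\e\bar\eta^\e,\varphi_\xi+\tfrac12\xi\varphi\rangle^2$ uniformly in $\e$ is the delicate point, and it is exactly what Lemmas~\ref{lem:F-1} and~\ref{lem:Z} accomplish.
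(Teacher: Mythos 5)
Your proposal is correct and follows essentially the same route as the paper: verify the precondition~(\ref{e:tight-condition}) of Lemma~\ref{lem:tight} via Theorem~\ref{thm:tight-T}, reduce by Remark~\ref{rem:tight} to test functions $F(u)=f(\langle u,\varphi\rangle)$, and apply Kushner's perturbed test function construction $Y^{\e}=f(\langle u^{\e},\varphi\rangle)+F_{1}^{\e}$, $Z^{\e}=A^{\e}Y^{\e}$ together with Lemmas~\ref{lem:F-1}, \ref{lem:Z} and~\ref{lem:f-tight}. Your identification of the cancellation of the singular $\tfrac{1}{\sqrt{\e}}$-term between $A^{\e}f(\langle u^{\e},\varphi\rangle)$ and $A^{\e}F_{1}^{\e}$ as the crux of the argument is exactly the mechanism the paper relies on.
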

\begin{remark}\label{rem:stat-bound}
By~(\ref{e:u-bar-eps}), all discussions in this section and section~\ref{sec:estimates} hold for~$\bar{u}^{\e}$. So the above tightness result also holds for $\{\bar{u}^{\e}\}_{0<\e \leq 1}$\,.
\end{remark}

 
\subsection{Pass to the limit $\e\rightarrow 0$}
We show the convergence of $u^{\e}$ as $\e\rightarrow 0$ and determine the limit. For this we introduce a limit martingale problem and show any accumulation point of $\{u^{\e}\}$ is a solution to this martingale problem.  By the convergence result of Walsh~\cite[Theorem~6.15]{Walsh86}, we just need to consider finite dimensional distributions of~$\{\langle u^{\e}, \varphi_{1}\rangle, 
\ldots,\langle u^{\e}, \varphi_{n}\rangle \}$ 
for any $\varphi_{1}, \ldots, \varphi_{n}\in\mathcal{D}(\R)$;
that is, we just  pass to the limit $\e\rightarrow 0$ in the  following equality 
\begin{equation}
 \mathbb{E}\left\{\left[Y^{\e}(\tau)-Y^{\e}(s)-\int_{s}^{\tau} Z^{\e}(r)\,dr\right]h(\langle u^{\e}(r_{1}), \varphi_{1}\rangle, 
\ldots,\langle u^{\e}(r_{n}), \varphi_{n}\rangle) \right\}=0
\end{equation}
for any bounded continuous function $h$ and $0<r_{1}<\cdots<r_{n}<T$ with any $T>0$\,.
Denote by $u$ one limit point in the sense of distribution   of~$u^{\e}$ as $\e\rightarrow 0$ in the space~$C([0, \infty), L^{2}(K))$.  Notice that we can not  have the limit   $\lim_{\e\rightarrow 0}f(\langle u^{\e}(\tau), \varphi\rangle)=f(\langle u(\tau), \varphi\rangle)$ just with the convergence of~$u^{\e}$ to~$u$ in distribution. 
 However,  by the Skorohod theorem we construct new probability space and new variables  without changing distributions in~$C([0, \infty), L^{2}(K))$
(for simplicity we do not introduce new notation)   such that 
$u^{\e}$~almost surely converges to~$u$ in the space~$C([0, \infty); L^{2}(K))$. 
Then by the estimates in Lemma~\ref{lem:F-1} and the construction  of~$Y^{\e}$,  the limit of $Y^{\e}(\tau)-Y^{\e}(s)$ is $f(\langle u(\tau), \varphi \rangle)-f(\langle u(s), \varphi \rangle)$. 

Now we treat the integral term. First  denote by 
$Z_{k}^{\e}(\cdot)$, $k=1,2,3,4$\,, the first four terms of~$Z^{\e}(\cdot)$ and by $Z_{5}^{\e}(\cdot)$ the last two terms in~$Z^{\e}(\cdot)$. Then,  in distribution as $\e\rightarrow 0$\,, 
\begin{equation*}
\int_{s}^{\tau}Z_{1}^{\e}(r)\,dr\rightarrow \int_{s}^{\tau} f'(\langle u(r), \varphi \rangle)\langle u(r), \mathcal{L}\varphi\rangle\,dr\,,
\end{equation*}
and by the estimates on~$u^{\e}$ in section~\ref{sec:estimates} and estimates on~$\bar{\eta}^{\e}$ in section~\ref{sec:pre},
\begin{equation*}
\mathbb{E}\int_{s}^{\tau}|Z_{5}^{\e}(r)|dr\rightarrow 0\,.
\end{equation*}
 
Notice that $(u^{\e})^{2}$ is bounded in $L^{2}(0, T; L^{2}(K))$ for any $T>0$ and  by the tightness  of~$u^{\e}$ in the space~$C(0, T; L^{2}(K))$, $u^{\e}$~converges almost everywhere to~$u$ on~$[0, T]\times \R$, then  by the $L^{\infty}(\R)$~bound on~$u^{\e}$ and~$u$,  we have in distribution for any $T>0$
\begin{equation}\label{e:u-2}
\langle (u^{\e})^{2}, \varphi\rangle \rightarrow \langle u^{2},\varphi\rangle \quad \text{as } \e\rightarrow 0\,. 
\end{equation}
So in distribution as $\e\rightarrow 0$
\begin{equation*}
\int_{s}^{\tau}Z_{2}^{\e}(r)dr\rightarrow \int_{s}^{\tau} f'(\langle u(r), \varphi\rangle)\langle u(r)u_{\xi}(r), \varphi\rangle dr.
\end{equation*}
Next we treat terms~$Z_{3}^{\e}$ and~$Z_{4}^{\e}$. For any $u\in L^{2}(K) $ define the bilinear operator~$\Sigma(u) $ such that
\begin{equation}
\langle \Sigma(u)\varphi, \varphi\rangle= \int_{\R}\int_{\R}u(\xi)u(\zeta)q(\xi, \zeta)(\varphi(\xi)K(\xi))_{\xi}(\varphi(\zeta)K(\zeta))_{\zeta}\,d\xi \, d\zeta
\end{equation}
for any $\varphi\in \mathcal{D}(\R)$, and define the linear operator 
\begin{eqnarray}
\langle A(u), \varphi\rangle &=&\tfrac12\int_{\R}u(\xi)q(\xi, \xi)(\varphi(\xi) K(\xi))_{\xi\xi}\,d\xi\\&&{}+\tfrac12\int_{\R}u(\xi)q'(\xi, \xi)(\varphi(\xi)K(\xi))_{\xi}\,d\xi\,.\nonumber
\end{eqnarray} 
For this operator~$\Sigma$ the following lemma holds. 
\begin{lemma}\label{lem:uW}
For any $u\in H^{1}(K)$, let $B(\tau, \xi)=(u(\xi)W(\tau, \xi))_{\xi}$\,, then $B$ is an $L^{2}(K)$~valued Wiener process with the covariation operator~$\Sigma(u)$.
\end{lemma}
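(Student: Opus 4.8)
The plan is to exploit the series representation~(\ref{e:W}) of~$W$ and to recognize~$B$ as the image of~$W$ under the fixed linear map $h\mapsto(uh)_{\xi}$. Substituting~(\ref{e:W}) gives the formal expansion
\begin{equation*}
B(\tau,\xi)=\sum_{k=0}^{\infty}\sqrt{q_{k}}\,(u e_{k})_{\xi}(\xi)\,\beta_{k}(\tau),
\end{equation*}
where the $\beta_{k}$ are independent standard Brownian motions. Each summand is a fixed element of~$L^{2}(K)$ scaled by an independent Brownian motion, so once the series is shown to converge in $L^{2}(\Omega,L^{2}(K))$ uniformly on bounded time intervals, the partial sums form a sequence of $L^{2}(K)$-valued martingales with independent Gaussian increments and almost surely continuous paths, and the limit~$B$ inherits exactly these properties. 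Hence $B$ is an $L^{2}(K)$-valued Wiener process, and it only remains to identify its covariance.

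To compute the covariance I would pair~$B$ against test functions $\varphi,\psi\in\mathcal{D}(\R)$ in the $L^{2}(K)$ inner product and integrate by parts, moving the $\xi$-derivative off~$u e_{k}$ and onto~$(\varphi K)_{\xi}$, so that
\begin{equation*}
\langle B(\tau),\varphi\rangle=-\sum_{k}\sqrt{q_{k}}\,\beta_{k}(\tau)\int_{\R}u e_{k}(\varphi K)_{\xi}\,d\xi\,.
\end{equation*}
Using $\mathbb{E}\beta_{k}(\tau)\beta_{j}(s)=\delta_{kj}(\tau\wedge s)$ together with the eigenfunction expansion $q(\xi,\zeta)=\sum_{k}q_{k}e_{k}(\xi)e_{k}(\zeta)$ of the kernel (valid because $Q$ shares the eigenbasis~$\{e_{k}\}$ of~$\mathcal{L}$), I obtain
\begin{equation*}
\mathbb{E}\big[\langle B(\tau),\varphi\rangle\langle B(s),\psi\rangle\big]=(\tau\wedge s)\int_{\R}\int_{\R}u(\xi)u(\zeta)q(\xi,\zeta)(\varphi K)_{\xi}(\psi K)_{\zeta}\,d\xi\,d\zeta\,.
\end{equation*}
Setting $\psi=\varphi$ this is precisely $(\tau\wedge s)\langle\Sigma(u)\varphi,\varphi\rangle$, which identifies $\Sigma(u)$ as the covariation operator of~$B$, and polarization then fixes it on all of~$\mathcal{D}(\R)$.

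The main obstacle is the convergence claim underlying the first paragraph, equivalently that $\Sigma(u)$ is trace class so that~$B$ genuinely takes values in~$L^{2}(K)$; this amounts to
\begin{equation*}
\sum_{k}q_{k}\|(u e_{k})_{\xi}\|^{2}_{L^{2}(K)}<\infty\,.
\end{equation*}
I would split $(u e_{k})_{\xi}=u_{\xi}e_{k}+u e_{k}'$ and bound the two contributions separately. For the second piece, item~5 of Lemma~\ref{lem:Kavian} gives $K^{1/2}u\in L^{\infty}(\R)$, so that $\|u e_{k}'\|^{2}_{L^{2}(K)}\leq\|K^{1/2}u\|^{2}_{L^{\infty}}\|e_{k}'\|^{2}_{L^{2}(K)}$, and summing against~$q_{k}$ yields $\|K^{1/2}u\|^{2}_{L^{\infty}}\operatorname{Tr}Q'$, finite by~(\ref{e:traceQ}) since $W_{\xi}=\sum_{k}\sqrt{q_{k}}e_{k}'\beta_{k}$ has covariance~$Q'$. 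For the first piece I would use the uniform $L^{\infty}$ bound on the weighted eigenfunctions $K^{1/2}e_{k}$ (these are the $L^{2}$-normalized Hermite functions, whose sup-norms are bounded in~$k$) to get $\|u_{\xi}e_{k}\|^{2}_{L^{2}(K)}\leq C\|u_{\xi}\|^{2}_{L^{2}(K)}$, so that summing against~$q_{k}$ is controlled by $C\|u\|^{2}_{H^{1}(K)}\operatorname{Tr}Q<\infty$. Thus the two trace conditions in~(\ref{e:traceQ}) together with the $H^{1}(K)$-regularity of~$u$ are exactly what guarantee both the convergence of the defining series and the trace-class property of~$\Sigma(u)$; the continuity of paths and the Gaussian independent-increment structure then follow routinely from the uniform convergence.
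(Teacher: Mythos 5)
Your proposal is correct and follows essentially the same route as the paper: expand $B$ through the series representation~(\ref{e:W}), compute the covariance kernel $\partial_{\xi}\partial_{\zeta}\left[u(\xi)u(\zeta)q(\xi,\zeta)\right]$, and integrate by parts against $(\varphi K)_{\xi}(\varphi K)_{\zeta}$ to recover $\langle\Sigma(u)\varphi,\varphi\rangle$. The only difference is that you additionally verify the trace-class/convergence issue via the splitting $(ue_{k})_{\xi}=u_{\xi}e_{k}+ue_{k}'$ and the two conditions in~(\ref{e:traceQ}), a point the paper dismisses as ``direct''; this is a welcome tightening rather than a divergence in method.
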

\begin{proof}
The proof is direct.  By~(\ref{e:W}), 
\begin{equation*}
W_{\xi}=\sum_{k=1}^{\infty}\sqrt{q_{k}}e'_{k}(\xi)\beta_{k}(\tau).
\end{equation*}
Then by the representation of $q(\xi, \zeta)$,
\begin{eqnarray*}
&&\mathbb{E}B(\tau,\xi)B(\tau, \zeta)
\\&=&\mathbb{E}\left(u(\xi)\sum_{k}\sqrt{q_{k}}e_{k}(\xi)\beta_{k}(\tau)\right)_{\xi}
\left(u(\zeta)\sum_{l}\sqrt{q_{l}}e_{l}(\zeta)\beta_{l}(\tau)\right)_{\xi}\\
&=&u_{\xi}(\xi)u(\zeta)q_{\zeta}(\xi, \zeta)+u_{\xi}(\xi)u_{\zeta}(\zeta)q(\xi, \zeta)+u(\xi)u_{\zeta}(\zeta)q_{\xi}(\xi, \zeta)\\&&{}+u(\xi)u(\zeta)q_{\xi,\zeta}(\xi, \zeta).
\end{eqnarray*}
By the definition of~$\Sigma(u)$ this proves the lemma.
\end{proof}

To pass to the limit $\e\rightarrow 0$ in $Z^{\e}_{3}$~and~$Z_{4}^{\e}$ we apply again the perturbation method~\cite{Kushner}. Let 
\begin{eqnarray*}
F_{3}^{\e}(\tau)&=&f''(\langle u^{\e}(\tau), \varphi\rangle) \int_{\tau}^{\infty}  \mathbb{E} \Big[ \langle u^{\e}(\tau)\bar{\eta}^{\e}(s), \varphi_{\xi}+\tfrac12\xi\varphi\rangle^{2}\\&&{}-\langle \Sigma(u^{\e}(\tau))\varphi, \varphi\rangle\,\big|\,\mathcal{F}_{\tau}^{\e}\Big]\,ds
\end{eqnarray*}
and 
\begin{eqnarray*}
F_{4}^{\e}(\tau)&=&f'(\langle u^{\e}(\tau), \varphi\rangle)\int_{\tau}^{\infty}\mathbb{E}\Big[\langle (u^{\e}(\tau)\bar{\eta}^{\e}(s))_{\xi}, (\varphi_{\xi}+\tfrac12\xi\varphi)\bar{\eta}^{\e}(s)\rangle\\&&{}-\langle A(u^{\e}(\tau)), \varphi\rangle\,\big|\,\mathcal{F}_{\tau}^{\e}\Big]ds\,.
\end{eqnarray*}
By the construction of $\bar{\eta}^{\e}$ and $\bar{\eta}_{\xi}^{\e}$\,,
\begin{equation}\label{e:eta-eta}
\mathbb{E}\left[\bar{\eta}^{\e}(s, \xi)\bar{\eta}^{\e}(s, \zeta) \mid \mathcal{F}_{\tau}^{\e}\right]=e^{-2({s-\tau})/{\e}}\bar{\eta}^{\e}(\tau,\xi)\bar{\eta}^{\e}(\tau,\zeta)+\tfrac12q(\xi, \zeta)(1-e^{-2({s-\tau})/{\e}}),
\end{equation}
\begin{equation}
\mathbb{E}\left[\bar{\eta}^{\e}(s, \xi)\bar{\eta}_{\xi}^{\e}(s, \xi) \mid \mathcal{F}_{\tau}^{\e}\right]=e^{-2({s-\tau})/{\e}}\bar{\eta}^{\e}(\tau,\xi)\bar{\eta}^{\e}_{\xi}(\tau,\xi)+\tfrac12q'(\xi, \xi)(1-e^{-2({s-\tau})/{\e}}).\label{e:eta-eta-xi}
\end{equation}
Then   
\begin{eqnarray}\label{e:F-3}
F_{3}^{\e}(\tau)= \tfrac{\e}{2} f''(\langle u^{\e}(\tau),\varphi\rangle)\big[\langle u^{\e}(\tau)\bar{\eta}^{\e}(\tau),\varphi_{\xi}+\tfrac12\xi\varphi\rangle^{2}
-\tfrac12\langle \Sigma(u^{\e}(\tau))\varphi, \varphi\rangle\big]
\end{eqnarray}
and  
\begin{equation}
F_{4}^{\e}(\tau)=\tfrac{\e}{2} f'(\langle u^{\e}(\tau), \varphi\rangle)\big[\langle (u^{\e}(\tau)\bar{\eta}^{\e}(\tau))_{\xi}, (\varphi_{\xi}+\tfrac12\xi\varphi)\bar{\eta}^{\e}(\tau)\rangle
-\tfrac12\langle A(u^{\e}(\tau)), \varphi\rangle \big]\,.  
\end{equation} 
Then by the estimates on $u^{\e}(\tau)$, $\bar{\eta}^{\e}(\tau)$ and $\bar{\eta}^{\e}_{\xi}(\tau)$, direct computation  yields this lemma.
\begin{lemma}
As $\e\rightarrow 0$\,,
\begin{equation*}
\sup_{\tau\geq 0}\mathbb{E}F_{3}^{\e}(\tau)=\mathcal{O}(\e), \quad \text{and} \quad \sup_{\tau\geq 0}\mathbb{E}F_{4}^{\e}(\tau)=\mathcal{O}(\e).
\end{equation*}
\end{lemma}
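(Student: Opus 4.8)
The plan is to exploit the explicit prefactor~$\e/2$ already present in the closed-form expression~\eqref{e:F-3} for~$F_3^\e$ and in the companion formula for~$F_4^\e$. Once these representations are in hand, it suffices to show that the \emph{expectation of each bracketed quantity} is bounded by a constant independent of both~$\tau$ and~$\e$; multiplying by~$\e/2$ then yields the claimed $\mathcal O(\e)$ uniformly in~$\tau$. Since $f$~is twice differentiable with compact support, $\|f'\|_{L^\infty(\R)}$ and $\|f''\|_{L^\infty(\R)}$ are finite, so I would first pull these factors out and reduce to estimating $\EX\big|\langle u^\e(\tau)\bar\eta^\e(\tau),\varphi_\xi+\tfrac12\xi\varphi\rangle^2-\tfrac12\langle\Sigma(u^\e(\tau))\varphi,\varphi\rangle\big|$ for~$F_3^\e$ and the analogous bracket for~$F_4^\e$.

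For the $F_3^\e$ bracket I would apply the Cauchy--Schwarz inequality in~$L^2(K)$ together with the uniform bound $\|u^\e(\tau)\|_{L^\infty(\R)}\le m$ to obtain $\langle u^\e\bar\eta^\e,\varphi_\xi+\tfrac12\xi\varphi\rangle^2\le m^2\|\bar\eta^\e(\tau)\|_{L^2(K)}^2\,\|\varphi_\xi+\tfrac12\xi\varphi\|_{L^2(K)}^2$. Taking expectations and using that $\bar\eta^\e$~is stationary with equal-time covariance $\EX\bar\eta^\e(\tau,\xi)\bar\eta^\e(\tau,\zeta)=\tfrac12q(\xi,\zeta)$ gives $\EX\|\bar\eta^\e(\tau)\|_{L^2(K)}^2=\tfrac12\int_\R q(\xi,\xi)K(\xi)\,d\xi=\tfrac12\operatorname{Tr}Q<\infty$, independent of~$\tau$ and~$\e$. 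The deterministic correction $\langle\Sigma(u^\e)\varphi,\varphi\rangle$ is bounded directly by $m^2\|q\|_{L^\infty(\R\times\R)}$ times an integral of $|(\varphi K)_\xi|^2$ over the compact support of~$\varphi$, hence also uniformly bounded. This yields $\sup_{\tau}\EX|F_3^\e(\tau)|=\mathcal O(\e)$.

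The term~$F_4^\e$ is the more delicate one, and I expect it to be the main obstacle. After expanding $(u^\e\bar\eta^\e)_\xi=u^\e_\xi\bar\eta^\e+u^\e\bar\eta^\e_\xi$, the first summand carries the spatial derivative~$u^\e_\xi$, and a direct Cauchy--Schwarz bound would require a uniform-in-time $H^1(K)$~estimate for~$u^\e$, which is \emph{not} available for a general solution (only the time-integrated bound~\eqref{e:estimates} holds). The key step is therefore to integrate by parts to move~$\partial_\xi$ off~$u^\e$, rewriting $\int_\R u^\e_\xi(\bar\eta^\e)^2(\varphi_\xi+\tfrac12\xi\varphi)K\,d\xi=-\int_\R u^\e\big[(\bar\eta^\e)^2(\varphi_\xi+\tfrac12\xi\varphi)K\big]_\xi\,d\xi$, the boundary terms vanishing by the $L^2(K)$~decay. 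After this manoeuvre every occurrence of~$u^\e$ is undifferentiated and controlled by~$m$, while the remaining factors are products of~$\bar\eta^\e$ and~$\bar\eta^\e_\xi$ against the compactly supported test function; taking expectations and applying Cauchy--Schwarz on the Gaussian factors reduces everything to the pointwise variances $\EX(\bar\eta^\e)^2=\tfrac12q(\xi,\xi)$ and $\EX(\bar\eta^\e_\xi)^2=\tfrac12q'(\xi,\xi)$, which integrate to finite constants over the support of~$\varphi$. The deterministic correction $\langle A(u^\e),\varphi\rangle$ is estimated verbatim using $\|u^\e\|_{L^\infty(\R)}\le m$ and the boundedness of~$q$ and~$q'$. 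Collecting these bounds gives $\sup_\tau\EX|F_4^\e(\tau)|=\mathcal O(\e)$, which completes the proof; the single point requiring care throughout is that integration by parts, rather than a crude energy estimate, is used, so that only the available $L^\infty$~control of~$u^\e$ and the second moments of the stationary process~$\bar\eta^\e$ enter.
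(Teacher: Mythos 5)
Your proposal is correct and follows the only route available: exploit the explicit prefactor $\e/2$ in the closed-form expressions for $F_3^{\e}$ and $F_4^{\e}$ and bound the expectations of the bracketed quantities uniformly in $\tau$ and $\e$ using $\|u^{\e}\|_{L^{\infty}(\R)}\le m$, the stationarity of $\bar\eta^{\e}$, and the finite traces of $Q$ and $Q'$. The paper itself gives no argument beyond the phrase ``direct computation yields this lemma,'' so your write-up is essentially the intended proof with the details supplied; in fact you prove the slightly stronger bound with absolute values, which is what is actually used later for $R^{\e}(\tau)$. The one substantive point you add is the integration by parts in the $F_4^{\e}$ bracket to move $\partial_\xi$ off $u^{\e}$: this is a genuine and necessary step, since for a general solution only the time-integrated $H^1(K)$ bound \eqref{e:estimates} is available (a uniform-in-time $H^1$ moment holds only for the stationary solution, Remark~\ref{rem:stat-bound}), so a crude Cauchy--Schwarz on $u^{\e}_\xi$ would not give a $\sup_{\tau\ge 0}$ estimate. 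With that manoeuvre every factor of $u^{\e}$ is undifferentiated and the remaining Gaussian products reduce to the pointwise variances $\tfrac12 q(\xi,\xi)$ and $\tfrac12 q'(\xi,\xi)$ integrated over the compact support of $\varphi$, exactly as you say.
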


 Now following the same discussion as in Lemma~\ref{lem:Z} and~(\ref{e:eta-eta})--(\ref{e:eta-eta-xi}), we have the following lemma.
 \begin{lemma}
\begin{eqnarray*}
A^{\e}F_{3}^{\e}(\tau)&=& f''(\langle u^{\e}(\tau), \varphi)[\langle \tfrac12\Sigma(u^{\e}(\tau))\varphi,\varphi\rangle\\&&{}-\langle u^{\e}(\tau)\bar{\eta}^{\e}(\tau), \varphi_{\xi}+\tfrac12\xi\varphi\rangle]+R_{3}^{\e}(\tau)
\end{eqnarray*}
and 
\begin{eqnarray*}
A^{\e}F_{4}^{\e}(\tau)&=& f'(\langle u^{\e}(\tau), \varphi\rangle)[\langle Au^{\e}(\tau), \varphi\rangle\\&&{}-\langle (u^{\e}(\tau)\bar{\eta}^{\e}(\tau))_{\xi}, (\varphi_{\xi}+\tfrac12\xi\varphi)\bar{\eta}^{\e}(\tau)]+R_{4}^{\e}(\tau)
\end{eqnarray*}
 with 
 \begin{equation*}
 \sup_{\tau\geq 0}\mathbb{E}|R^{\e}_{3}(\tau)|=\mathcal{O}(\e)\quad \text{and}\quad \sup_{\tau\geq 0}\mathbb{E}|R^{\e}_{4}(\tau)|=\mathcal{O}(\e)
 \end{equation*}
as $\e\rightarrow 0$\,. 
 \end{lemma}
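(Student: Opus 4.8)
The plan is to compute $A^{\e}F_3^{\e}$ and $A^{\e}F_4^{\e}$ straight from the definition~(\ref{A-eps}) of~$A^{\e}$, reproducing the decomposition already carried out for $A^{\e}F_1^{\e}$ in the proof of Lemma~\ref{lem:Z}. I would start from the closed form~(\ref{e:F-3}),
\begin{equation*}
F_3^{\e}(\tau)=\tfrac{\e}{2}\,f''(\langle u^{\e}(\tau),\varphi\rangle)\big[\langle u^{\e}(\tau)\bar{\eta}^{\e}(\tau),\varphi_\xi+\tfrac12\xi\varphi\rangle^2-\tfrac12\langle\Sigma(u^{\e}(\tau))\varphi,\varphi\rangle\big],
\end{equation*}
and the analogous expression for~$F_4^{\e}$, and use that $(u^{\e},\bar{\eta}^{\e})$ is Markov with $\bar{\eta}^{\e}$ the only fast input: given~$\mathcal{F}_\tau^{\e}$ the slow field~$u^{\e}$ evolves pathwise by the \rde~(\ref{e:rBurgers}), whereas $\bar{\eta}^{\e}(\tau+\delta)$ is governed by the fast Ornstein--Uhlenbeck law with the second--moment identities~(\ref{e:eta-eta}) and~(\ref{e:eta-eta-xi}). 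Writing out $\mathbb{E}[F_3^{\e}(\tau+\delta)\mid\mathcal{F}_\tau^{\e}]$ and splitting the increment of each factor into a slow ($u^{\e}$) part and a fast ($\bar{\eta}^{\e}$) part, exactly as in Lemma~\ref{lem:Z}, isolates the dominant contribution.

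The key step is the term in which $u^{\e}$ is frozen at~$\tau$ and only $\bar{\eta}^{\e}(\tau+\delta)$ fluctuates. By~(\ref{e:eta-eta}),
\begin{equation*}
\mathbb{E}\big[\langle u^{\e}(\tau)\bar{\eta}^{\e}(\tau+\delta),\varphi_\xi+\tfrac12\xi\varphi\rangle^2\,\big|\,\mathcal{F}_\tau^{\e}\big]=e^{-2\delta/\e}\langle u^{\e}(\tau)\bar{\eta}^{\e}(\tau),\varphi_\xi+\tfrac12\xi\varphi\rangle^2+\tfrac12\langle\Sigma(u^{\e}(\tau))\varphi,\varphi\rangle\big(1-e^{-2\delta/\e}\big),
\end{equation*}
so the frozen part of $\mathbb{E}[F_3^{\e}(\tau+\delta)\mid\mathcal{F}_\tau^{\e}]-F_3^{\e}(\tau)$ equals $\tfrac{\e}{2}f''(\langle u^{\e}(\tau),\varphi\rangle)\big[\langle u^{\e}(\tau)\bar{\eta}^{\e}(\tau),\varphi_\xi+\tfrac12\xi\varphi\rangle^2-\tfrac12\langle\Sigma(u^{\e}(\tau))\varphi,\varphi\rangle\big]\big(e^{-2\delta/\e}-1\big)$. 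Dividing by~$\delta$ and letting $\delta\to0$, the factor $\tfrac1\delta(e^{-2\delta/\e}-1)\to-\tfrac2\e$ cancels the prefactor~$\tfrac\e2$ and leaves precisely the claimed leading term $f''(\langle u^{\e}(\tau),\varphi\rangle)\big[\tfrac12\langle\Sigma(u^{\e}(\tau))\varphi,\varphi\rangle-\langle u^{\e}(\tau)\bar{\eta}^{\e}(\tau),\varphi_\xi+\tfrac12\xi\varphi\rangle^2\big]$. Equivalently this is the action of the fast Ornstein--Uhlenbeck generator, whose drift $-\tfrac1\e\bar{\eta}^{\e}$ and diffusion $\tfrac1\e Q$ send the quadratic--in--$\bar{\eta}^{\e}$ part of~$F_3^{\e}$ to $-\tfrac2\e$ times itself. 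For $F_4^{\e}$ the computation is identical after replacing~(\ref{e:eta-eta}) by the mixed identity~(\ref{e:eta-eta-xi}) with~$q'$; the same $-\tfrac2\e$ cancellation produces the stated term $f'(\langle u^{\e}(\tau),\varphi\rangle)\big[\langle A(u^{\e}(\tau)),\varphi\rangle-\langle(u^{\e}(\tau)\bar{\eta}^{\e}(\tau))_\xi,(\varphi_\xi+\tfrac12\xi\varphi)\bar{\eta}^{\e}(\tau)\rangle\big]$.

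Everything else goes into $R_3^{\e}$ and $R_4^{\e}$, namely the contributions in which $A^{\e}$ differentiates the slow field~$u^{\e}$ inside the correctors, i.e. in which the frozen $u^{\e}(\tau)$ is replaced by its \rde{} increment. Each such term retains the corrector prefactor~$\tfrac\e2$, so the regular drift $\mathcal{L}u^{\e}-u^{\e}u^{\e}_\xi$ contributes $\mathcal{O}(\e)$ directly. The delicate part is the singular advective increment $\tfrac1{\sqrt\e}(u^{\e}\bar{\eta}^{\e})_\xi$, which pointwise is only $\mathcal{O}(\sqrt\e)$; however the terms it generates are odd in $\bar{\eta}^{\e}(\tau)$, and on taking expectations the single--time cross--correlation between $u^{\e}(\tau)$ and the fast field is itself $\mathcal{O}(\sqrt\e)$ (by~(\ref{e:con-exp}) and~(\ref{e:q})--(\ref{e:q'}), since $\bar{\eta}^{\e}$ decorrelates on the scale~$\e$ against a coupling of size~$\tfrac1{\sqrt\e}$), so an extra factor $\sqrt\e$ is gained and one arrives at $\sup_{\tau\ge0}\mathbb{E}|R_3^{\e}(\tau)|=\mathcal{O}(\e)$ and $\sup_{\tau\ge0}\mathbb{E}|R_4^{\e}(\tau)|=\mathcal{O}(\e)$. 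These bounds rely on the uniform $L^\infty(\R)$ bound $\|u^{\e}\|_{L^\infty(\R)}\le m$ from Section~\ref{sec:u-L-infty}, the $H^1(K)$ estimate~(\ref{e:estimates}) (equivalently~(\ref{e:u-bar-eps}) for the stationary solution), and the finite moments of $\bar{\eta}^{\e}$ and $\bar{\eta}^{\e}_\xi$ from~(\ref{e:q})--(\ref{e:q'}). The main obstacle is exactly this remainder estimate: disentangling the $\tfrac1{\sqrt\e}$ advective increment paired against the $\mathcal{O}(\e)$ corrector and extracting the compensating~$\sqrt\e$ from the $u^{\e}$--$\bar{\eta}^{\e}$ correlation, uniformly in~$\tau$; the leading-order bookkeeping is otherwise a direct repetition of Lemma~\ref{lem:Z}.
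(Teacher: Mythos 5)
Your proposal is correct and follows essentially the same route as the paper: decompose $\mathbb{E}[F_{i}^{\e}(\tau+\delta)\mid\mathcal{F}_{\tau}^{\e}]$ into slow-increment terms (of $f$, of $u^{\e}$) plus the frozen term, and extract the leading contribution from the frozen term via the Ornstein--Uhlenbeck identities~(\ref{e:eta-eta})--(\ref{e:eta-eta-xi}), the factor $\tfrac1\delta(e^{-2\delta/\e}-1)\to-\tfrac2\e$ cancelling the corrector prefactor~$\tfrac\e2$. You in fact supply more detail than the paper (which ends with ``direct computation yields the result''), in particular on why the singular $\tfrac1{\sqrt\e}$ advective increment inside the remainder still yields $\mathcal{O}(\e)$; you also correctly keep the square on $\langle u^{\e}\bar{\eta}^{\e},\varphi_{\xi}+\tfrac12\xi\varphi\rangle^{2}$, which the lemma's statement drops by an apparent typographical slip.
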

\begin{proof}
This is similar to the discussion in the proof of Lemma~\ref{lem:Z}. 
First we have  for any $\delta>0$  
\begin{eqnarray*}
&&\mathbb{E}[F^{\e}_{3}(\tau+\delta) \mid \mathcal{F}_{\tau}^{\e}]\\
&=&\tfrac{\e}{2}\mathbb{E}\Big[\big(f''(\langle u^{\e}(\tau+\delta), \varphi\rangle)-f''(\langle u^{\e}(\tau), \varphi\rangle)\big)\\
&&{}\times \big(\langle u^{\e}(\tau+\delta)\bar{\eta}^\e(\tau+\delta), \varphi_{\xi}+\tfrac12\xi\varphi\rangle^{2}  \\
&&{}\quad-\langle \Sigma(u^{\e}(\tau+\delta))\varphi, \varphi\rangle \big)  \mid \mathcal{F}_{\tau}^{\e}\Big]\\
&&{}+\tfrac{\e}{2}f''(\langle u^{\e}(\tau), \varphi\rangle)\mathbb{E}\Big[ \langle (u^{\e}(\tau+\delta)-u^{\e}(\tau))\bar{\eta}^{\e}(\tau+\delta), \varphi_{\xi}+\tfrac12\xi\varphi\rangle^{2}\\
&&{}\quad -\langle (\Sigma(u^{\e}(\tau+\delta))-\Sigma(u^{\e}(\tau)))\varphi, \varphi\rangle \mid \mathcal{F}_{\tau}^\e \Big]\\&&{}+\tfrac{\e}{2}f''(\langle u^{\e}(\tau), \varphi\rangle)\mathbb{E}\Big[ \langle u^{\e}(\tau)\bar{\eta}^{\e}(\tau+\delta), \varphi_{\xi}+\tfrac12\xi\varphi\rangle^{2}-\langle\Sigma(u^{\e}(\tau))\varphi, \varphi\rangle  \mid \mathcal{F}_{\tau}^{\e} \Big]
\end{eqnarray*}
 and 
 \begin{eqnarray*}
&&\mathbb{E}[F_{4}^{\e}(\tau+\delta) \mid \mathcal{F}_{\tau}^{\e}]\\
&=&\tfrac{\e}{2}\mathbb{E}\Big[\big(f'(\langle u^{\e}(\tau+\delta), \varphi\rangle)-f'(\langle u^{\e}(\tau), \varphi\rangle)\big)\\
&&{}\times \big( \langle (u^{\e}(\tau+\delta)\bar{\eta}^{\e}(\tau+\delta))_{\xi}, (\varphi_{\xi}+\tfrac12\xi\varphi)\bar{\eta}^{\e}(\tau+\delta)\rangle \\
&&\quad -\langle A(u^{\e}(\tau+\delta)), \varphi\rangle\big) \mid \mathcal{F}_{\tau}^{\e} \Big]\\&&{}+\tfrac{\e}{2}f'(\langle u^{\e}(\tau), \varphi\rangle)\mathbb{E}\Big[\big\langle [(u^{\e}(\tau+\delta)-u^{\e}(\tau))\bar{\eta}^{\e}(\tau+\delta)]_{\xi}, \\&&\quad (\varphi_{\xi}+\tfrac12\xi\varphi_{\xi})\bar{\eta}^{\e}(\tau+\delta)\big\rangle-\langle A(u^{\e}(\tau+\delta))-A(u^{\e}(\tau)), \varphi\rangle \mid \mathcal{F}_{\tau}^{\e} \Big]\\
&&{}+\tfrac{\e}{2}f'(\langle u^{\e}(\tau), \varphi\rangle )\mathbb{E}\Big[ \langle (u^{\e}(\tau)\bar{\eta}^{\e}(\tau+\delta))_{\xi}, (\varphi_{\xi}+\tfrac12\xi\varphi)\bar{\eta}^{\e}(\tau+\delta)\rangle\\&&\quad -\langle A(u^{\e}(\tau)), \varphi\rangle \mid \mathcal{F}_{\tau}^{\e}\Big]\,.
\end{eqnarray*}
Then by the definition of $A^{\e}$ and~(\ref{e:eta-eta})--(\ref{e:eta-eta-xi}),  direct computation yields the result.
The proof is complete.
\end{proof}
Now we have the following $\mathcal{F}_{\tau}^{\e}$ martingale 
\begin{eqnarray}
\mathcal{M}^{\e}(\tau)&=&f(\langle u^{\e}(\tau), \varphi\rangle)- f(\langle u^{\e}_{0}, \varphi\rangle)-    F_{1}^{\e}(\tau)-F_{3}^{\e}(\tau)-F_{4}^{\e}(\tau)
\nonumber\\&&{}
-\int_{0}^{\tau}f'(\langle u^{\e}(r), \varphi\rangle)\Big[\langle u^{\e}(r), \mathcal{L}\varphi\rangle+\left\langle\tfrac12(u^{\e}(r))^{2}, \varphi_{\xi}+\tfrac12\xi\varphi\right\rangle
\nonumber \\ &&\hspace{10em}{}
+\langle A(u^{\e}(r)), \varphi\rangle \Big]dr\nonumber\\&&{}
-\tfrac12\int_{0}^{\tau}f''(\langle u^{\e}(r), \varphi\rangle)\langle \Sigma(u^{\e}(r))\varphi, \varphi\rangle dr+R^{\e}(\tau)\nonumber
\end{eqnarray}
where 
\begin{equation*}
R^{\e}(\tau)=\int_{0}^{\tau}[Z_{5}^{\e}(s)+R_{3}^{\e}(s)+R_{4}^{\e}(s)]\,ds 
\end{equation*}
with $\mathbb{E}|R^{\e}(\tau)|=\mathcal{O}(\e)$ as $\e\rightarrow 0$\,.
Now passing to the limit $\e\rightarrow 0$\,,  the distribution of the limit~$u$ solves the martingale problem
\begin{eqnarray}
\mathcal{M}(\tau)&=&f(\langle u(\tau), \varphi\rangle)- f(\langle u_{0}, \varphi\rangle) 
-\int_{0}^{\tau}f'(\langle u(r), \varphi\rangle)\\&&{}\times\Big[\langle u(r), \mathcal{L}\varphi\rangle+\left\langle\tfrac12(u(r))^{2}, \varphi_{\xi}+\tfrac12\xi\varphi\right\rangle+\langle A(u(r)), \varphi\rangle \Big]dr\nonumber\\&&{}
-\tfrac12\int_{0}^{\tau}f''(\langle u(r), \varphi\rangle)\langle \Sigma(u(r))\varphi, \varphi\rangle dr\nonumber
\end{eqnarray}
which, by Lemma~\ref{lem:uW}, is equivalent~\cite{MM88} to the martingale solution to the \textsc{spde}~(\ref{e:SPDE}) 
for some new Wiener process~$\bar{W}$ with the same distribution as that of~$W$.
By the general  theory of \textsc{spde}s~\cite{Prato}, the martingale solution to the \spde~(\ref{e:SPDE}) is unique in the space $L^{2}(0, T; H^{1}(K)) \cap C(0, T; L^{2}(K))$ for any $T>0$\,. Then we deduce the following theorem.
\begin{theorem}
Assume Assumption~\ref{ass:a} holds and that the initial data $u^{\e}_{0}\in L^{2}(\Omega, L^{2}(K)\cap L^{\infty}(\R))$ satisfies~(\ref{e:ini-mM}).  Moreover, $u^{\e}_{0}$~converges in distribution to~$u_{0}$.   The solution of  \textsc{rde}~(\ref{e:rBurgers}),~$u^{\e}$, converges in distribution in the space~$C([0, \infty), L^{2}(K))$ to~$u$ which solves the \textsc{spde}~(\ref{e:SPDE}) with initial data~$u_{0}$.
\end{theorem}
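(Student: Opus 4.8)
The plan is to combine the tightness already established with a martingale characterisation of every limit point, and then to invoke uniqueness for the \spde~(\ref{e:SPDE}) to upgrade subsequential convergence to convergence of the full family. Since $C([0,\infty), L^{2}(K))$ is a Polish space and the preceding tightness theorem shows $\{u^{\e}\}_{0<\e\leq 1}$ is tight there, Prokhorov's theorem guarantees that every sequence $\e_{n}\to 0$ has a subsequence along which $u^{\e}$ converges in distribution to some limit~$u$; the whole task is to identify the law of each such $u$ as the martingale solution of~(\ref{e:SPDE}).

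To identify the limit I would reuse the $\mathcal{F}_{\tau}^{\e}$-martingale $\mathcal{M}^{\e}(\tau)$ assembled above from $f(\langle u^{\e}(\tau),\varphi\rangle)$ together with the correctors $F_{1}^{\e}, F_{3}^{\e}, F_{4}^{\e}$. Because $u^{\e}\to u$ only in distribution, I would first apply the Skorohod representation theorem to realise the convergence almost surely in $C([0,\infty), L^{2}(K))$ on a common probability space, without relabelling. Then I would pass to the limit $\e\to 0$ term by term in $\mathcal{M}^{\e}$: by Lemma~\ref{lem:F-1} the corrector $F_{1}^{\e}$ vanishes uniformly on $[0,T]$; the bounds on $F_{3}^{\e}, F_{4}^{\e}$ and on the remainders $R_{3}^{\e}, R_{4}^{\e}, Z_{5}^{\e}$ make $R^{\e}(\tau)=\mathcal{O}(\e)$ negligible; the nonlinear contribution converges via the almost-everywhere convergence of $u^{\e}$ and the uniform $L^{\infty}(\R)$~bound, giving $\langle(u^{\e})^{2},\varphi\rangle\to\langle u^{2},\varphi\rangle$ as in~(\ref{e:u-2}); and the diffusion term $\langle\Sigma(u^{\e})\varphi,\varphi\rangle$ together with the drift $\langle A(u^{\e}),\varphi\rangle$ converge by continuity of $\Sigma$ and $A$ in $u$ and the same $L^{\infty}$~control. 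This produces the limiting martingale $\mathcal{M}(\tau)$, so $u$ solves the martingale problem with generator built from $\mathcal{L}$, the Burgers nonlinearity, the drift $A$, and the diffusion $\Sigma$.

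By Lemma~\ref{lem:uW} the operator $\Sigma(u)$ is exactly the covariance of $(uW)_{\xi}$, so this martingale problem is equivalent to the statement that $u$ is a weak (martingale) solution of~(\ref{e:SPDE}) for a new Wiener process $\bar{W}$ distributed as~$W$; here the drift $A(u)$ is precisely the weak form of the correction terms $\tfrac12(uq)_{\xi\xi}-\tfrac12(uq')_{\xi}$. Finally, since the martingale solution of~(\ref{e:SPDE}) is unique in $L^{2}(0,T;H^{1}(K))\cap C(0,T;L^{2}(K))$ for every $T>0$ by the general theory of \spde{}s, the law of $u$ is uniquely determined and independent of the chosen subsequence; hence the entire family $u^{\e}$ converges in distribution to $u$ in $C([0,\infty), L^{2}(K))$, as claimed.

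The main obstacle, and the feature the whole construction is designed around, is the singular advection $\tfrac{1}{\sqrt{\e}}(u^{\e}\bar{\eta}^{\e})_{\xi}$, whose naive limit does not exist. The perturbation correctors are engineered so that $A^{\e}F_{1}^{\e}$ cancels the $\e^{-1/2}$ singular term appearing in $A^{\e}f$, while $A^{\e}F_{3}^{\e}$ and $A^{\e}F_{4}^{\e}$ convert the remaining order-one quadratic-variation contributions into the genuine diffusion $\tfrac12 f''\langle\Sigma(u)\varphi,\varphi\rangle$ and drift $f'\langle A(u),\varphi\rangle$. Verifying that every leftover term is $\mathcal{O}(\e)$ uniformly in $\tau$ — which relies on the exponential mixing~(\ref{e:con-exp}) and the conditional covariances~(\ref{e:eta-eta})--(\ref{e:eta-eta-xi}) of the fast Ornstein--Uhlenbeck process — is the technical heart, and it is exactly the content carried out in the lemmas preceding this theorem.
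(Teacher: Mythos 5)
Your proposal follows essentially the same route as the paper: tightness plus Prokhorov to extract subsequential limits, the Skorohod representation to upgrade to almost sure convergence, term-by-term passage to the limit in the perturbed martingale $\mathcal{M}^{\e}$ using Lemmas~\ref{lem:F-1}--\ref{lem:uW} and the $\mathcal{O}(\e)$ remainder bounds, identification of the limit law as the unique martingale solution of the \spde~(\ref{e:SPDE}), and uniqueness to conclude convergence of the whole family. The argument is correct and matches the paper's proof in both structure and the key lemmas invoked.
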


Notice that the \textsc{spde}~(\ref{e:SPDE}) is different from the stochastic Burgers' equation~(\ref{e:tau-xi-Burgers}). Now  we consider random Burgers' type equation~(\ref{e:rBurgers2}). 
By the assumption that $q(\xi)\in H^{2}(K)$ and~(\ref{e:q-bound}),  the extra terms $\tfrac12(u^{\e}q)_{\xi\xi}$ and $\tfrac12(u^{\e}q')_{\xi}$   do not change the estimates in sections~\ref{sec:self-similar-sol}--\ref{sec:diffusionApp}. 
Then we derive this theorem. 
\begin{theorem}\label{thm:converg}
Assume Assumption~\ref{ass:a} holds and $u_{0}^{\e}\in L^{2}(\Omega, L^{2}(K)\cap L^{\infty}(\R))$ converges in distribution to $u_{0}\in L^{2}(\Omega, L^{2}(K)\cap L^{\infty}(\R))$ as $\e \rightarrow 0$\,. Moreover, assume~(\ref{e:ini-mM}) holds. 
The solution  of  \textsc{rde}~(\ref{e:rBurgers2})  converges, as $\e\rightarrow 0$\,,  in distribution in the space~$C([0, \infty), L^{2}(K))$ to the  solution of \textsc{spde}~(\ref{e:tau-xi-Burgers}) with initial data~$u_{0}$.
\end{theorem}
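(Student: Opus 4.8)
The plan is to run the diffusion-approximation machinery of Sections~\ref{sec:estimates}--\ref{sec:diffusionApp} essentially verbatim on the \textsc{rde}~(\ref{e:rBurgers2}), the only genuinely new bookkeeping being the two extra drift terms $-\tfrac12(u^{\e}q)_{\xi\xi}$ and $\tfrac12(u^{\e}q')_{\xi}$. First I would check that these terms leave every a priori estimate intact. Since they carry no singular factor $1/\sqrt{\e}$ and are linear in~$u^{\e}$, testing against~$u^{\e}$ in $L^{2}(K)$ and integrating by parts produces, as its leading contribution, $\tfrac12\int_{\R}(u^{\e}_{\xi})^{2}qK\,d\xi\leq\tfrac12\|q\|_{L^{\infty}}\|u^{\e}_{\xi}\|^{2}_{L^{2}(K)}$, with the remaining lower-order pieces controlled by Young's inequality and the weighted estimates of Lemma~\ref{lem:Kavian} (here $q\in H^{2}(K)$ is used to control $q_{\xi},q_{\xi\xi}$). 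By the smallness~(\ref{e:q-bound}) these are absorbed into the dissipation $-\tfrac12\|u^{\e}_{s}\|^{2}_{H^{1}(K)}$, so the $L^{\infty}(\R)$ bound, the uniform $L^{2}(K)$ and $H^{1}(K)$ estimates, and the time-H\"older bound~(\ref{e:u-holder}) all survive unchanged. Hence Theorem~\ref{thm:tight-T} and the tightness of $\{u^{\e}\}_{0<\e\leq 1}$ in $C([0,\infty),L^{2}(K))$ carry over without modification.

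The decisive step is to pass to the limit through the same martingale formulation. Writing the generator identity for $f(\langle u^{\e}(\tau),\varphi\rangle)$ exactly as in~(\ref{e:f}), the two new terms contribute one additional, \emph{non-singular} summand to $A^{\e}f$, namely
\begin{equation*}
f'(\langle u^{\e}(\tau),\varphi\rangle)\big\langle -\tfrac12(u^{\e}q)_{\xi\xi}+\tfrac12(u^{\e}q')_{\xi},\,\varphi\big\rangle=-f'(\langle u^{\e}(\tau),\varphi\rangle)\langle A(u^{\e}(\tau)),\varphi\rangle,
\end{equation*}
the equality following from integration by parts against the weight and the definition of the operator~$A$. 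Because this term has no $1/\sqrt{\e}$ factor it needs no perturbation functional; the singular term is still handled by the functionals $F_{1}^{\e},F_{3}^{\e},F_{4}^{\e}$, whose construction is unaffected since the extra terms contain no~$\bar{\eta}^{\e}$ (their contributions to $A^{\e}F_{3}^{\e}$ and $A^{\e}F_{4}^{\e}$ are $\mathcal{O}(\e)$ and absorbed into the remainders $R_{3}^{\e},R_{4}^{\e}$). Using~(\ref{e:u-2}) and the almost-sure convergence supplied by Skorohod's theorem, the new term passes directly to the limit as $-f'(\langle u(\tau),\varphi\rangle)\langle A(u(\tau)),\varphi\rangle$.

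The key point is then the cancellation. In the diffusion limit of the singular term the functionals $F_{3}^{\e},F_{4}^{\e}$ produce precisely the drift correction $+\langle A(u),\varphi\rangle$ together with the quadratic-variation density $\langle\Sigma(u)\varphi,\varphi\rangle$, exactly as in Section~\ref{sec:diffusionApp}. Adding the new deterministic contribution $-\langle A(u),\varphi\rangle$ annihilates this correction, so the limiting martingale problem reduces to
\begin{equation*}
\begin{split}
\mathcal{M}(\tau)&=f(\langle u(\tau),\varphi\rangle)-f(\langle u_{0},\varphi\rangle)-\int_{0}^{\tau}f'(\langle u,\varphi\rangle)\big[\langle u,\mathcal{L}\varphi\rangle+\langle\tfrac12 u^{2},\varphi_{\xi}+\tfrac12\xi\varphi\rangle\big]\,dr\\
&\quad-\tfrac12\int_{0}^{\tau}f''(\langle u,\varphi\rangle)\langle\Sigma(u)\varphi,\varphi\rangle\,dr.
\end{split}
\end{equation*}
By Lemma~\ref{lem:uW} this is precisely the martingale formulation of the stochastic Burgers' \spde~(\ref{e:tau-xi-Burgers}), the noise $(u\,d\bar{W})_{\xi}$ carrying covariance~$\Sigma(u)$ for a Wiener process $\bar{W}$ distributed as~$W$. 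Finally, by Theorem~\ref{thm:wellpose} the weak, hence martingale, solution of~(\ref{e:tau-xi-Burgers}) with datum~$u_{0}$ is unique in $L^{2}(0,T;H^{1}(K))\cap C(0,T;L^{2}(K))$; this identifies every accumulation point of $\{u^{\e}\}$ with it and upgrades tightness to convergence in distribution in $C([0,\infty),L^{2}(K))$.

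I expect the main obstacle to be not the singular limit itself, which is inherited from the case of~(\ref{e:rBurgers}) and~(\ref{e:SPDE}), but the verification that the cancellation is \emph{exact}. The delicate part is matching coefficients term by term: one must confirm that the deterministic drift of~(\ref{e:rBurgers2}), written weakly against~$\varphi$, coincides identically with the operator~$A$ generated by the diffusion limit (the $q$ term paired with $(\varphi K)_{\xi\xi}$ and the $q'$ term with $(\varphi K)_{\xi}$), since any mismatch in sign or factor would leave a residual $\mathcal{O}(1)$ drift and the limit would fail to be~(\ref{e:tau-xi-Burgers}). The hypotheses $q\in H^{2}(K)$ and the smallness~(\ref{e:q-bound}) are exactly what make the second-order extra term subordinate to the dissipation in the energy estimate while simultaneously being regular enough to pass to the limit.
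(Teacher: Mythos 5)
Your proposal is correct and follows the same route as the paper: the paper's own justification of Theorem~\ref{thm:converg} is essentially the one-line observation that, by $q\in H^{2}(K)$ and the smallness~(\ref{e:q-bound}), the extra non-singular drift terms leave all estimates of Sections~\ref{sec:self-similar-sol}--\ref{sec:diffusionApp} intact, so the martingale-problem limit goes through and the added terms exactly cancel the drift correction $\tfrac12(uq)_{\xi\xi}-\tfrac12(uq')_{\xi}$ appearing in~(\ref{e:SPDE}). Your sign check that the weak form of the added terms equals $-\langle A(u^{\e}),\varphi\rangle$, and the concluding appeal to uniqueness of the martingale solution, supply exactly the details the paper leaves implicit.
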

By the above theorem and Remark~\ref{rem:stat-bound}
we have the following result on the convergence of stationary statistical solutions. 
Denote by $\bar{\mathfrak{P}}^{\e}=\mathcal{D}(\bar{\hat{u}}^{\e}, \bar{\hat{\eta}}^{\e})$, a stationary statistical solution to the system~(\ref{e:rBurgers2}) coupled with $\bar{\eta}^{\e}$\,.  Let $\bar{\mathbb{P}}^{\e}=\mathcal{D}(\bar{\hat{u}}^{\e})$\,. Then we have the following corollary.
\begin{coro}\label{cor:con}
For $\e\rightarrow 0$\,, there is a sequence $\e_{n}\rightarrow 0$ as $n\rightarrow\infty$\,, such that 
\begin{equation*}
\bar{\mathbb{P}}^{\e_{n}}\rightarrow \bar{\mathbb{P}}\quad \text{weakly as } n\rightarrow\infty\,,
\end{equation*}
where $\bar{\mathbb{P}}$ is a probability space on $C([0, \infty); L^{2}(K))$\,, which is a stationary statistical solution to stochastic Burgers equation~(\ref{e:tau-xi-Burgers})
\end{coro}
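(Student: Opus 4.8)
The plan is to combine the uniform tightness of the stationary solutions with the diffusion-approximation limit of Theorem~\ref{thm:converg}, and then to check that stationarity survives the passage to the limit $\e\to 0$. First I would invoke Remark~\ref{rem:stat-bound}, which asserts that the a priori bound~(\ref{e:estimates}), the H\"older-type estimate~(\ref{e:u-holder}), and hence the tightness criterion built on Lemma~\ref{lem:tight}, all carry over to the stationary family $\{\bar{u}^{\e}\}_{0<\e\leq 1}$ because it satisfies the same uniform-in-$\e$ estimates~(\ref{e:u-bar-eps}). Consequently $\{\bar{\mathbb{P}}^{\e}\}_{0<\e\leq 1}$ is tight in $C([0,\infty);L^{2}(K))$, and Prokhorov's theorem yields a sequence $\e_{n}\to 0$ together with a limit measure $\bar{\mathbb{P}}$ such that $\bar{\mathbb{P}}^{\e_{n}}\to\bar{\mathbb{P}}$ weakly.

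Next I would identify $\bar{\mathbb{P}}$ as a statistical solution of the \spde~(\ref{e:tau-xi-Burgers}). Since each $\bar{u}^{\e}$ solves the \textsc{rde}~(\ref{e:rBurgers2}), the entire argument of Theorem~\ref{thm:converg} applies verbatim to the stationary family: after using the Skorohod representation to upgrade distributional convergence to almost sure convergence in $C([0,\infty);L^{2}(K))$, I would pass to the limit in the martingale problem assembled from the pair $(Y^{\e},Z^{\e})$ as in Section~\ref{sec:diffusionApp}. The limit $\bar{\mathbb{P}}$ then solves the same martingale problem that characterises solutions of~(\ref{e:tau-xi-Burgers}), so $\bar{\mathbb{P}}$ is a statistical solution of that \spde.

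Finally I would verify stationarity. For each fixed $\e$ the process $\bar{u}^{\e}$ is stationary, so for every $h>0$ the time-shift map $\theta_{h}$ on $C([0,\infty);L^{2}(K))$ satisfies $(\theta_{h})_{*}\bar{\mathbb{P}}^{\e}=\bar{\mathbb{P}}^{\e}$. Because $\theta_{h}$ is continuous, weak convergence is preserved under push-forward, so $(\theta_{h})_{*}\bar{\mathbb{P}}=\lim_{n}(\theta_{h})_{*}\bar{\mathbb{P}}^{\e_{n}}=\lim_{n}\bar{\mathbb{P}}^{\e_{n}}=\bar{\mathbb{P}}$; hence $\bar{\mathbb{P}}$ is shift-invariant, that is, a stationary statistical solution of~(\ref{e:tau-xi-Burgers}). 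The main obstacle I anticipate lies in the middle step: the nonlinear advection and the singular fast-fluctuation term forbid a naive passage to the limit, so one must transport the full perturbed-test-function machinery of Section~\ref{sec:diffusionApp} — in particular the quadratic-term convergence~(\ref{e:u-2}) and the corrector estimates on $F_{3}^{\e}$ and $F_{4}^{\e}$ — uniformly along the \emph{stationary} family, which is precisely what Remark~\ref{rem:stat-bound} is designed to guarantee.
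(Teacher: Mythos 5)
Your proposal is correct and follows essentially the same route as the paper, which derives the corollary in one line from Theorem~\ref{thm:converg} together with Remark~\ref{rem:stat-bound} (tightness of the stationary family, subsequential weak limit, identification via the martingale problem of Section~\ref{sec:diffusionApp}). Your explicit verification that stationarity is preserved under the weak limit via continuity of the time-shift map is a detail the paper leaves implicit, but it is the intended argument.
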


We have shown that for any $\mathfrak{y}=\mu*\bar{\nu}\in\mathfrak{M}_{2}$\,, there is a stationary measure $\bar{\mathfrak{y}}^{\e}=\bar{\mu}^{\e}*\bar{\nu}$ such that $\mathcal{P}_{\tau}^{\e}\mathfrak{y}$ weakly converges to $\bar{\mathfrak{y}}^{\e}$ as $\tau\rightarrow\infty$\,. 
Denote by $\bar{u}^{\e}$ the solution to~(\ref{e:rBurgers2})   with initial data distributes as $\bar{\mu}^{\e}$\,.  
Then the probability measure $\bar{\mathfrak{P}}^{\e}=\mathcal{D}(\bar{u}^{\e}, \eta^{\e})$ is a stationary statistical solution to the system~(\ref{e:rBurgers2}) coupled with~(\ref{e:eta}).
We are concerned with the marginal distribution $\bar{\mathbb{P}}^{\e}=\mathcal{D}(\bar{u}^{\e})$\,.

For any statistical solution $\mathfrak{P}^{\e}$ to~(\ref{e:rBurgers2}) coupled with~(\ref{e:eta}), denote  by~$\mathbb{P}^{\e}$ the marginal distribution,  then $\mathbb{P}_{\tau}^{\e}$~converges weakly to~$\bar{\mathbb{P}}^{\e}$ as $\tau\rightarrow\infty$. 
Let $\mathfrak{P}^{\e}=\mathcal{D}(\hat{u}^{\e}, \hat{\eta}^{\e})$ with $(\hat{u}^{\e}, \hat{\eta}^{\e})$
solving~(\ref{e:rBurgers2}) coupled with~(\ref{e:eta})  with new Wiener process~$\hat{W}$ distributing the same as~$W$. 
Then $\mathbb{P}_{\tau}^{\e}=\mathcal{D}(\hat{u}^{\e}(\cdot+\tau))$\,.
 By the continuous dependence on initial  data of the solution,   as
 \begin{equation*}
 \mathcal{D}(\hat{u}^{\e}(\tau))\rightarrow \mathcal{D}(\bar{u}^{\e}(0))\quad \text{weakly as } \tau\rightarrow\infty\,,
 \end{equation*}
 we have 
\begin{equation*}
\mathcal{D}(\hat{u}^{\e}(\cdot+\tau))\rightarrow \mathcal{D}(\bar{u}^{\e})\quad \text{weakly as } \tau\rightarrow\infty\,.
\end{equation*}
 That is $\mathbb{P}^{\e}_{\tau}\rightarrow \bar{\mathbb{P}}^{\e}$ weakly as $\tau\rightarrow\infty$\,.  
 Furthermore by the convergence result Corollary~\ref{cor:con}, there is   $\e_{n}\rightarrow0$ as $n\rightarrow \infty$\,, $\bar{\mathbb{P}}^{\e_{n}}$ converges weakly to some probability measure~$\bar{\mathbb{P}}$ which is a stationary statistical solution of \spde{}~(\ref{e:tau-xi-Burgers}) in the space $C([0, \infty), L^{2}(K))$.

 Now consider any solution $u\in C([0, \infty); L^{2}(K))$ to the \spde{}~(\ref{e:tau-xi-Burgers}). 
 Then $\mathbb{P}=\mathcal{D}(u)$ is a statistical solution  to the \spde{}~(\ref{e:tau-xi-Burgers}) and, by Theorem~\ref{thm:converg}, there is a statistical solution $\mathfrak{P}^{\e}$ to the system~(\ref{e:rBurgers2}) coupled with $\eta^{\e}$,  such that the marginal distribution~$\mathbb{P}^{\e}$ converges to~$\mathbb{P}$ weakly as $\e\rightarrow 0$\,. 
 Moreover $\mathbb{P}^{\e}_{\tau}$ converges weakly to $\mathbb{P}_{\tau}$ as $\e\rightarrow 0$\,.  One can choose $u_{0}^{\e}=u_{0}$ in Theorem~\ref{thm:converg}. 
 
Then for any $\delta>0$\,, there is $\mathcal{T}>0$ such that for $\tau>\mathcal{T}$ and for sufficiently large~$n$  
\begin{equation*}
d_{\text{P}}(\mathbb{P}_{\tau}, \bar{\mathbb{P}})\leq 
d_{\text{P}}(\mathbb{P}_{\tau}, \mathbb{P}^{\e_{n}}_{\tau})+d_{\text{P}}(\mathbb{P}_{\tau}^{\e_{n}}, \bar{\mathbb{P}}^{\e_{n}})+d_{\text{P}}(\bar{\mathbb{P}}^{\e_{n}}, \bar{\mathbb{P}})\leq \delta\,.
\end{equation*} 
That is $d_{\text{P}}(\mathbb{P}_{\tau}, \bar{\mathbb{P}})\rightarrow 0$ as $\tau\rightarrow\infty$\,.  
Assume $\mathbb{P}=\mathcal{D}(\hat{u})$ and $\bar{\mathbb{P}}=\mathcal{D}(\bar{u})$\,, then $\bar{u}$~is a stationary process and 
\begin{equation*}
\mathcal{D}(\hat{u}(\tau))\rightarrow \mathcal{D}(\bar{u}(0))\quad \text{weakly as } \tau\rightarrow\infty\,.
\end{equation*}
Then we conclude with the following main theorem.
\begin{theorem}\label{thm:main}
Assume Assumption~\ref{ass:a} holds.  For any solution of the stochastic Burgers' equation~(\ref{e:tx-Burgers}) with  $w(1,x)\in L^{2}(K)\cap L^{\infty}(\R)$, there is a unique self-similar solution~$\bar{w}(t,x)$ such that 
\begin{equation*}
\sqrt{t}w(t,x)-\sqrt{t}\bar{w}(t,x)\rightarrow 0 \quad \text{as } t\rightarrow\infty \quad \text{in distribution in } L^{2}(\R).
\end{equation*}
\end{theorem}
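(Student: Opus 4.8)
The plan is to deduce Theorem~\ref{thm:main} from the attraction of statistical solutions already assembled in the self-similar variables, and then to transcribe that statement back to the physical variables through the inverse similarity transformation; essentially all of the analytic work has been carried out and what remains is bookkeeping. Recall $\tau=\log t$, $\xi=x/\sqrt t$ and $u(\tau,\xi)=\sqrt t\,w(t,x)$. For the stationary solution $\bar u$ furnished by Theorem~\ref{thm:self-similar-sol-exist} I would set $\bar w(t,x)=t^{-1/2}\bar u(\log t, x/\sqrt t)$, which is a stochastically self-similar solution of~(\ref{e:tx-Burgers}) by the very definition of self-similarity, and I note that in the self-similar variable $\xi$ the rescaled fields are exactly $\sqrt t\,w(t,x)=u(\tau,\xi)$ and $\sqrt t\,\bar w(t,x)=\bar u(\tau,\xi)$.

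First I would quote the convergence established in the discussion immediately preceding the theorem: chaining Theorem~\ref{thm:converg} (diffusion approximation, $\mathbb{P}^{\e}_\tau\to\mathbb{P}_\tau$ weakly as $\e\to0$), Theorem~\ref{thm:global} (attraction of the $\rde$ stationary measures, $\mathbb{P}^{\e}_\tau\to\bar{\mathbb{P}}^{\e}$ as $\tau\to\infty$) and Corollary~\ref{cor:con} ($\bar{\mathbb{P}}^{\e_n}\to\bar{\mathbb{P}}$), the triangle inequality for the Prohorov metric gives $d_{\text{P}}(\mathbb{P}_\tau,\bar{\mathbb{P}})\to0$ as $\tau\to\infty$. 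In terms of the random fields this is precisely $\mathcal{D}(u(\tau))\to\mathcal{D}(\bar u)$ weakly in $L^2(K)$; since $\bar u$ is stationary the limit law does not depend on $\tau$, so the law of $\sqrt t\,w(t,\cdot)$ converges to the (self-similar) law of $\sqrt t\,\bar w(t,\cdot)$.

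Next I would transfer this weak convergence from the weighted space $L^2(K)$ to $L^2(\R)$. Because $K(\xi)=\exp\{\xi^2/4\nu\}\ge 1$ one has $\|f\|_{L^2(\R)}\le\|f\|_{L^2(K)}$, so the inclusion $L^2(K)\hookrightarrow L^2(\R)$ is a continuous map; pushing a weakly convergent family of laws forward under a continuous map preserves weak convergence, whence $\mathcal{D}(u(\tau))\to\mathcal{D}(\bar u)$ in $L^2(\R)$ as well. This is exactly the assertion that $\sqrt t\,w(t,x)-\sqrt t\,\bar w(t,x)\to0$ in distribution in $L^2(\R)$ as $t\to\infty$. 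For uniqueness I would use that the attraction $d_{\text{P}}(\mathbb{P}_\tau,\bar{\mathbb{P}})\to0$ holds for the solution started from \emph{any} datum with the same mass $M=\int_\R w(1,x)\,dx$: any competing self-similar solution of this mass corresponds to a stationary solution of~(\ref{e:tau-xi-Burgers}) whose law is constant in $\tau$ and must therefore coincide with the common limit $\bar{\mathbb{P}}$, so $\bar w$ is unique in law. Alternatively this follows from the contraction property of Lemma~\ref{lem:contraction}, passed to the limit $\e\to0$.

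The genuine difficulty of the whole program lies in the steps this proof merely cites — above all the diffusion approximation Theorem~\ref{thm:converg}, where the singular advection $\e^{-1/2}(u^\e\bar\eta^\e)_\xi$ is controlled by the perturbed-test-function and martingale machinery of Sections~\ref{sec:estimates}--\ref{sec:diffusionApp}. Within the proof of Theorem~\ref{thm:main} itself the only points requiring care are (i) that convergence of laws truly passes through the continuous but non-isometric embedding $L^2(K)\hookrightarrow L^2(\R)$, and (ii) reading the informal ``$\to0$ in distribution'' correctly as convergence of the law of the rescaled field to the stationary self-similar law rather than as a pathwise limit.
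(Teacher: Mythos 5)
Your proposal follows essentially the same route as the paper: the paper's proof of Theorem~\ref{thm:main} is precisely the discussion preceding it, which chains Theorem~\ref{thm:global} (attraction of the \rde{} stationary measures), Corollary~\ref{cor:con} (convergence of the stationary statistical solutions $\bar{\mathbb{P}}^{\e_n}\to\bar{\mathbb{P}}$) and Theorem~\ref{thm:converg} (diffusion approximation $\mathbb{P}^{\e}_{\tau}\to\mathbb{P}_{\tau}$) through the triangle inequality for the Prohorov metric to get $d_{\text{P}}(\mathbb{P}_{\tau},\bar{\mathbb{P}})\to0$, then reads the result back in physical variables. Your added remarks on pushing the weak convergence through the continuous embedding $L^{2}(K)\hookrightarrow L^{2}(\R)$ and on uniqueness via the common limit (or Lemma~\ref{lem:contraction}) are correct elaborations of points the paper leaves implicit.
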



\appendix

\section{Existence result for stochastic Burgers' type  equation}\label{apd:1}  
We prove Theorem~\ref{thm:wellpose},  Noticing  that the nonlinearity is non-Lipschitz,  for any $R>0$ we introduce the  following smooth cut-off function $\chi_{R}: [0, \infty)\rightarrow\R$ defined by 
\begin{equation*}
\chi_{R}(x)=
\begin{cases}
1\,,& 0\leq x\leq R\,,\\
0\,,& x\geq 2R\,.
\end{cases}
\end{equation*}
Consider the following system 
\begin{equation}\label{e:cut-sys}
du^{R}=[\mathcal{L}u^{R}-\chi_{R}(\|u^{R}\|_{H^{1}(K)})u^{R}u^{R}_{\xi}]\,d\tau+(u^{R}dW)_{\xi}\,,\quad u^{R}(0)=u_{0}\,.
\end{equation}
Denote by $B_{R}(u^{R}):=B_{R}(u^{R}, u^{R}):=\chi_{R}(\|u^{R}\|_{H^{1}(K)})u^{R}u^{R}_{\xi}$\,, then $B_{R}: H^{1}(K)\rightarrow L^{2}(K)$ is Lipschtiz in the following sense 
\begin{equation*}
\|B_{R}(u)-B_{R}(v)\|_{L^{2}(K)}\leq L_{R}\|u-v\|_{H^{1}(K)} 
\end{equation*}
for $u,v\in H^{1}(K)$.  Now for each $u_{0}\in L^{2}(K)$ define the nonlinear operator 
\begin{equation*}
\mathcal{T}(u^{R})(\tau)=S(\tau)u_{0}+\int_{0}^\tau S(\tau-s)B_{R}(u^{R}(s))\,ds+\int_{0}^{\tau}S(\tau-s)(u^{R}(s)dW(s))_{\xi}\,.
\end{equation*}
Then $\mathcal{T}$ maps $L^{2}(\Omega, C(0, T; L^{2}(K))\cap L^{2}(0, T; H^{1}(K)))$ to itself. 
For any $u^{R}\in L^{2}(\Omega, C(0, T; L^{2}(K))\cap L^{2}(0, T; H^{1}(K)))  $, by the  properties of the semigroup $S(\tau)$,
\begin{eqnarray*}
&&\|(\mathcal{T}u^{R})(\tau)\|_{L^{2}(K)}\\&\leq&  \|u_{0}\|_{L^{2}(K)}
+\int_{0}^{\tau} \|u^{R}u^{R}_{\xi}\|_{L^{2}(K)}\,ds+\left\|\int_{0}^{\tau}S(\tau-s)(u^{R}dW)_{\xi}\right\|_{L^{2}(K)}\\
&\leq&\|u_{0}\|_{L^{2}(K)}
+\int_{0}^{\tau} \|u^{R}\|_{L^{\infty}(\R)}\|u^{R}_{\xi}\|_{L^{2}(K)}\,ds
\\&&{}
+2\left\|\int_{0}^{\tau}S(\tau-s)(u^{R}dW)_{\xi}\right\|^{2}_{L^{2}(K)}+2\\
&\leq&\|u_{0}\|_{L^{2}(K)}
+\int_{0}^{\tau} \|u^{R}\|^{2}_{H^{1}(K)}\,ds+2\left\|\int_{0}^{\tau}S(\tau-s)(u^{R}dW)_{\xi}\right\|^{2}_{L^{2}(K)}+2
\end{eqnarray*} 
For the third term,  by  Assumption~\ref{ass:a} and the properties of the stochastic integral~\cite[Lemma 7.2]{Prato},  for some constant $C>0$
\begin{eqnarray*}
&&\mathbb{E}\max_{0\leq \tau\leq T}\left\|\int_{0}^{\tau}S(\tau-s)(u^{R}dW)_{\xi} \right\|^{2}_{L^{2}(K)}\\&\leq&  2\mathbb{E}\max_{0\leq \tau\leq T}\left\|\int_{0}^{\tau}S(\tau-s)u^{R}_{\xi}dW \right\|^{2}_{L^{2}(K)}+2\mathbb{E}\max_{0\leq \tau\leq T}\left\|\int_{0}^{\tau}S(\tau-s)u^{R}dW_{\xi} \right\|^{2}_{L^{2}(K)}\\
&\leq& C\int_{0}^{T}\|u^{R}(s)\|^{2}_{H^{1}(K)}\,ds+C\int_{0}^{T}\|u^{R}(s)\|^{2}_{L^{2}(K)}\,ds\,.
\end{eqnarray*}
Then for some constant $C>0$
\begin{equation*}
\mathbb{E}\|\mathcal{T}u^{R}\|^{2}_{C(0, T; L^{2}(K))}\leq \|u_{0}\|^{2}_{L^{2}(K)}+C\int_{0}^{T}\|u^{R}(s)\|^{2}_{H^{1}(K)}\,ds\,.
\end{equation*}
On the other hand 
\begin{eqnarray*}
&&\|(\mathcal{T}u^{R})(\tau)\|_{H^{1}(K)}\\&\leq& \|S(\tau) u_{0}\|_{H^{1}(K)}+\left\|\int_{0}^{\tau}S(\tau-s)\chi_{R}(\|u^{R}\|_{H^{1}(K)})u^{R}u^{R}_{\xi}\,ds\right\|_{H^{1}(K)}\\&&{}+\left\|\int_{0}^{\tau}S(\tau-s)(u^{R}dW)_{\xi}\right\|_{H^{1}(K)}\,.
\end{eqnarray*}
By the properties of the semigroup $S(\tau)$, for $T>0$
\begin{equation*}
\int_{0}^{T}\|S(\tau)u_{0}\|^{2}_{H^{1}(K)}\,d\tau\leq \frac{1}{2}\|u_{0}\|^{2}_{L^{2}(K)}\,,
\end{equation*}
and for some constant $C_{R}>0$
\begin{eqnarray*}
&&\int_{0}^{T}\left\|\int_{0}^{\tau}S(\tau-s)\chi_{R}(\|u^{R}\|_{H^{1}(K)})u^{R}u^{R}_{\xi}\,ds\right\|^{2}_{H^{1}(K)}\,d\tau\\
&\leq& C_{R}\int_{0}^{T}\|u^{R}(\tau)\|^{2}_{H^{1}(K)}\,d\tau\,.
\end{eqnarray*}
 For the stochastic term, first we have  
\begin{eqnarray*}
&& \left\|\int_{0}^{\tau}S(\tau-s)(udW)_{\xi}\right\|_{H^{1}(K)}\\&\leq &
 \left\|\int_{0}^{\tau}S(\tau-s)u_{\xi}dW\right\|_{H^{1}(K)}
 + \left\|\int_{0}^{\tau}S(\tau-s)u(dW)_{\xi}\right\|_{H^{1}(K)}\,. 
\end{eqnarray*}
By the properties of~$S(\tau)$, and direct calculation by the expanding $u_{\xi}^{R}$ and~$W$ in terms of the orthonormal basis, for some constant $C>0$
\begin{equation*}
\mathbb{E}\int_{0}^{T}\left\| \int_{0}^{\tau} S(\tau-s)u^{R}_{\xi}dW(s)\right\|^{2}_{H^{1}(K)}\,d\tau\leq C\int_{0}^{T}\|u^{R}\|^{2}_{H^{1}(K)}\,d\tau\,.
\end{equation*}
Similarly 
\begin{equation*}
\mathbb{E}\int_{0}^{T}\left\| \int_{0}^{\tau} S(\tau-s)u^{R}dW_{\xi}(s)\right\|^{2}_{H^{1}(K)}\,d\tau\leq C\int_{0}^{T}\|u^{R}\|^{2}_{H^{1}(K)}\,d\tau\,.
\end{equation*}
Then for some constants $C>0$ and $C_{R}>0$\,, such that 
\begin{equation*}
\mathbb{E}\int_{0}^{T}\|(\mathcal{T}u^{R})(\tau)\|^{2}_{H^{1}(K)}\,d\tau\leq C\|u_{0}\|^{2}_{L^{2}(K)}+C_{R}\int_{0}^{T}\|u^{R}(\tau)\|^{2}_{H^{1}(K)}\,d\tau\,.
\end{equation*}
 That is, $\mathcal{T}$ maps $L^{2}(\Omega, C(0, T; L^{2}(K))\cap L^{2}(0, T; H^{1}(K)))$ to itself.  Then by the classical method for  the wellposedness of \spde{}s~\cite{Prato} a  unique solution~$u^{R}$ exists for the cut-off system~(\ref{e:cut-sys}) in $L^{2}(\Omega, C(0, T; L^{2}(K))\cap L^{2}(0, T; H^{1}(K)))$. Now define an increasing sequence of stopping times~$\{\tau_{n}\}$:
\begin{equation*}
\tau_{R}:=\inf\{\tau>0: \|u^{R}(\tau)\|_{H^{1}(K)}\geq R \} 
\end{equation*}
when it exits, and $\tau_{R}=\infty$ otherwise. Let $\tau_{\infty}=\lim_{R\rightarrow\infty}\tau_{R}$\,, a.s., and set $u^{\tau_{R}}(\tau)=u^{R}(\tau \wedge\tau_{R})$. 
Then $u^{\tau_{R}}$ is a local solution of equation~(\ref{e:tau-xi-Burgers}) for $0\leq \tau\leq \tau_{R}$\,.
To show the limit $R\rightarrow\infty$ of $u^{\tau_{R}}$ we need an estimate on~$\|u\|_{H^{1}(K)}$.   Notice  the functional~$\|\cdot\|^{2}_{H^{1}(K)}$ is continuous and  differentiable on~$H^{1}(K)$.  Applying It\^o's formula~\cite{Chow} to $\|u_{s}(\tau)\|^{2}_{H^{1}(K)}$  yields 
\begin{eqnarray*}
&&\frac12\frac{d}{dt}\|u_{s}\|^{2}_{H^{1}(K)}\\&=&\langle \mathcal{L} u_{s}, -\mathcal{L}u_{s} \rangle-\langle uu_{\xi}, \mathcal{L}u \rangle+\langle (u\dot{W})_{\xi}, \mathcal{L}u \rangle+\frac12\left[\|\mathcal{L}^{1/2}u_{\xi}\|^{2}_{\mathcal{L}_{2}^{Q}}+\|\mathcal{L}^{1/2}u\|^{2}_{\mathcal{L}_{2}^{Q'}} \right].
\end{eqnarray*}
By the Cauchy inequality, for some small $\varepsilon>0$\,, there is $C_{\varepsilon}>0$ such that 
\begin{equation*}
\langle uu_{\xi}, \mathcal{L}u  \rangle\leq \varepsilon \|u\|^{2}_{H^{2}(K)}+C_{\varepsilon}\|u\|^{2}_{H^{1}(K)}\,.
\end{equation*}
Then, by the assumption~(\ref{e:q-bound}) on $q$ and~$q'$,  
\begin{eqnarray*}
&&\frac12\frac{d}{dt}\mathbb{E}\|u_{s}\|^{2}_{H^{1}(K)}\\&\leq& -c_{\varepsilon}\mathbb{E}\|u_{s}\|^{2}_{H^{2}(K)}+c_{\varepsilon}\mathbb{E}\|u_{c}\|^{2}_{H^{2}(K)}+C'_{\varepsilon}\mathbb{E}\|u_{s}\|^{2}_{H^{1}(K)}+C'_{\varepsilon}\mathbb{E}\|u_{c}\|^{2}_{H^{1}(K)}
\end{eqnarray*}
for some constants $c_{\varepsilon}>0$ and $C_{\varepsilon}>0$\,. Then by Gronwall lemma, 
\begin{equation*}
\mathbb{E}\|u(\tau)\|^{2}_{H^{1}(K)}\leq K(\tau)
\end{equation*}
for some function $K: (0, \infty)\rightarrow [0, +\infty)$.

Now we show $u(\tau):=\lim_{R\rightarrow \infty}u^{R}$ is the global solution to~(\ref{e:tau-xi-Burgers}). By the above estimates, for $\tau\in [0, T\wedge\tau_{R}]$
\begin{equation*}
\mathbb{E}\|u^{\tau_{R}}(\tau)\|_{H^{1}(K)}\leq K(\tau)\,.
\end{equation*}
Also 
\begin{eqnarray*}
\mathbb{E}\|u^{\tau_{R}}(T)\|^{2}_{H^{1}(K)}&=&\mathbb{E}\|u^{R}(T\wedge\tau_{R})\|_{H^{1}(K)}^{2}\\
&\geq&\mathbb{E}\left\{ 1_{\{\tau_{R}\leq T\}}\|u^{R}(T\wedge\tau_{R})\|_{H^{1}(K)}^{2} \right\}\\
&\geq&\mathbb{P}\{\tau_{R}\leq T\}R^{2}\,.
\end{eqnarray*}
Then 
\begin{equation*}
\mathbb{P}\{\tau_{R}\leq T\}\leq \frac{K(T)}{R^{2}}
\end{equation*}
which, by the Borel--Cantelli Lemma, implies 
\begin{equation*}
\mathbb{P}\left\{ \tau_{\infty}>T \right\}=1\,.
\end{equation*}
Then we have the global well-posedness of the equation~(\ref{e:tau-xi-Burgers}). 
By the property of~$S(\tau)$, this mild solution is also a weak solution~\cite{Prato},


\section{Statistical  solution}\label{apd:2}

We introduce the statistical  solution of the  random  Burgers equation~(\ref{e:rBurgers}) coupled with~$\eta^{\e}$. 
The statistical solution was introduced to study universal properties of turbulent flows~\cite[e.g.]{Foi72, Foi73, VF88},

The system of random equations~(\ref{e:rBurgers2}) coupled with~(\ref{e:eta})  is said to have a statistical solution in the space~$C([0, \infty); L^{2}(K))$ 
if there is a probability measure~$\mathfrak{P}^{\e}$ supported on $C([0, \infty); L^{2}(K)\times H^{1}(K))$, and processes $(\hat{u}^{\e}, \hat{\eta}^{\e})\in C([0, \infty); L^{2}(K))$, $\hat{W}$~defined on a new probability space, such that 
\begin{enumerate}
  \item  $\mathcal{D}(\hat{u}^{\e},   \hat{\eta}^{\e})=\mathfrak{P}^{\e}$\,;
  \item $\hat{W}$ is  Wiener processes  distributed the same as~$W$;
 \item $\mathcal{D}(\hat{u}^{\e}(0))=\mathcal{D}(u_{0}^{\e})$\,, $\mathcal{D}(\hat{\eta}^{\e})=\mathcal{D}(\eta^{\e} )$ and $\hat{u}^{\e}(0) $ are independent  from~$\hat{W}$;
 \item the process~$\hat{u}^{\e} $ is a weak solution of~(\ref{e:rBurgers2}) with $\eta^{\e}$~replaced by~$\hat{\eta}^{\e}$.  Here $\hat{\eta}^{\e} $ is a stationary process solving~(\ref{e:eta}) with $W$ replaced by~$\hat{W}$.
  \end{enumerate}

A stationary statistical solution is a statistical solution, a Borel measure $\bar{\mathfrak{P}}^{\e}$\,, which is invariant under the following translation on $C([0, \infty); L^{2}(K)\times H^{1}(K))$
\begin{equation*}
 (u(\cdot), \eta(\cdot))\mapsto  (u(\cdot+\tau),  \eta(\cdot+\tau))\,, \quad \tau\geq 0
\end{equation*}
for $(u, \eta)\in C([0, \infty); L^{2}(K)\times H^{1}(K)) $\,. For a statistical solution of the system of the random equation~(\ref{e:rBurgers2}) coupled with~(\ref{e:eta}), we denote by
$\mathfrak{P}^{\e}_{\tau}=\mathcal{D}(\hat{u}^{\e}(\cdot+\tau), \hat{\eta}^{\e}(\cdot+\tau))$\,, which is also a statistical solution of
the random  equation~(\ref{e:rBurgers2}) coupled with~(\ref{e:eta}). For a stationary
statistical solution~$\bar{\mathfrak{P}}^{\e}$,
\begin{equation*}
\bar{\mathfrak{P}}^{\e}_{\tau}=\bar{\mathfrak{P}}^{\e}\,, \quad \tau\geq 0.
\end{equation*}

The following result establishes a relation between the stationary measure and stationary statistical solution.
\begin{lemma}\label{lem:A1}
If the random   equation~(\ref{e:rBurgers2}) coupled with~(\ref{e:eta}) has a  stationary measure supported on $L^{2}(K)\times H^{1}(K)$, then there is a
 stationary  statistical solution in $C([0, \infty);
L^{2}(K)\times H^{1}(K))$\,.
\end{lemma}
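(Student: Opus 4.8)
The plan is to mirror the proof of Lemma~\ref{lem:stat-measure}, the single-equation analogue taken from \cite[Proposition~11.5]{Prato}, now applied to the coupled pair $(u^{\e},\eta^{\e})$ on the product space $L^{2}(K)\times H^{1}(K)$. As already observed before the construction of $\bar{\mathfrak{y}}^{\e}$, it is this pair, rather than $u^{\e}$ alone, that is a time-homogeneous Markov process; denote its transition semigroup by $\{\mathcal{P}^{\e}_{\tau}\}_{\tau\geq 0}$ and its dual by $\{\mathcal{P}^{*\e}_{\tau}\}$. The stationary measure $\bar{\mathfrak{y}}^{\e}$ hypothesised in the statement then satisfies $\mathcal{P}^{*\e}_{\tau}\bar{\mathfrak{y}}^{\e}=\bar{\mathfrak{y}}^{\e}$ for all $\tau\geq 0$, and the idea is to launch the dynamics from this invariant distribution and read off the induced law on path space.

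First I would fix a new probability space carrying a Wiener process $\hat{W}$ distributed as $W$, together with an $\mathcal{F}_{0}$-measurable pair $(\hat{u}^{\e}(0),\hat{\eta}^{\e}(0))$ independent of $\hat{W}$ whose joint law is $\bar{\mathfrak{y}}^{\e}$. Since $\bar{\mathfrak{y}}^{\e}=\bar{\mu}^{\e}*\bar{\nu}$ is supported on $L^{2}(K)\cap L^{\infty}(\R)$ in the first factor and on the Ornstein--Uhlenbeck stationary law $\bar{\nu}$ in the second, solving~(\ref{e:rBurgers2}) coupled with~(\ref{e:eta}) driven by $\hat{W}$ from these data---using the well-posedness and regularity established earlier---produces a process $(\hat{u}^{\e},\hat{\eta}^{\e})$ with sample paths in $C([0,\infty);L^{2}(K)\times H^{1}(K))$, in which $\hat{\eta}^{\e}$ is the stationary process solving~(\ref{e:eta}) with $W$ replaced by $\hat{W}$ and $\hat{u}^{\e}$ is the corresponding weak solution. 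Setting $\bar{\mathfrak{P}}^{\e}=\mathcal{D}(\hat{u}^{\e},\hat{\eta}^{\e})$ yields a Borel probability measure on this path space, and the four defining properties of a statistical solution are then immediate from the construction: the initial marginals match by design and $\hat{u}^{\e}(0)$ is independent of $\hat{W}$.

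It remains to verify the shift-invariance $\bar{\mathfrak{P}}^{\e}_{\tau}=\bar{\mathfrak{P}}^{\e}$. Here I would exploit the time-homogeneous Markov property: because the initial law is fixed by $\mathcal{P}^{*\e}_{\tau}$, the one-time marginal of $(\hat{u}^{\e}(s),\hat{\eta}^{\e}(s))$ equals $\bar{\mathfrak{y}}^{\e}$ for every $s\geq 0$, and the Chapman--Kolmogorov relation propagates this to all finite-dimensional distributions, giving
\begin{equation*}
\mathcal{D}\big((\hat{u}^{\e},\hat{\eta}^{\e})(\tau_{1}+\tau),\ldots,(\hat{u}^{\e},\hat{\eta}^{\e})(\tau_{k}+\tau)\big)
=\mathcal{D}\big((\hat{u}^{\e},\hat{\eta}^{\e})(\tau_{1}),\ldots,(\hat{u}^{\e},\hat{\eta}^{\e})(\tau_{k})\big)
\end{equation*}
for all $0\leq\tau_{1}<\cdots<\tau_{k}$ and all $\tau\geq 0$. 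Since $L^{2}(K)\times H^{1}(K)$ is a separable Hilbert space and the process has continuous paths, its law on $C([0,\infty);L^{2}(K)\times H^{1}(K))$ is determined by these finite-dimensional distributions; the displayed equality therefore lifts to $\bar{\mathfrak{P}}^{\e}_{\tau}=\bar{\mathfrak{P}}^{\e}$, which is exactly the required stationarity.

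The main obstacle I anticipate is the measure-theoretic lifting in this last step, which rests on two points that must be made precise: that the coupled pair genuinely defines a continuous, time-homogeneous Markov semigroup on the product space (the reason $u^{\e}$ alone will not do), and that invariance of every finite-dimensional distribution under the shift forces invariance of the full cylinder-generated path-space law. A secondary care is that launching from a random, merely $H^{1}(K)$-valued initial condition still delivers the stated path regularity and that the Ornstein--Uhlenbeck component retains its stationary law $\bar{\nu}$ for all $\tau$; both follow from the a priori estimates and the construction of $\bar{\eta}^{\e}$ in the preceding sections.
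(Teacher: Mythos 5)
Your proposal is correct and follows essentially the same route as the paper: the paper's proof simply invokes (citing Chueshov--Kuksin) the standard correspondence that launching the coupled Markov process $(u^{\e},\eta^{\e})$ from an invariant measure $\bar{\mathfrak{y}}^{\e}$ yields a shift-invariant law on path space, which is precisely the construction you carry out. Your additional detail on propagating invariance through the finite-dimensional distributions via the Markov property is just an explicit spelling-out of the step the paper leaves to the cited reference.
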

\begin{proof}
The proof is direct by the following observation~\cite{CS08}: Let $\bar{\mathfrak{P}}^{\e}=\mathcal{D}(\bar{\hat{u}}^{\e},  \bar{\hat{\eta}}^{\e})$ be a stationary statistical solution to the random equation~(\ref{e:rBurgers2}) coupled with~(\ref{e:eta}), then 
\begin{equation*}
\bar{\mathfrak{y}}^{\e}=\mathcal{D}(\bar{\hat{u}}^{\e}(0),   \bar{\hat{\eta}}^{\e}(0))
\end{equation*}
  is a stationary measure for the Markov process  defined by the random equation~(\ref{e:rBurgers2}) coupled with~(\ref{e:eta}); Conversely, assume $\bar{\mathfrak{y}}^{\e}$ is a stationary measure of the random equation~(\ref{e:rBurgers2}) coupled with~(\ref{e:eta}),  let $(\bar{u}^{\e},  \bar{\eta}^{\e})$ be a solution of the random equation~(\ref{e:rBurgers2}) coupled with~(\ref{e:eta})  with $\mathcal{D}(\bar{u}^{\e}(0),  \bar{\eta}^{\e}(0))=\bar{\mathfrak{y}}^{\e}$\,, then $\bar{\mathfrak{P}}^{\e}=\mathcal{D}(\bar{u}^{\e},  \bar{\eta}^{\e})$ is a stationary statistical solution of the random equation~(\ref{e:rBurgers2}) coupled with~(\ref{e:eta}).
\end{proof}

For the stochastic  Burgers type  equation~(\ref{e:tau-xi-Burgers}), a  statistical
solution in the space~$C([0, \infty; L^{2}(K))$ is a
probability measure~$\mathbb{P}$
supported on~$C([0, \infty; L^{2}(K))$  and there
are  processes $(\hat{u})\in C([0, \infty; L^{2}(K))$ and~$\hat{W} $ defined on a
new probability space such that
\begin{enumerate}
  \item[i]  $\mathcal{D}(\hat{u})=\mathbb{P}$\,;
  \item[ii] $\hat{W}$ is  Wiener processes  distribute same as~$W$;
 \item[iii] $\mathcal{D}(\hat{u}(0))=\mathcal{D}(u_{0})$ and $\hat{u}(0) $ are independent  from~$\hat{W}$;
 \item[iv] the process $\hat{u}$ is a weak solution of stochastic Burgers type  equation~(\ref{e:tau-xi-Burgers}) with $W$~replaced by~$\hat{W}$.
  \end{enumerate}
Notice that the above definition of statistical solution is a solution to a martingale problem~\cite[Chapter V]{MM88}.


\paragraph{Acknowledgements}
This research was supported by the
Australian Research Council grants DP0988738 and NSFC No. 11371190. Part of the work was done while Wei Wang was working at University of Adelaide.



\end{document}